\newtheorem{theorem}{Theorem}[section]
\newtheorem{lemma}[theorem]{Lemma}
\newtheorem{corollary}[theorem]{Corollary}
\newtheorem{proposition}[theorem]{Proposition}
\theoremstyle{definition}
\newtheorem{example}[theorem]{Example}
\newtheorem{remark}[theorem]{Remark}
\newtheorem{definition}[theorem]{Definition}
\newcommand{\art}[6]{{\sc #1, \rm #2, \it #3 \bf #4 \rm (#5), \mbox{#6}.}}
\newcommand{\artin}[3]{{\sc #1, \rm #2,  in #3.}}
\newcommand{\book}[3]{{\sc #1, \it #2, \rm #3.}}
\newcommand{\AND}{{\rm and }}
\def\R{{\mathbb R}}
\def\N{\mathbb{N}}
\def\Q{\mathbb{Q}}
\def\-{\setminus}
\def\P{\mathcal{P}}
\def\S{\mathcal{S}}
\def\L{\mathcal{L}}
\def\M{\mathcal{M}}
\def\MM{\mathbb{M}}
\def\X{\mathcal{X}}
\def\RR{\mathcal{R}}
\def\WR{\widetilde{\mathcal{R}}}
\def\K{\mathcal{K}}
\def\S{\mathcal{S}}
\def\min{{\rm min}}
\def\supp{{\rm supp}}
\def\max{{\rm max}}
\def\vae{\varepsilon}
\def\vae{\varepsilon}
\begin{document}
\setcounter{page}{1}
\title[Sobolev Spaces]{A new approach to Sobolev spaces in metric measure spaces}
\author[T. Sj\"{o}din]{Tomas Sj\"{o}din}
\address{
Department of Mathematics\newline
\indent Link\"o{}ping University\newline
\indent 581 83, Link\"o{}ping, Sweden.}
\email{tomas.sjodin@liu.se}
\keywords{Sobolev Space, Metric Measure Space, Mass Transport}

\begin{abstract}
Let $(X,d_X,\mu)$ be a metric measure space where $X$ is locally compact and separable and $\mu$ is a Borel regular measure such that $0 <\mu(B(x,r)) <\infty$ for every ball $B(x,r)$ with center $x \in X$ and radius $r>0$. We define $\X$ to be the set of all positive, finite non-zero regular Borel measures with compact support in $X$ which are dominated by $\mu$, and $\M=\X \cup\{0\}$. By introducing a kind of mass transport metric $d_{\M}$ on this set we provide a new approach to first order Sobolev spaces on metric measure spaces, first by introducing such for functions $F : \X \rightarrow \R$, and then for functions $f:X \rightarrow [-\infty,\infty]$ by identifying them with the unique element $F_f : \X \rightarrow \R$ defined by the mean-value integral:
$$F_f(\eta)= \frac{1}{\|\eta\|}\int f \,d\eta.$$
In the final section we prove that the approach gives us the classical Sobolev spaces when we are working in open subsets of Euclidean space $\R^n$ with Lebesgue measure.
\end{abstract}

\maketitle

\renewcommand{\thefootnote}{\fnsymbol{footnote}} 
\footnotetext{\emph{Mathematics Subject Classification (2010):}  Primary: 46E35; Secondary: 30L99, 31E05.}
\renewcommand{\thefootnote}{\arabic{footnote}} 
\section{Introduction}
Suppose $(X,d_X,\mu)$ is a metric measure space and $1 \leq p <\infty$. If we want to introduce a first order Sobolev-type space, analogous to the classical Sobolev spaces $H^{1,p}(X)$ when $X$ is an open subset of $\R^n$, $d_X$ the Euclidean distance and $\mu$ the Lebesgue measure, then there is by now a few approaches available, most notably that based on upper gradients, which were introduced by  Heinonen and Koskela~\cite{HeKo98}, such as first studied by Shanmugalingam in \cite{Sh-rev}. By now there are (at-least) two good books which treat this approach in detail, first \cite{AJB} by  Bj\"o{}rn and Bj\"o{}rn and very recently \cite{HKSTbook} by Heinonen,  Koskela,  Shanmugalingam and Tyson.

Apart from the Newtonian spaces there are alternative definitions of Sobolev spaces on metric measure spaces worth mentioning. Early approaches are due to  Haj\l asz in \cite{HAJ} and Korevaar-Schoen (a version directly comparable to this article of the latter approach seems first to have been developed in \cite{Ko-Ma}). Other approaches can be found in \cite{CHE} by Cheeger and \cite{SHV} by Shvartsman. There has also been some axiomatic treatments (see e.g. \cite{GT1,T1}). The survey articles \cite{Haj03, HEI} are also worth mentioning as well as the book \cite{HeKiMa} which treats weighted Sobolev spaces on $\R^n$.

The idea of upper gradients is based on the well-known formula
\begin{equation}\label{newt1}
|u(\gamma(s))-u(\gamma(0))| \leq \int_0^s g(\gamma(t))\, dt
\end{equation}
which holds for every smooth function in $\R^n$ and every rectifiable curve $\gamma$ parametrized by arc-length, in case we put $g = |\nabla u|$. In a metric space we do not have a direct substitute for $\nabla u$, but one then says that a Borel measurable function $g$ is an upper gradient of $u$ if the above formula holds for all curves. In case $g \in L^p(X)$ one says that $g$ is a $p$-upper gradient of $u$. If $u \in L^p(X)$ has an upper gradient which also belongs to $L^{p}(X)$, then one says that $u$ belongs to the {\bf Newtonian space} $N^{1,p}(X)$, and give it the norm
$$\|u\|_{N^{1,p}(X)} = \left( \int |u|^p \, d\mu + \inf_g \int g^p \, d\mu\right)^{1/p},$$
where the infimum is taken over all upper gradients $g$ of $u$.

For many questions it is desirable to have a minimal upper gradient $\tilde{g}_u$ of $u$ such that the above infimum is attained. As it turns out however such a minimal upper gradient does not always exist, and we are forced to introduce the somewhat technical concept of curve modulus to introduce weak upper gradients which satisfies inequality (\ref{newt1}) for ``almost every'' curve, which is given a precise meaning thorough the concept of curve modulus. It turns out that there is a unique, as an element in $L^p$, $p$-weak upper gradient $\tilde{g}_u$ of $u$, if $u$ has an upper gradient in $L^p$.

The aim of this paper is to look at an alternative approach. We do not know in general how these spaces are related to the Newtonian ones, but at the very least we do indeed get the classical Sobolev spaces  in case $X$ is an open subset of $\R^n$ with Lebesgue measure (which in turn are equivalent to the Newtonian spaces in this setting). 

To outline the approach assume that $(X,d_X,\mu)$ is a metric measure space, where $X$ is separable and locally compact, and $0< \mu(B)<\infty$ for every ball $B \subset X$. 
Let $\M$ denote the set of all (non-negative Radon) measures on $X$ which are dominated by $\mu$ and have compact support, and let $\X = \M \setminus \{0\}$.
In Section \ref{METRIC} we introduce a metric $d_{\M}$ on the set $\M$, and we give $\X$ the induced metric.
The idea is to first look at real-valued functions $F$ on $\X$, and to relate functions $f$ on $X$ to such by the mean-value integral as follows. If $\eta \in \X$ and $f$ is a locally integrable function on $X$, then we define
$$F_f(\eta) = \frac{1}{\|\eta\|} \int f \, d \eta,$$
where $\|\eta\|= \int \, d\eta$ is the total mass of $\eta$.  
It is worthwhile to remark that if $f$ is a locally integrable function on $X$, then point values are not really well defined (in the sense that we may have several representatives which are equal almost everywhere), but the value of $F_f$ on elements in $\X$ is always well defined and finite. So the elements of $\X$ have a similar role to test functions. This is perhaps the major motivation for this type of approach. In some sense $L^p$-functions are more natural to think of as certain type of functions on $\X$ rather than $X$, and hence it seems natural to see to what extent one can carry the calculus to this set in a natural way.

In Section \ref{LP} we introduce a norm $\|\cdot\|_{\L^p(\X)}$ on the set of extended real-valued functions
on $\X$, and we let $\L^p(\X)$ denote the set of such functions for which this expression is finite. In case $f \in L^p(X)$, then $\|F_f\|_{\L^p(\X)} = \|f\|_{L^p(X)}$.
It is also worthwhile to remark that the definition of the norm $\|\cdot\|_{\L^p(\X)}$ does not depend on the metric $d_{\M}$.

In Section \ref{upgrad} we introduce upper gradients $r_F: \X \rightarrow [0,\infty]$ for real-valued functions  $F:\X \rightarrow \R$. This definition is a pointwise (in $\X$) local definition, and this definition does not depend on an integrability exponent, unlike the definition of minimal $p$-weak upper gradients (it seems however to be an open question to what extent $\tilde{g}_u$ actually depends on the exponent $p$ in general). As it turns out, in case $F$ has a representative of the form $F_f$ for some function $f \in L^1_{\rm loc}(X)$ then also $r_F$ has a representative of the form $F_{g_f}$ for a function $g_f \in L^1_{\rm loc}(X)$.
In particular these upper gradients $g_f$ satisfies the strong locality property (see Theorem \ref{lattice1}). This is also true for the minimal weak upper gradients in the Newtonian theory, but the corresponding result does not hold in the approaches by Haj\l asz or Korevaar-Schoen for instance.

Then we introduce the Sobolev-type spaces and norms $\S^{1,p}(\X)$ and $\|\cdot\|_{\S^{1,p}(\X)}$ respectively in Section \ref{SOBOLEV}. Then, in Section \ref{Sobolev2}, we also introduce the space $S^{1,p}(X)$ as those functions $f \in L^p(X)$  such that $F_f$ belongs to $\S^{1,p}(\X)$. These will be our analogues of Sobolev spaces on $X$.

In Section \ref{RN} we prove that in case $X$ is an open subset of $\R^n$, $d_{X}$ is the usual Euclidean metric and $\mu$ is the Lebesgue measure, then the classical Sobolev space $H^{1,p}(X)$ and the space $S^{1,p}(X)$ coincides, and the norms are the same. Indeed we have $g_f=|\nabla f|$ for such functions.

We end the article with some final remarks about the particular choices made in the article and also mention questions for future research.

It is also worthwhile already here to point out that the development of the theory over $\X$ depends only on some basic properties of rectifiable curves in that space, and not directly of the underlying space $X$, and even the spaces $S^{1,p}(X)$ has an analogue $S^{1,p}(\X)$ defined on $\X$ in a way that need not make reference to $X$ either. Although the above is not emphasised in this article, these facts opens up the possibility to develop a theory which is point-free such as in pointless topology for instance. 

\section*{Acknowledgements} The author wishes to thank Professors Anders and Jana Bj\"o{}rn for valuable discussions and suggestions.

\section{List of notation}
\noindent{\bf Some special sets}
\begin{itemize}
\item $\R$: the set of real (finite) numbers,
\item $\Q$: the set of rational numbers,
\item $\N$: the set of natural numbers $\{1,2,3,\ldots\}$,
\item $\mathbb{Z}$: the set of integers,
\end{itemize} 
\noindent{\bf Some lattice notation}
\begin{itemize}
\item If $a,b \in [-\infty,\infty]$, then $a \wedge b = \min\{a,b\}$ and $a \vee b = \max\{a,b\}$,
\item If $f,g$ are extended real-valued functions, then $f \wedge g$ and $f \vee g$ denotes their pointwise minimum and maximum respectively.
\end{itemize}
\noindent{\bf Some notation related to general metric spaces}\smallskip\newline
Below we let $(Y,d_Y)$ be a metric space (i.e. $Y$ is a set and $d_Y$ is a metric on $Y$).
\begin{itemize}
\item $B(y,r)$: ball with center $y$ and radius $r$, 
\item $\widetilde{\RR}(Y)$ :  rectifiable paths $\gamma : [0,b_{\gamma}] \rightarrow Y$  subparametrized by arc-length (i.e. $\gamma$ is $1$-Lipschitz),
\item $\RR(Y)$ :  rectifiable paths parametrized by arc-length,
\item $l_{\gamma}$  : length of a curve $\gamma$,
\item  $\breve{f}$ : upper semicontinuous regularization of the function $f$ along curves,
\item If $A \subset Y$ and $\vae \geq 0$ then $A_{\vae} = \{y \in Y : {\rm dist}(y,A) \leq \vae\}$. 
\end{itemize}
\noindent {\bf Some notation related to metric measure spaces}\smallskip\newline
Below $(X,d_X,\mu)$ will always denote a metric measure space. More precisely, $(X,d_X)$ is a metric space, and $\mu$ is assumed to be a Borel regular measure such that $0< \mu(B(x,r)) < \infty$ for every ball $B(x,r) \subset X$. The space $X$ is furthermore assumed to be locally compact and separable.
\begin{itemize}
\item $L^p(X)$: $p$-th power integrable functions on $X$ with respect to $\mu$, where $p \in [1,\infty]$,
\item $L^p_{\rm loc}(X)$: local $L^p$-spaces on $X$. 
\end{itemize}
\noindent{\bf  Notation related to $(\M,d_{\M})$}
\begin{itemize}
\item $\P$ : all non-negative finite Borel measures with compact support in $X$,
\item If $\eta \in \P$ then $\|\eta\| = \int\, d\eta$ denotes the total mass of $\eta$,
\item $\M =\{\nu \in \P: 0 \leq \nu \leq \mu\}$,
\item $\X = \M \setminus \{0\}$,
\item If $f \in L^1_{\rm loc}(X)$ then $F_f : \X \rightarrow \R$ is defined by 
$$F_f(\eta) = \frac{1}{\|\eta\|} \int f \,d\eta,$$
\item If $\eta,\nu \in \M$ then 
$$\eta \wedge \nu = \min\left\{\frac{d \eta}{d\mu}, \frac{d \nu}{d\mu}\right\} \mu, \quad \eta \vee \nu = \max\left\{\frac{d \eta}{d\mu}, \frac{d \nu}{d\mu}\right\} \mu.$$
\end{itemize}
Let $h: [0,\infty) \rightarrow [0,\infty)$ be a strictly increasing continuous function such that $h(0)=\lim_{\vae \rightarrow 0^+} \frac{h(\vae)}{\vae} =0,$ 
$h(\vae_1) + h(\vae_2) \leq h(\vae_1 + \vae_2) \textrm{ for all } \vae_1,\vae_2 \in [0,\infty)$
and $\lim_{\vae \rightarrow \infty} h(\vae)=\infty$.

If $\nu,\eta \in \M$ and $\vae,\delta >0$ then
\begin{align*}
& \Gamma_{\vae,\delta}(\nu,\eta) = \left\{ (\nu_i,\eta_i)_{i \in \MM}: \begin{array}{l} 
\MM \textrm{ is at most countable},\\
\nu_i,\eta_i \in \M \textrm{ for each } i \in \MM\\
\nu = \sum_{i \in \MM} \nu_i,\quad \eta = \sum_{i \in \MM}\eta_i,\\
\sum_{i\in \MM}  \bigl|\|\nu_i\|-\|\eta_i\|\bigr| \leq \delta,\\
\textrm{diam}(\supp(\nu_i) \cup \supp(\eta_i)) \leq \vae. \end{array}\right\},\\
& \Gamma_{\vae}(\nu,\eta)=\Gamma_{\vae,h(\vae)}(\nu,\eta),\\
&d_{\M}(\nu,\eta) := \inf \{\vae: \Gamma_{\vae}(\nu,\eta) \ne \emptyset\}.
\end{align*}

Rectifiable curves in $\WR(\M)$ are maps $\eta : [0,b_{\eta}] \rightarrow \M$, so for every $s \in [0,b_{\eta}]$ $\eta(s)$ is a measure in $\M$, and it turns out that every such curve satisfies that ${\bigcup_{s \in [0,b_{\eta}]} {\rm supp} (\eta(s))}$ is compact, and that $\|\eta(s)\|$ is constant.\smallskip\newline
\noindent{\bf  Notation related to $\L^p(\X)$}\smallskip\newline
For a fixed $p \in [1,\infty)$ and a function $F : \X \rightarrow [-\infty,\infty]$ we introduce the norm
\begin{equation*}
\|F\|_{\L^p(\X)} = \sup\left\{\left(\sum_{i=1}^k |F(\eta_i)|^p \|\eta_i\|\right)^{1/p}:  \eta_i \in \X, \,  \supp(\eta_i) \cap \supp(\eta_j) = \emptyset \textrm{ if } i \ne j\right\}.
\end{equation*}
\begin{itemize}
\item $\L^p(\X) = \{F : \X \rightarrow \R: \|F\|_{\L^p(\X)} < \infty\}$, 
\item  $L^p(\X) = \{F_f : f \in L^p(X)\}$,
\item $\L^p_{\rm loc}(\X)$, $L^p_{\rm loc}(\X)$: local versions of $\L^p(\X)$ and $L^p(\X)$ (see section \ref{lploc}).
\end{itemize} 
\noindent{\bf  Notation related to upper gradients}\smallskip\newline
For a function $F: \X \rightarrow \R$ and a number $\vae >0$ we introduce:
$$r_F^{\vae}(\eta) := \sup\left\{\frac{|F(\nu(s)) -F(\eta)|}{s} : 
 \nu \in \WR(\X) \textrm{ such that } \nu(0)=\eta \textrm{ and } 0< s < \vae \wedge b_{\nu} \right\},$$
and then we define
$$r_F(\eta)= r_F^{0}(\eta) = \lim_{\vae \rightarrow 0} r_F^{\vae}(\eta).$$
For any element $\eta \in \WR(\X)$ we have
$$|F(\eta(s)) -F(\eta(0))| \leq \int_0^s \breve{r}_F(\eta(t))\,dt.$$

In case $f \in L^1_{\rm loc}(X)$, and $r_{F_f} \in \L^1_{\rm loc}(\X)$, then there is an a.e. unique function $g_f \in L^1_{\rm loc}(X)$ such that
$$r_{F_f} =F_{g_f}.$$
 
\noindent{\bf  Notation related to $\S^{1,p}(\X)$}\newline
\begin{itemize}
\item $\|F\|_{\S^{1,p}(\X)} : = (\|F\|_{\L^p(\X)}^p + \|r_F\|_{\L^p(\X)}^{p})^{1/p},$
\item $\S^{1,p}(\X) = \{ F \in \L^p(\X) : \|F\|_{\S^{1,p}(\X)} < \infty\},$
\item $S^{1,p}(\X) = \{ F \in L^p(\X) : \|F\|_{\S^{1,p}(\X)} < \infty\},$
\item $S^{1,p}(X) =\{f \in L^p(X): F_f \in S^{1,p}(\X)\}.$
\end{itemize}
\section{Preliminaries}
Given a metric space $(Y,d_Y)$ we denote by $B(y,r)$ the ball with center $y$ and radius $r$ (where the space $Y$ should be understood from the context). 
For a set $A \subset Y$ and $\vae \geq 0$ we also introduce
$$A_{\vae} = \{y \in Y: {\rm dist}(y,A) \leq \vae\}.$$
It is clear that $A_{\vae}$ is closed, $A_0 = \overline{A}$ and $(A_{\vae})_{\delta} \subset A_{\vae+\delta}$. Furthermore we note that in case $\vae_n$ decreases to $\vae$ as $n \rightarrow \infty$, then $A_{\vae_n}$ decreases to $A_{\vae}$ as $n \rightarrow \infty$. We also have for $A \subset C$ and $\vae \leq \delta$ that 
$A \subset A_{\vae} \subset C_{\vae} \subset C_{\delta}.$  Finally, if $K \subset Y$ is compact and $Y$ is locally compact, then there is $\vae >0$ such that $K_{\vae}$ is also compact.

A {\it rectifiable curve} $\gamma$ is a map $\gamma : [a,b] \rightarrow Y$ where $-\infty <a \leq b < \infty$   such that the length $l_{\gamma} < \infty$, where the length is defined by
$$l_{\gamma} = \sup\left\{\sum_{i=0}^k d_Y (\gamma(a_{i+1}),\gamma(a_i)): a \leq a_0 \leq a_1 \leq a_2 \leq \ldots \leq a_{k+1} \leq b\right\}.$$
We say that a rectifiable curve $\gamma$ is subparametrized by arc-length if the map $\gamma$ is $1$-Lipschitz, i.e. if for every $a\leq s\leq s+t \leq b$ we have
$$d_Y(\gamma(s+t),\gamma(s)) \leq t.$$
In particular, if $\gamma$ is subparametrized by arc-length then for every $a\leq s\leq s+t \leq b$ have
$$t \geq \sup\left\{ \sum_{i=0}^k d_Y(\gamma(r(a_{i+1})),\gamma(r(a_i))): s=a_0\leq a_1 \leq \ldots \leq a_{k+1}=s+t\right\}$$
(that is, the length of the curve $\gamma|_{[s,s+t]}$ is at most $t$).
We let $\WR(Y)$ denote the set of rectifiable curves subparametrized by arc-length. Unless otherwise stated we assume in this case that $a=0$ and $b=b_{\gamma} \geq 0$ in the sequel.
 
A rectifiable curve may always be parametrized by arc-length in the sense that there is an increasing function $r: [0,l_{\gamma}] \rightarrow [a,b]$ such that for every pair of non-negative real numbers $s,t$ such that $0 \leq s \leq s+t \leq l_{\gamma}$ we have
$$t=\sup\left\{ \sum_{i=0}^k d_Y(\gamma(r(a_{i+1})),\gamma(r(a_i))): s=a_0\leq a_1 \leq \ldots \leq a_{k+1}=s+t\right\}.$$
In case $a=0$, $b=l_{\gamma}$ and $r(x)=x$ for each $x$, then we say that $\gamma$ is parametrized by arc-length. We denote the set of all rectifiable curves parametrized by arc-length by $\RR(Y)$. For reasons that will be made clear later we will mainly work with the set $\WR(Y)$ rather than $\RR(Y)$ in this article.
\begin{remark}
If $\gamma \in \RR(Y)$, then we write $\int_0^{l_{\gamma}} f(\gamma(s))\, ds$ for the path integral of the function $f$ over $\gamma$. 

We should also warn the reader already here that the points in our spaces will typically later be measures, so the notation $\int f \,d \gamma$ will not be used for path integrals since this can be misunderstood.

Indeed in this context $\int f \,d \gamma(s)$ would denote the integral of $f$ with respect to the measure $\gamma(s)$.
\end{remark}
{\bf From now on by a curve we will always mean a rectifiable curve unless otherwise stated.}
\begin{lemma} \label{fundth}
Suppose $f: [0,a] \rightarrow \R$ and $g : [0,a] \rightarrow [0,\infty)$ are such that $g$ is upper semicontinuous and 
$$\limsup_{s \rightarrow t} \left| \frac{f(s)-f(t)}{s-t}\right| \leq g(t) \textrm{ for all  } t \in [0,a],$$
then
$$|f(s)-f(0)| \leq \int_0^s g(t)\, dt \textrm{ for all  } s \in [0,a].$$
\end{lemma}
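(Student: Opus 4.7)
The plan is to reduce to a continuous majorant of $g$ using the upper semicontinuity hypothesis, then finish by a Dini-derivative/monotonicity argument.

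First I would note that $f$ is continuous on $[0,a]$ (the $\limsup$ condition with finite $g(t)$ is a local Lipschitz-type bound at each point), and that $g$ is bounded on $[0,a]$ by upper semicontinuity plus compactness, hence Lebesgue integrable. Fix $\varepsilon > 0$ and form the Lipschitz regularization
$$h_n(t) := \sup_{r \in [0,a]} \bigl[g(r) - n|r-t|\bigr].$$
Each $h_n$ is $n$-Lipschitz and satisfies $h_n \geq g$ (take $r=t$), and $h_n \downarrow g$ pointwise as $n \to \infty$: for $r$ far from $t$ the penalty $-n|r-t|$ eventually dominates the bounded $g(r)$, while near $t$ upper semicontinuity bounds $g(r) \leq g(t) + \eta$. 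Monotone convergence then gives $\int_0^a h_n \to \int_0^a g$, so fix $n$ with $\int_0^a (h_n - g) < \varepsilon$.

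Set $\Psi(s) := |f(s) - f(0)| - \int_0^s h_n(t)\,dt - \varepsilon s$, which is continuous on $[0,a]$ with $\Psi(0) = 0$. By the reverse triangle inequality and the hypothesis,
$$\limsup_{s \to t^+} \frac{|f(s)-f(0)| - |f(t)-f(0)|}{s-t} \leq \limsup_{s \to t^+} \frac{|f(s)-f(t)|}{s-t} \leq g(t),$$
while continuity of $h_n$ gives $\lim_{s \to t^+} \tfrac{1}{s-t}\int_t^s h_n(r)\,dr = h_n(t)$. Hence the upper right Dini derivative $D^+\Psi(t) := \limsup_{s \to t^+}\tfrac{\Psi(s)-\Psi(t)}{s-t}$ satisfies
$$D^+\Psi(t) \leq g(t) - h_n(t) - \varepsilon \leq -\varepsilon < 0$$
for every $t \in [0,a)$. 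The standard monotonicity result (a continuous function on $[0,a]$ with $D^+ \leq 0$ is non-increasing; the strict version here has a one-line proof by considering $\sup\{s : \Psi(s)\leq 0\}$) yields $\Psi \leq 0$ on $[0,a]$. Therefore $|f(s) - f(0)| \leq \int_0^s g + \varepsilon(1+a)$, and letting $\varepsilon \to 0$ completes the argument.

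The main obstacle is that the pointwise bound $D^+[|f(\cdot)-f(0)|](t) \leq g(t)$ does not translate directly into the desired integral estimate: for a merely u.s.c.\ $g$, the right derivative of $s \mapsto \int_0^s g$ at $t$ can be strictly smaller than $g(t)$ (picture an upward spike of $g$ at an isolated point), so a naive forward-stepping continuity argument on $\{s : |f(s) - f(0)| \leq \int_0^s g + \varepsilon\}$ gets stuck exactly where the estimate becomes tight. Upper semicontinuity of $g$ is precisely what permits the continuous majorant $h_n$ with arbitrarily small integral excess, and for continuous $h_n$ the pointwise value and the right derivative of the antiderivative coincide, which is what makes the Dini inequality closable.
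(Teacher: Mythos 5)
Your proof is correct. The paper states Lemma \ref{fundth} without any proof, merely calling it a ``no doubt well-known fact,'' so there is no argument in the paper to compare against. Your route---replacing the upper semicontinuous $g$ by the decreasing sequence of Lipschitz sup-convolutions $h_n(t)=\sup_r\bigl[g(r)-n|r-t|\bigr]$ (which majorize $g$ and converge pointwise to it precisely because $g$ is u.s.c.\ and bounded), then applying the standard monotonicity criterion for the upper right Dini derivative to $\Psi(s)=|f(s)-f(0)|-\int_0^s h_n-\varepsilon s$---is complete. Each step checks out: $f$ is continuous because the pointwise $\limsup$ bound with finite $g(t)$ gives a local Lipschitz estimate at each $t$; $g$ is bounded and integrable on $[0,a]$ by u.s.c.\ plus compactness; $h_n\downarrow g$ pointwise by the usual sup-convolution argument; the reverse triangle inequality together with the hypothesis gives $D^+\Psi(t)\leq g(t)-h_n(t)-\varepsilon\leq-\varepsilon$; and the strict-sign Dini monotonicity step has the short direct proof you indicate. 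Letting $\varepsilon\to 0$ finishes. Your closing remark also correctly pinpoints the role of upper semicontinuity: for merely u.s.c.\ $g$ the right derivative of $s\mapsto\int_0^s g$ can drop strictly below $g(t)$ at an isolated spike, which is exactly what would block a direct forward-continuation argument and what the continuous majorant circumvents.
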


The above no doubt well-known fact will be extensively used, and in particular it will be important when we study function restrictions to rectifiable curves. Some more terminology associated with (rectifiable) curves are as follows.
\begin{definition}
We say that a function $f :Y \rightarrow \R$ is 
\begin{itemize}
\item[(a)] {\it continuous along curves} if $\lim_{s \rightarrow t} f(\eta(s))=f(\eta(t))$ for all $\eta \in \WR(Y)$,\\
\item[(b)] {\it upper semicontinuous along curves} if $\limsup_{s \rightarrow t} f(\eta(s)) \leq f(\eta(t))$ for all $\eta \in \WR(Y)$,\\
\item[(c)] {\it lower semicontinuous along curves} if $\liminf_{s \rightarrow t} f(\eta(s)) \geq f(\eta(t))$ for all $\eta \in \WR(Y)$.
\end{itemize}
\end{definition}
Given a function $f:Y \rightarrow \R$ we introduce the {\it upper semicontinuous regularization over curves} $\breve{f}$ of $f$ as
$$\breve{f}(y)=\lim_{s \rightarrow 0} \left(\sup \{f(\gamma(r)): 0 \leq r <s, \gamma \in \WR(Y), \gamma(0)=y\}\right).$$
Note that for any $\delta >0$ and $\vae>0$ there are a curve $\gamma$ and $r \in [0,\delta)$ such that
$$|\breve{f}(y) - f(\gamma(r))| <\vae.$$
Indeed we even have the following result
\begin{lemma}\label{maxalc1}
If $f : Y \rightarrow \R$ and $y \in Y$ then there is a curve $\nu \in \WR(Y)$ such that 
$$\breve{f}(y) = \limsup_{s \rightarrow 0} f(\nu(s)).$$
\end{lemma}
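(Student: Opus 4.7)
The plan is to build $\nu$ by concatenating, in a shrinking window near $s = 0$, a sequence of round-trip excursions along curves that witness the supremum in the definition of $\breve{f}(y)$. Set $M = \breve{f}(y)$ and observe that $M \geq f(y) > -\infty$, since the constant curve at $y$ is admissible with $r = 0$. If $M = f(y)$ the constant curve $\nu(s) \equiv y$ already does the job. Otherwise, using that the supremum defining $\breve{f}(y)$ is monotone non-increasing as $s$ shrinks to $0$, for each $n \in \N$ I may pick $\gamma_n \in \WR(Y)$ with $\gamma_n(0) = y$ and a parameter $r_n \in (0, 2^{-n})$ satisfying $f(\gamma_n(r_n)) > M - 1/n$ when $M < \infty$, respectively $f(\gamma_n(r_n)) > n$ when $M = \infty$.

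Next, set $T_n = \sum_{k > n} 2 r_k$, so $T_0 < \infty$ and $T_n \searrow 0$; the intervals $I_n = [T_n, T_{n-1}]$ of length $2 r_n$ partition $(0, T_0]$. On $I_n$ I would define $\nu$ to trace $\gamma_n$ from $y$ out to $\gamma_n(r_n)$ over the first half of $I_n$ and retrace back to $y$ over the second half, and then set $\nu(0) = y$. By construction $\nu(T_n) = y$ for every $n \geq 0$, and for $s \in I_n$ one has $d_Y(\nu(s), y) \leq r_n$. Within each $I_n$ the curve $\nu$ is $1$-Lipschitz because $\gamma_n$ is. For $0 < s_2 < s_1$ lying in distinct intervals, say $s_1 \in I_n$ and $s_2 \in I_m$ with $n < m$, the triangle inequality through $y = \nu(T_n)$ gives $d_Y(\nu(s_1), \nu(s_2)) \leq (s_1 - T_n) + (T_{m-1} - s_2) \leq s_1 - s_2$, using $T_n \geq T_{m-1}$. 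The case $s_2 = 0$ reduces to $d_Y(\nu(s_1), y) \leq s_1 - T_n \leq s_1$. Hence $\nu \in \WR(Y)$.

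Finally, at the midpoint $s_n^{\star} = T_n + r_n$ of $I_n$ one has $\nu(s_n^{\star}) = \gamma_n(r_n)$ with $s_n^{\star} \to 0$, giving $\limsup_{s \to 0} f(\nu(s)) \geq M$. For the reverse inequality, any $\nu(s)$ with $s \in I_n$ has the form $\gamma_n(v)$ for some $v \in [0, r_n]$, and the restriction $\gamma_n|_{[0, v]}$ is a curve in $\WR(Y)$ from $y$ of length $v < 2^{-n}$; by the definition of $\breve{f}(y)$ this forces $f(\nu(s)) \leq \sup\{f(\gamma(r')) : 0 \leq r' < 2^{-n},\ \gamma \in \WR(Y),\ \gamma(0) = y\}$, whose limit as $n \to \infty$ is $M$. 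Thus $\limsup_{s \to 0} f(\nu(s)) = M = \breve{f}(y)$. The main obstacle I anticipate is the bookkeeping needed to glue countably many excursions into one $1$-Lipschitz curve with the right behavior at the accumulation point $s = 0$; arranging all excursion endpoints to coincide with the anchor $y$ is what keeps the triangle-inequality check clean.
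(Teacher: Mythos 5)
Your proposal is correct and follows essentially the same approach as the paper: concatenate countably many out-and-back excursions along near-optimal witnessing curves, anchored at $y$, on intervals shrinking toward $s=0$, so that the resulting map is $1$-Lipschitz and achieves the desired $\limsup$. The paper packages the bookkeeping slightly differently (building the curve "forward" as $\gamma$ on $[0,b]$ and then reversing via $\nu(t)=\gamma(b-t)$, and carrying an auxiliary sequence $\vae_n$ with $\vae_{n+1}\le 2^{-n}\delta_n$), but the construction and the verification are the same in substance; your explicit triangle-inequality check across intervals and your separate treatment of the $\breve f(y)=+\infty$ case via $f(\gamma_n(r_n))>n$ are both sound and, if anything, a bit more careful on the edge case.
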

\begin{proof}
In case $\breve{f}(y)=f(y)$, then we may simply let $\nu$ be the constant curve with value $y$. Otherwise we may by definition inductively choose sequences $\delta_n$ and $\vae_n$ decreasing to zero and curves $\nu_n \in \WR(Y)$ such that
\begin{itemize}
\item[(1)] $0<\delta_n \leq \vae_n$,
\item[(2)] $\nu_n(0)= y$,
\item[(3)] $\breve{f}(y) \leq f(\nu_n(\delta_n))+\vae_n$,
\item[(4)] $\vae_{n+1} \leq 2^{-n} \delta_n$.
\end{itemize}
Let
$$k_n= \delta_n + 2\sum_{j=1}^{n-1} \delta_j, \quad b=2\sum_{j=1}^{\infty} \delta_j.$$
We define $\gamma: [0,b] \rightarrow Y$ such that 
$$\gamma(t)= \left\{ 
\begin{array}{ll} 
\nu_n(t-k_n+\delta_n), & t \in [k_n-\delta_n,k_n)\\
\nu_n(k_n+\delta_n - t), & t \in [k_n,k_n+\delta_n)\\
y, & t=b.\end{array}\right.$$
Finally put $\nu(t)=\gamma(b-t)$. It is easy to verify that $\nu \in \WR(Y)$ ($\gamma$ simply consists of rectifiable curves subparametrized by arc-length going back to forth from $y$ and then patched together). If we define 
$$r_n=\delta_n + 2\sum_{j=n+1}^{\infty} \delta_j,$$
then 
$$\nu(r_n)=\gamma(b-r_n) =\gamma\left(\delta_n+2\sum_{j=1}^{n-1}\delta_j\right) =\gamma(k_n)=\nu_n(\delta_n).$$
Hence we get
\begin{align*}
&\breve{f}(y) \leq \limsup_{n \rightarrow \infty} (f(\nu(r_n)) + \vae_n) \leq \limsup_{s \rightarrow 0} f(\nu(s)) + \limsup_{n \rightarrow \infty} \vae_n \leq \breve{f}(y).
\end{align*}
\end{proof}
Throughout the article we let $(X,d_X,\mu)$ be a fixed metric measure space such that 
\begin{align*}
&0<\mu(B(x,r))< \infty  \textrm{ for all balls } B(x,r) \textrm{ of radius } r \textrm{ and center }x, \\
& X \textrm{ is locally compact,}\\
& X \textrm{ is  separable.}
\end{align*}
(Actually the last part is a consequence of the first two assumptions since $X={\rm supp}(\mu)$.)

For $p \in [1,\infty]$ we use the notation
$$L^p(X)$$
to denote the class of all Borel measurable extended real-valued functions $f$ such that
\begin{align*}
\|f\|_{L^p(X)} &= \left(\int |f|^p \,d \mu\right)^{1/p} < \infty \quad (p \in [1,\infty)),\\
 \|f\|_{L^{\infty}(X)} &= {\rm ess\, sup}_{x \in X} |f(x)| < \infty. 
\end{align*} 
The spaces $L^p_{\rm loc}(X)$ are also defined as usual. 

If $Y$ is a set and $f,g: Y \rightarrow \overline{\R}$ then we introduce the following lattice notation:
$$f \vee g(x) = \max\{f(x),g(x)\},\quad f \wedge g(x) = \min\{f(x),g(x)\},$$
which makes the set of all such functions into a lattice (if we restrict attention to real-valued functions these forms a vector lattice).

A measure $\eta$ on $X$ will always refer to a non-negative Borel measure such that $\eta(B(x,r)) < \infty$ for all balls $B(x,r)$.  For any measure $\eta$ we also let $\|\eta\|$ denote its total mass.
These measures on $X$ also carries a natural partial order, and it is well-known to be a lattice. For any Borel measures $\eta,\nu$ on $X$ we denote their least upper bound and greatest lower bound by
$\eta \vee \nu$ and  $\eta \wedge \nu$ respectively.

\begin{lemma}\label{K_t}
Suppose $K(t) \subset X$ is compact for each $t \in [0,T]$. Suppose furthermore that for any $s,t \in [0,T]$ we have with $\vae =|s-t|$
$$K(t) \subset K(s)_{\vae}.$$
Then 
$$ \bigcup_{t \in  [0,T]} K(t)$$
is compact.
\end{lemma}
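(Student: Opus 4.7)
The plan is to show that $U := \bigcup_{t \in [0,T]} K(t)$ is both contained in some compact subset of $X$ and closed in $X$, from which compactness of $U$ follows.

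For the first part I would use local compactness of $X$ together with compactness of $[0,T]$. For each $t \in [0,T]$ the preliminaries guarantee a $\delta_t > 0$ such that $K(t)_{\delta_t}$ is compact. The open intervals $I_t := (t-\delta_t, t+\delta_t)$ form a cover of $[0,T]$, so by compactness of $[0,T]$ we may extract a finite subcover $I_{t_1}, \ldots, I_{t_n}$. For any $t \in [0,T]$ there is some $i$ with $|t - t_i| < \delta_{t_i}$, and the hypothesis gives
$$K(t) \subset K(t_i)_{|t - t_i|} \subset K(t_i)_{\delta_{t_i}}.$$
Consequently $U \subset \bigcup_{i=1}^n K(t_i)_{\delta_{t_i}}$, a finite union of compact sets and hence compact.

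For the second part I would verify that $U$ is closed. Suppose $(x_m) \subset U$ with $x_m \to x$, and choose $t_m \in [0,T]$ with $x_m \in K(t_m)$. Passing to a subsequence (still denoted $(t_m)$), we may assume $t_m \to t^* \in [0,T]$. The hypothesis yields $K(t_m) \subset K(t^*)_{|t^* - t_m|}$, hence
$$\mathrm{dist}(x, K(t^*)) \leq d_X(x, x_m) + \mathrm{dist}(x_m, K(t^*)) \leq d_X(x, x_m) + |t^* - t_m| \longrightarrow 0.$$
Since $K(t^*)$ is compact, it is closed, so $x \in K(t^*) \subset U$. Thus $U$ is closed in the compact set from the first step, and therefore compact.

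The only subtlety, rather than a real obstacle, is recognizing that local compactness of $X$ is genuinely needed: without it the thickenings $K(t)_{\delta}$ may fail to be compact and the finite-subcover argument collapses. The Hausdorff-type continuity built into the hypothesis is exactly what converts the one-dimensional compactness of $[0,T]$ into compactness of the union in $X$.
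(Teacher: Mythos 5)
Your proof is correct and follows the same overall plan as the paper: enclose $\bigcup_t K(t)$ in a finite union of compact thickenings, then show the union is closed via a convergent-subsequence argument on the parameters $t_m$. The only difference is in the first step, where the paper establishes a uniform positive lower bound $c$ on the admissible thickening radii by a contradiction/sequential argument and then covers $[0,T]$ by the grid $\{0,c,2c,\ldots\}$, whereas you extract a finite subcover of $[0,T]$ directly with variable radii $\delta_{t_i}$ — a mild streamlining that reaches the same conclusion.
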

\begin{proof}
Let 
$$f(s) =  \sup \left\{\vae : K(s)_{\vae} \textrm{ is compact}\right\}.$$
Clearly $f(s) >0$ for each $s$. 
Let $c= \frac{1}{2}\inf \{f(s) : s \in [0,T]\}$. Suppose $c=0$, then there is a sequence $s_n$ converging to some $s$ in $[0,T]$ such that $f(s_n) \rightarrow 0$ as $n \rightarrow \infty$.
But if we put $\delta =f(s)/4>0$, then for $|s_n - s|< \delta$ we get
$$K({s_n})_{\delta} \subset (K({s})_{\delta})_{\delta} \subset K(s)_{2 \delta},$$
which by definition is compact. Hence we get a contradiction, and we see that indeed $c>0$. But now we get
$$\bigcup_{t \in [0,T]} K(t) \subset K(0)_c \cup K(c)_c \cup \ldots \cup K(nc)_c$$
where $nc \leq T <(n+1)c$. The right hand side is compact, so it only remains to show that $\bigcup_{t \in [0,T]} K(t)$ is closed.
So suppose that $x \in \overline{\bigcup_{t \in [0,T]} K(t)}$. Then by definition there is for each $n$ a point $x_n \in \bigcup_{t \in [0,T]} K(t)$ such that $d_X(x_n,x) \leq 1/n$, and then there are $t_n \in [0,T]$ such that $x_n \in K(t_n)$. We may assume, by passing to a subsequence, that $t_n$ converges to $t$ as $n \rightarrow \infty$. If $\vae >0$, then for every $n$ so large that $|t-t_n|+1/n <\vae$ we have
$$x \in K(t_n)_{1/n} \subset K(t)_{\vae}.$$
Since $\vae >0$ was arbitrary it follows that $x \in K(t) \subset \bigcup_{t \in [0,T]} K(t)$.

\end{proof}

\section{The space $(\M,d_{\M})$} \label{METRIC}
We let $\P$ denote the set of all measures with compact support in $X$ and
$$\M =\{\nu \in \P: 0 \leq \nu \leq \mu\}.$$
In particular $ 0 \in \M$.
We note that $\M$ is locally closed for the weak$^*$-topology in the sense that any sequence $\eta_i \in \M$ such that $\eta_i \rightharpoonup \eta$ in $\P$ and $\bigcup_{i=1}^{\infty} \supp(\eta_i)$ is contained in a compact subset of $X$, then $\eta$ also belongs to $\M$. 
\begin{remark}
A measure $\nu$ belongs to $\M$ if and only if there is a measurable function $\phi : X \rightarrow [0,1]$ with compact support such that $\nu = \phi \mu$.
Also note that the lattice operations are equivalent in the following sense if $\nu_i=\phi_i \mu$:
$$\nu_1 \vee \nu_2 = (\phi_1 \vee \phi_2) \mu, \quad \nu_1 \wedge \nu_2 = (\phi_1 \wedge \phi_2) \mu.$$
Furthermore $\|\nu\|=\|\phi\|_{L^1(\mu)}$ and if $\nu_i = \phi_i \mu$ and $\nu=\phi \mu$ belongs to $\M$,  then $\nu_i$ converges weak$^*$ to $\nu$ in $\M$ if and only if $\phi_i$ converges weak$^*$ to $\phi$ in $L^{\infty}(\mu)$.
Hence one could alternatively think of the elements in $\M$ as consisting of all such functions $\phi$ rather than measures with essentially no changes in the proofs below. 
\end{remark} 
We will now introduce a metric $d_{\M}$ on $\M$. To do this we first fix a strictly increasing continuous function $h: [0,\infty) \rightarrow [0,\infty)$ such that
\begin{align*}
& h(0) =\lim_{\vae \rightarrow 0^+} \frac{h(\vae)}{\vae} =0,\\
& h(\vae_1) + h(\vae_2) \leq h(\vae_1 + \vae_2) \quad \forall \vae_1,\vae_2 \in [0,\infty),\\
& \lim_{\vae \rightarrow \infty} h(\vae)=\infty.
\end{align*}
One example of $h$ is $h(\vae)=\vae^s$ for any fixed $s \in (1,\infty)$.
The construction of the metric depends on decompositions of measures, and it will be convenient to introduce for $\eta$ and $\nu$ in $\M$ and $\vae,\delta >0$
$$\Gamma_{\vae,\delta}(\nu,\eta) = \left\{ (\nu_i,\eta_i)_{i \in \MM}: \begin{array}{l} 
\MM \textrm{ is at most countable},\\
\nu_i,\eta_i \in \M \textrm{ for each } i \in \MM\\
\nu = \sum_{i \in \MM} \nu_i,\quad \eta = \sum_{i \in \MM}\eta_i,\\
\sum_{i\in \MM}  \bigl|\|\nu_i\|-\|\eta_i\|\bigr| \leq \delta,\\
\textrm{diam}(\supp(\nu_i) \cup \supp(\eta_i)) \leq \vae. \end{array}\right\}.$$
We also introduce 
$$\Gamma_{\vae}(\nu,\eta)=\Gamma_{\vae,h(\vae)}(\nu,\eta).$$
To make the notation less cumbersome we will often drop the index set when it is clear from the context and simply write $(\nu_i,\eta_i) \in \Gamma_{\vae,\delta}(\nu,\eta)$.
\begin{remark}
Of-course we could in the definition above have worked with only $\N$ instead of a general set $\MM$, but this is for convenience later, since we often will have for instance double subscripts, and we wish to avoid the need to relabel these.
\end{remark}

We now introduce a metric on $\M$ as follows:
\begin{equation}\label{mmetric}
d_{\M}(\nu,\eta) := \inf\left\{ \vae : \Gamma_{\vae}(\nu,\eta) \ne \emptyset \right\}.
\end{equation}
\begin{remark}
Note that if we put $\eta_1=\eta$, $\nu_1=\nu$ and $\eta_i=\nu_i=0$ for $i \ne 1$, and simply choose $\vae>0$ large enough such that $\bigl| \|\eta\|-\|\nu\| \bigr| < h(\vae)$ (which is possible since $h(\vae) \rightarrow \infty$ as $\vae \rightarrow \infty$) and ${\rm diam}(\supp(\nu) \cup \supp(\eta)) < \vae$ then $(\nu_i,\eta_i)\in \Gamma_{\vae}(\nu,\eta)$ and hence $d_{\M}(\nu,\eta) \leq \vae$. So $d_{\M}(\nu,\eta)$ is always finite. 
\end{remark}
\begin{remark}\label{metrem}
It is easy to see that we could just as well have restricted ourselves to finite sums rather than countable ones in the definition of $\Gamma_{\vae,\delta}$ without altering the metric $d_{\M}$, but allowing countable sums makes it easier to work with.

The choice of $h$ of-course makes a difference for the metric in the sense in how expensive it is to enlarge the mass, but the particular choice of $h$ will not be very important to us as we will see, because we will work mainly with rectifiable curves, and the role of $h$ then just becomes to force the total mass of the measures along such a curve to be constant (which will always be the case as long as $h$ satisfies the assumptions above). 
\end{remark}
Here are some simple consequences of the definition:
\begin{theorem}
Suppose $d_{\M}(\nu,\eta) \leq \delta$, then
\begin{itemize}
\item[(1)] $\bigl| \|\nu\|-\|\eta\| \bigr| \leq h(\delta)$,
\item[(2)] $\eta(({\rm supp}(\nu)_{\delta})^c) \leq h(\delta)$.
\end{itemize}
\end{theorem}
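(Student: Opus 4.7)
The plan is to unfold the definition of $d_{\M}$ as an infimum, derive each inequality for arbitrary $\vae > \delta$, and then pass $\vae \downarrow \delta$ using continuity of $h$. Since $d_{\M}(\nu,\eta) \leq \delta$, for every $\vae > \delta$ the set $\Gamma_{\vae}(\nu,\eta) = \Gamma_{\vae,h(\vae)}(\nu,\eta)$ is non-empty, so I can pick a decomposition $(\nu_i,\eta_i)_{i \in \MM}$ with $\nu = \sum_i \nu_i$, $\eta = \sum_i \eta_i$, $\sum_i \bigl|\|\nu_i\| - \|\eta_i\|\bigr| \leq h(\vae)$, and $\textrm{diam}(\supp(\nu_i) \cup \supp(\eta_i)) \leq \vae$ for each $i$.

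For (1) the triangle inequality for sums suffices:
$$\bigl|\|\nu\| - \|\eta\|\bigr| = \Bigl| \sum_i \bigl(\|\nu_i\| - \|\eta_i\|\bigr) \Bigr| \leq \sum_i \bigl|\|\nu_i\| - \|\eta_i\|\bigr| \leq h(\vae),$$
and letting $\vae \downarrow \delta$ yields $\bigl|\|\nu\| - \|\eta\|\bigr| \leq h(\delta)$.

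For (2) I would split the index set as $J := \{i : \nu_i = 0\}$ and its complement. For $i \in J$ the defect is simply $\|\eta_i\|$, so $\sum_{i \in J} \|\eta_i\| \leq h(\vae)$. For $i \notin J$, fix any $y \in \supp(\nu_i) \subset \supp(\nu)$; the diameter bound then forces $\supp(\eta_i) \subset \supp(\nu)_{\vae}$. With $A := (\supp(\nu)_{\delta})^c$ this gives
$$\eta(A) = \sum_i \eta_i(A) \leq \sum_{i \in J} \|\eta_i\| + \sum_{i \notin J} \eta_i(A \cap \supp(\nu)_{\vae}) \leq h(\vae) + \eta\bigl(\supp(\nu)_{\vae} \setminus \supp(\nu)_{\delta}\bigr).$$
Taking $\vae_n \downarrow \delta$, the first term tends to $h(\delta)$ by continuity of $h$, and the preliminaries record that $\supp(\nu)_{\vae_n}$ decreases to $\supp(\nu)_{\delta}$; since $\eta$ is a finite Borel measure, continuity from above drives the second term to zero.

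The only step beyond pure bookkeeping is this last appeal to continuity from above of $\eta$, which is where finiteness of $\eta$ and continuity of $h$ at $\delta$ are genuinely used; it is the heart of part (2).
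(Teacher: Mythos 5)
Your proof is correct and follows essentially the same route as the paper: pick a decomposition from $\Gamma_{\vae}(\nu,\eta)$ for $\vae > \delta$, use the summed triangle inequality for (1), split the index set by whether $\nu_i = 0$ for (2), and pass $\vae \downarrow \delta$ via continuity of $h$. The only cosmetic difference is in the final limit of (2): the paper proves the clean inequality $\eta((\supp(\nu)_{\vae})^c) \leq h(\vae)$ for every $\vae > \delta$ and then lets the complements increase (continuity from below), whereas you keep an annulus term $\eta(\supp(\nu)_{\vae}\setminus\supp(\nu)_{\delta})$ and kill it by continuity from above, which is equally valid since $\eta \in \M$ is finite.
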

\begin{proof}
Suppose $\delta < \vae$. Then by definition there is an element $(\nu_i,\eta_i) \in \Gamma_{\vae}(\nu,\eta)$, and by definition this means that
$$\begin{array}{l} 
\nu = \sum_{i=1}^ {\infty} \nu_i,\quad \eta = \sum_{i=1}^{\infty} \eta_i,\\
\sum_{i=1}^{\infty}  \bigl| \| \nu_i \|- \|\eta_i \| \bigr| < h(\vae),\\
\textrm{diam}(\supp(\nu_i) \cup \supp(\eta_i)) <\vae. \end{array}$$
Therefore
$$\bigl| \|\nu \|- \| \eta \| \bigr| \leq \sum_{i=1}^{\infty}  \bigl| \| \nu_i \|- \|\eta_i \| \bigr| < h(\vae).$$
By continuity of $h$ we get that 
$$\bigl| \| \nu \| - \| \eta \| \bigr|  \leq h(\delta).$$

Now let $I$ denote the set of all $i$ such that $\nu_i \ne 0$ above.
Then it is clear that ${\rm supp}(\eta_i) \subset {\rm supp}(\nu_i)_{\vae} \subset {\rm supp}(\nu)_{\vae}$. Hence
$$\eta(({\rm supp}(\nu)_{\vae})^c) \leq \sum_{i \not \in I} \|\eta_i\|\leq h(\vae).$$
Again by continuity of $h$ and the fact that
$({\rm supp}(\nu)_{\delta + 1/n})^c$ increases to $({\rm supp}(\nu)_{\delta})^c$ as $n \rightarrow \infty$ we get the desired estimate.
\end{proof}
\begin{lemma}\label{metlem}
Suppose $\eta,\nu,\gamma_{\eta},\gamma_{\nu} \in \M$ are such that also $\nu+\gamma_{\nu}, \eta+\gamma_{\eta} \in \M$. If $\|\gamma_{\nu} + \gamma_{\eta}\| \leq \delta$ and  
 $\Gamma_{\vae,\delta - ||\gamma_{\nu} + \gamma_{\eta}||}(\nu,\eta) \ne \emptyset$,
then $\Gamma_{\vae,\delta}(\nu+\gamma_{\nu},\eta+\gamma_{\eta}w) \ne \emptyset$. 
\end{lemma}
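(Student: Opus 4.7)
The plan is to take the given decomposition of $(\nu,\eta)$ and augment it by splitting $\gamma_{\nu}$ and $\gamma_{\eta}$ into finitely many pieces of small diameter, each paired with the zero measure. The result then lies in the enlarged family.

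First, I would pick some $(\nu_i,\eta_i)_{i \in \MM} \in \Gamma_{\vae,\delta - \|\gamma_{\nu}+\gamma_{\eta}\|}(\nu,\eta)$. Next, since $\gamma_{\nu} \in \M$ has compact support, I would cover $\supp(\gamma_{\nu})$ by finitely many balls of radius less than $\vae/2$, disjointify them to obtain a Borel partition $\{A_j\}_{j=1}^N$ of $\supp(\gamma_{\nu})$ with $\mathrm{diam}(A_j) \leq \vae$, and set $\gamma_{\nu,j} = \chi_{A_j}\gamma_{\nu}$. Then $0 \leq \gamma_{\nu,j} \leq \gamma_{\nu} \leq \mu$, each $\gamma_{\nu,j}$ has compact support (contained in $\supp(\gamma_{\nu})$), so $\gamma_{\nu,j} \in \M$, and $\gamma_{\nu}=\sum_j \gamma_{\nu,j}$. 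I would perform the analogous construction to obtain $\{\gamma_{\eta,k}\}_{k=1}^{N'}$ for $\gamma_{\eta}$.

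Then I would form the enlarged indexed family consisting of the pairs $(\nu_i,\eta_i)_{i \in \MM}$ together with the pairs $(\gamma_{\nu,j},0)_{j=1}^N$ and $(0,\gamma_{\eta,k})_{k=1}^{N'}$. Verification is routine: the first coordinates sum to $\nu + \gamma_{\nu}$ and the second to $\eta + \gamma_{\eta}$; for each new pair the combined support is just $\supp(\gamma_{\nu,j})$ or $\supp(\gamma_{\eta,k})$, which has diameter at most $\vae$ by construction; and the mass discrepancy is bounded by
$$\sum_{i \in \MM} \bigl|\|\nu_i\|-\|\eta_i\|\bigr| + \sum_j \|\gamma_{\nu,j}\| + \sum_k \|\gamma_{\eta,k}\| \leq \bigl(\delta - \|\gamma_{\nu}+\gamma_{\eta}\|\bigr) + \|\gamma_{\nu}\| + \|\gamma_{\eta}\| = \delta,$$
using that $\|\gamma_{\nu}\|+\|\gamma_{\eta}\| = \|\gamma_{\nu}+\gamma_{\eta}\|$ by non-negativity. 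Hence the enlarged family belongs to $\Gamma_{\vae,\delta}(\nu+\gamma_{\nu},\eta+\gamma_{\eta})$, which is therefore non-empty.

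I do not foresee a real obstacle here; the only subtle point, and the reason the chopping step is needed at all, is that a new pair of the form $(\gamma_{\nu},0)$ must by itself satisfy the diameter constraint, and $\gamma_{\nu}$ need not have small support. Compactness of $\supp(\gamma_{\nu})$ together with separability of $X$ makes the finite small-diameter partition possible, after which everything else reduces to bookkeeping.
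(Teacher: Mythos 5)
Your proof is correct and follows essentially the same approach as the paper's: cover the support of the extra mass by finitely many balls of radius at most $\vae/2$, disjointify to get small-diameter pieces, and append these to the given decomposition. The only cosmetic difference is that the paper covers $\supp(\gamma_{\nu}+\gamma_{\eta})$ once and pairs $\gamma_{\nu}|_{A_i}$ with $\gamma_{\eta}|_{A_i}$, while you cover the two supports separately and pair each piece with the zero measure; both yield the same bound on the added mass discrepancy, namely $\|\gamma_{\nu}\|+\|\gamma_{\eta}\|=\|\gamma_{\nu}+\gamma_{\eta}\|$.
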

\begin{proof}Suppose $(\nu_i,\eta_i) \in \Gamma_{\vae,\delta - ||\gamma_{\nu} + \gamma_{\eta}||}(\nu,\eta)$.
Now cover the support of $\gamma_{\nu} + \gamma_{\eta}$ by finitely many balls $B_1,B_2,\ldots,B_k$ of radius at most $\vae/2$, let 
$$\nu_i'= \left\{\begin{array}{ll} \gamma_{\nu}|_{B_1} & i=1,\\ \gamma_{\nu}|_{B_i \setminus \bigcup_{j=1}^{i-1}B_j} & 1<i \leq k \\ \nu_{i-k} & i >k,\end{array}\right.$$
and
$$\eta_i'= \left\{\begin{array}{ll} \gamma_{\eta}|_{B_1} & i=1,\\ \gamma_{\eta}|_{B_i \setminus \bigcup_{j=1}^{i-1}B_j} & 1<i \leq k \\ \eta_{i-k} & i >k,\end{array}\right.$$
Then it is straightforward to show that $(\nu_i',\eta_i') \in \Gamma_{\vae,\delta}(\nu+\gamma_{\nu},\eta+\gamma_{\eta})$.
\end{proof}

The following lemma will be first of all used to prove the triangle inequality for $d_{\M}$.
\begin{lemma}\label{split}
Suppose $\eta,\nu$ belong to $\M$, $\vae,\delta>0$ and that $(\nu_j',\eta_j') \in \Gamma_{\vae,\delta}(\nu,\eta)$. Suppose also that $\eta=\sum_{i=1}^{\infty} \eta_i$ where each $\eta_i$ belongs to $\M$, and let $\delta_j = \left|\| \nu_j'\|-\|\eta_j'\|\right|$.

Then there is for each $j \in \N$ 
$$(\nu_{i,j},\eta_{i,j})_{ i \in \N} \in \Gamma_{\vae,\delta_j}(\nu_j',\eta_j')$$
with the additional property that for each $i \in \N$
$$\eta_i=\sum_{j=1}^{\infty} \eta_{i,j}.$$

Furthermore, if we define $\nu_i = \sum_{j=1}^{\infty} \nu_{i,j}$ and let $\rho_i = \sum_{j=1}^{\infty}\left| \| \nu_{i,j}\| -\|\eta_{i,j}\|\right|$, then 
$$(\nu_{i,j},\eta_{i,j})_{j \in \N} \in \Gamma_{\vae,\rho_i}(\nu_i,\eta_i) \textrm{ for each } i \in \N.$$

Finally we have
$$(\nu_{i,j},\eta_{i,j})_{i,j \in \N} \in \Gamma_{\vae,\delta}(\nu,\eta).$$
\end{lemma}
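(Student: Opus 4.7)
The plan is to split the measures using their Radon--Nikodym densities with respect to $\mu$ for the $\eta$-side, and to use a proportional splitting for the $\nu$-side that keeps the sign of the mass deficit constant across $i$. The main obstacle will be arranging the $\nu_{i,j}$ so that the sum $\sum_i|\|\nu_{i,j}\|-\|\eta_{i,j}\||$ does not exceed $\delta_j$; everything else is bookkeeping.

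Write every $\nu \in \M$ as $\phi\mu$ with $\phi:X\to[0,1]$ of compact support, so that $\eta=\phi\mu$, $\eta_i=\phi_i\mu$, $\eta_j'=\phi_j'\mu$ satisfy $\sum_i\phi_i=\phi=\sum_j\phi_j'$ ($\mu$-a.e.). Define
$$\phi_{i,j}(x)=\frac{\phi_i(x)\phi_j'(x)}{\phi(x)}\quad\text{if }\phi(x)>0,\qquad \phi_{i,j}(x)=0\text{ otherwise},$$
and set $\eta_{i,j}=\phi_{i,j}\mu$. Since $\phi_{i,j}\le\phi_j'\le 1$, each $\eta_{i,j}\in\M$, and a direct computation gives $\sum_i\eta_{i,j}=\eta_j'$ and $\sum_j\eta_{i,j}=\eta_i$.

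Next, fix $j$ and assume, say, $\|\nu_j'\|\leq\|\eta_j'\|$ (the other case is symmetric). If $\|\eta_j'\|>0$, set
$$\nu_{i,j}=\frac{\|\eta_{i,j}\|}{\|\eta_j'\|}\,\nu_j';$$
if $\|\eta_j'\|=0$ then $\|\nu_j'\|=0$ too, and we take $\nu_{i,j}=0$. Clearly $\nu_{i,j}\in\M$, $\supp(\nu_{i,j})\subset\supp(\nu_j')$, and $\sum_i\nu_{i,j}=\nu_j'$. The key point is that the differences $\|\nu_{i,j}\|-\|\eta_{i,j}\|=\|\eta_{i,j}\|\bigl(\|\nu_j'\|/\|\eta_j'\|-1\bigr)$ all have the same sign, so
$$\sum_i\bigl|\|\nu_{i,j}\|-\|\eta_{i,j}\|\bigr|=\Bigl|\sum_i\bigl(\|\nu_{i,j}\|-\|\eta_{i,j}\|\bigr)\Bigr|=\bigl|\|\nu_j'\|-\|\eta_j'\|\bigr|=\delta_j.$$
Together with the support condition $\supp(\nu_{i,j})\cup\supp(\eta_{i,j})\subset\supp(\nu_j')\cup\supp(\eta_j')$, this verifies $(\nu_{i,j},\eta_{i,j})_{i\in\N}\in\Gamma_{\vae,\delta_j}(\nu_j',\eta_j')$.

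Finally, setting $\nu_i:=\sum_j\nu_{i,j}$ and $\rho_i:=\sum_j|\|\nu_{i,j}\|-\|\eta_{i,j}\||$, the statement $(\nu_{i,j},\eta_{i,j})_{j\in\N}\in\Gamma_{\vae,\rho_i}(\nu_i,\eta_i)$ is immediate from the definition of $\rho_i$ and the identity $\sum_j\eta_{i,j}=\eta_i$ already proved; the diameter bound is inherited from the original family. For the last assertion, double-summation gives $\sum_{i,j}\nu_{i,j}=\sum_j\nu_j'=\nu$ and $\sum_{i,j}\eta_{i,j}=\eta$, the supports of each pair still lie in $\supp(\nu_j')\cup\supp(\eta_j')$ of diameter $\le\vae$, and
$$\sum_{i,j}\bigl|\|\nu_{i,j}\|-\|\eta_{i,j}\|\bigr|=\sum_j\delta_j=\sum_j\bigl|\|\nu_j'\|-\|\eta_j'\|\bigr|\leq\delta,$$
so $(\nu_{i,j},\eta_{i,j})_{i,j\in\N}\in\Gamma_{\vae,\delta}(\nu,\eta)$, completing the proof.
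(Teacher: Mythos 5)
Your argument follows the paper's construction almost verbatim: same density-based definition of $\eta_{i,j}=\phi_i\phi_j'/\phi\cdot\mu$, same proportional splitting $\nu_{i,j}=\tfrac{\|\eta_{i,j}\|}{\|\eta_j'\|}\nu_j'$, and the same ``all differences have the same sign'' observation to get $\sum_i|\|\nu_{i,j}\|-\|\eta_{i,j}\||=\delta_j$. The second and third assertions are verified exactly as in the paper.

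There is, however, one genuine (if small) gap: your handling of the degenerate case $\eta_j'=0$. You assume $\|\nu_j'\|\le\|\eta_j'\|$ ``by symmetry'', and within that case conclude $\|\eta_j'\|=0\Rightarrow\|\nu_j'\|=0$, then set $\nu_{i,j}=0$. But the sign case split is not actually symmetric here, because the pieces $\eta_{i,j}$ have already been fixed and cannot be swapped with the $\nu$-side. The proportional formula works for $\|\eta_j'\|>0$ regardless of which of $\|\nu_j'\|,\|\eta_j'\|$ is larger, so the sign split buys you nothing; what you do need to handle separately is $\eta_j'=0$, and there is \emph{no} reason that $\nu_j'=0$ then -- the definition of $\Gamma_{\vae,\delta}$ only constrains $\sum_j\bigl|\|\nu_j'\|-\|\eta_j'\|\bigr|\le\delta$. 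If $\eta_j'=0\ne\nu_j'$ and you set all $\nu_{i,j}=0$, you lose $\sum_i\nu_{i,j}=\nu_j'$ and the whole decomposition of $\nu$ fails. The fix is the one the paper uses: for $\eta_j'=0$, put $\nu_{1,j}=\nu_j'$ and $\nu_{i,j}=0$ for $i>1$, which still gives $\sum_i\bigl|\|\nu_{i,j}\|-\|\eta_{i,j}\|\bigr|=\|\nu_j'\|=\delta_j$ and preserves the support bound.
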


\begin{proof}
If we let $\phi,\phi_i$ and $\phi_j'$ denote the densities of $\eta,\eta_i$ and $\eta_j'$ with respect to $\mu$ respectively and define $\eta_{i,j} = \phi_{i,j}\mu$ where
$$\phi_{i,j}(x)=\left\{\begin{array}{ll} \frac{\phi_i(x)\phi_j'(x)}{\phi(x)} & \phi(x)>0,\\ 0 & \phi(x)=0,\end{array}\right.$$
then 
$$\eta_i = \sum_{j=1}^{\infty} \eta_{i,j} \quad \textrm{and} \quad \eta_j' = \sum_ {i=1}^{\infty} \eta_{i,j}.$$ 
We have $\|\eta_j'\| - \delta_j \leq \|\nu_j'\|\leq \|\eta_j'\| + \delta_j$, so $\sum_{j=1}^{\infty} \delta_j \leq \delta$, and we will now divide each $\nu_j'$ into pieces $\nu_{i,j}$ such that  
$$\nu_j'=\sum_{i=1}^{\infty} \nu_{i,j} \quad \textrm{and}  \quad \sum_{i=1}^{\infty} \bigl| \| \nu_{i,j} \| - \| \eta_{i,j} \| \bigr| \leq \delta_j,$$
and then verify that these measures satisfies the other required properties of the lemma.

Let $I=\{j \in \N: \eta_j'=0\}$. If $j \in I$ then we have $\eta_{i,j}=0$ as well for all $i$, and we simply let $\nu_{1,j}=\nu_j'$ and $\nu_{i,j}=0$ for $i>1$. 

In case $j \in \N \setminus I$ then put
$$\alpha_j=\|\nu_j'\|/\|\eta_j'\|$$
and
$$\nu_{i,j}=\frac{\|\eta_{i,j}\|}{\|\eta_j'\|}\nu_j'.$$
Since $\nu_{i,j} \leq \nu_j'$ it belongs to $\M$. It is also clear that
$$\sum_{i=1}^{\infty} \nu_{i,j} = \left(\sum_{i=1}^{\infty} \frac{\|\eta_{i,j}\|}{\|\eta_j'\|}\right)\nu_j' = \nu_j'. $$
 
Since for any $i,j \in \N$ we have $\supp(\eta_{i,j}) \subset \supp(\eta_j')$ and $\supp(\nu_{i,j}) \subset \supp(\nu_j')$
it is clear that ${\rm diam}(\supp(\eta_{i,j}) \cup \supp(\nu_{i,j})) < \vae$.

For $j \in \N \setminus I$ we have
$$|\alpha_j-1| =\left|\frac{\|\nu_j'\|}{\|\eta_j'\|}-1\right| = \frac{\delta_j}{\|\eta_j'\|}.$$
In case $j \in I$, then 
$$\sum_{i=1}^{\infty} \left| \|\nu_{i,j}\| - \|\eta_{i,j}\|\right| = \|\nu_j'\| = \left| \|\nu_j'\| - \|\eta_j'\|\right| = \delta_j.$$
In case $j \in \N \setminus I,$ then
$$\sum_{i=1}^{\infty} \left| \|\nu_{i,j}\| - \|\eta_{i,j}\|\right| = \sum_{i=1}^{\infty} \left| (\alpha_j - 1)\|\eta_{i,j}\|\right| = |\alpha_j -1 |\|\eta_j'\| = \left| \|\nu_j'\|- \|\eta_j'\|\right| = \delta_j.$$
Hence we see that
$$(\nu_{i,j},\eta_{i,j})_{i \in \N} \in \Gamma_{\vae,\delta_j}(\nu_j',\eta_j').$$

The second statement also follows trivially by definition from the above statement about the supports of the measures. To prove the final claim we have
\begin{align*}
&\sum_{i=1}^{\infty}\bigl| \|\nu_i\|-\|\eta_i\|\bigr|= \sum_{i=1}^{\infty}\left| \sum_{j=1}^{\infty}\|\nu_{i,j}\| - \sum_{j=1}^{\infty} \|\eta_{i,j}\|\right| \leq \sum_{j=1}^{\infty} \sum_{i=1}^{\infty}\bigl| \| \nu_{i,j} \| - \| \eta_{i,j} \| \bigr|\\
&= \sum_{j=1}^{\infty} \delta_j \leq \delta. 
\end{align*}
\end{proof}

\begin{theorem}
$(\M,d_{\M})$ is a metric space.
\end{theorem}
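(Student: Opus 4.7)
The plan is to verify the four metric axioms for $d_{\M}$; non-negativity is immediate and finiteness was noted in the remark following the definition. Symmetry is manifest because $(\nu_i,\eta_i) \in \Gamma_{\vae,\delta}(\nu,\eta)$ iff $(\eta_i,\nu_i) \in \Gamma_{\vae,\delta}(\eta,\nu)$. For $d_{\M}(\nu,\nu)=0$, given any $\vae>0$ I would cover the compact set $\supp(\nu)$ by finitely many balls of radius $\vae/2$ and partition $\nu$ into disjointly supported pieces $\nu_i$ sitting in these balls; then $(\nu_i,\nu_i)_i \in \Gamma_{\vae,0}(\nu,\nu) \subset \Gamma_\vae(\nu,\nu)$, so $d_{\M}(\nu,\nu) \le \vae$ for every $\vae>0$.

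For the implication $d_{\M}(\nu,\eta) = 0 \Rightarrow \nu = \eta$, I would test both measures against continuous compactly supported functions. Given $\vae>0$ and $(\nu_i,\eta_i)_i \in \Gamma_\vae(\nu,\eta)$, pick for every non-trivial pair a point $x_i \in \supp(\nu_i) \cup \supp(\eta_i)$; since the diameter of this set is at most $\vae$, for any continuous compactly supported $\phi$ with modulus of continuity $\omega_\phi$ on a compact neighbourhood of $\supp(\nu) \cup \supp(\eta)$ the estimate
\[
\left|\int \phi\, d\nu - \int \phi\, d\eta\right| \leq \sum_i\bigl[\omega_\phi(\vae)(\|\nu_i\|+\|\eta_i\|) + \|\phi\|_\infty\bigl|\|\nu_i\|-\|\eta_i\|\bigr|\bigr] \leq \omega_\phi(\vae)(\|\nu\|+\|\eta\|) + \|\phi\|_\infty h(\vae)
\]
holds. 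Letting $\vae \to 0$ forces the two integrals to coincide, and since $\nu,\eta$ are Radon measures on the locally compact separable metric space $X$, this yields $\nu = \eta$.

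The main obstacle is the triangle inequality. Suppose $d_{\M}(\nu,\rho) < \vae_1$ and $d_{\M}(\rho,\eta) < \vae_2$, and pick $(a_j,\beta_j)_j \in \Gamma_{\vae_1}(\nu,\rho)$ and $(\gamma_i,d_i)_i \in \Gamma_{\vae_2}(\rho,\eta)$; the two decompositions of $\rho$ must be reconciled. Applying Lemma \ref{split} to $(a_j,\beta_j)_j$ with the extra decomposition $\rho = \sum_i \gamma_i$ yields a joint refinement $(\rho_{i,j})$ of the two decompositions of $\rho$ together with a compatible split $a_j = \sum_i a_{i,j}$ such that $\sum_i \rho_{i,j} = \beta_j$, $\sum_j \rho_{i,j} = \gamma_i$, and $(a_{i,j},\rho_{i,j})_{i,j} \in \Gamma_{\vae_1,h(\vae_1)}(\nu,\rho)$. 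For each fixed $i$ I then apply Lemma \ref{split} in its symmetric form (swapping the two arguments) to the trivial one-element decomposition $(\gamma_i,d_i) \in \Gamma_{\vae_2,|\|\gamma_i\|-\|d_i\||}(\gamma_i,d_i)$ with extra decomposition $\gamma_i = \sum_j \rho_{i,j}$, obtaining $d_i = \sum_j d_{i,j}$ with $\supp(d_{i,j}) \cup \supp(\rho_{i,j})$ of diameter at most $\vae_2$ and $\sum_{i,j}|\|d_{i,j}\|-\|\rho_{i,j}\|| \le h(\vae_2)$. The combined family $(a_{i,j},d_{i,j})_{i,j}$ decomposes $(\nu,\eta)$: whenever $\rho_{i,j} \ne 0$, any point of $\supp(\rho_{i,j})$ witnesses that $\supp(a_{i,j}) \cup \supp(d_{i,j})$ has diameter at most $\vae_1+\vae_2$; the degenerate pairs with $\rho_{i,j} = 0$ I would replace by two separate pairs $(a_{i,j},0)$ and $(0,d_{i,j})$ of diameters at most $\vae_1$ and $\vae_2$ respectively. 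The total mass defect is bounded via the triangle inequality and subadditivity of $h$ by
\[
\sum_{i,j}\bigl|\|a_{i,j}\|-\|\rho_{i,j}\|\bigr| + \sum_{i,j}\bigl|\|\rho_{i,j}\|-\|d_{i,j}\|\bigr| \le h(\vae_1)+h(\vae_2) \le h(\vae_1+\vae_2),
\]
so the resulting decomposition lies in $\Gamma_{\vae_1+\vae_2}(\nu,\eta)$ and $d_{\M}(\nu,\eta) \le \vae_1+\vae_2$. Taking the infimum over $\vae_1,\vae_2$ yields the triangle inequality.
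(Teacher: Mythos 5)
Your proof is correct and follows essentially the same route as the paper: the non-triangle axioms are handled identically, and the triangle inequality is established by applying Lemma~\ref{split} twice to build a common refinement of the two decompositions through the intermediate measure~$\rho$. The only cosmetic difference is in the treatment of the indices with $\rho_{i,j}=0$: you split each such pair into $(a_{i,j},0)$ and $(0,d_{i,j})$, whereas the paper collects the leftover mass and invokes Lemma~\ref{metlem}; both resolutions are valid and yield the same bound $h(\vae_1)+h(\vae_2)\le h(\vae_1+\vae_2)$.
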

\begin{proof}
To prove that $d_{\M}(\eta,\eta)=0$ for every $\eta \in \M$ let $\vae >0$ and cover $\supp(\eta)$ by finitely many balls 
$$B_1,B_2,\ldots,B_k$$
with radius at most $\varepsilon/2$, put 
$$\eta_i = \left\{\begin{array}{ll} \eta|_{B_1} & i=1 \\ \eta|_{B_i \setminus (\bigcup_{j=1}^{i-1}B_i)} & 1<i \leq k \\ 0 & i>k\end{array}\right..$$
Then $(\eta_i,\eta_i) \in \Gamma_{\vae}(\eta,\eta)$ so $d_{\M}(\eta,\eta) \leq \varepsilon$.

To prove that $d_{\M}(\eta,\nu)=0 \Rightarrow \eta=\nu$ it is enough to show that $\int f \, d \eta = \int f \, d\nu$ for every uniformly continuous function $f$ with values in $[0,1]$. So given $\varepsilon > 0$ we may choose $\delta \in (0,\vae)$ such that
$$\inf_{B(x,\delta)}f \geq \sup_{B(x,\delta)} f -\varepsilon \quad \forall x \in X.$$
Then we may by assumption choose $(\nu_i,\eta_i) \in \Gamma_{\delta}(\nu,\eta)$ and we get
 with $A_i =\supp(\nu_i)\cup \supp(\eta_i)$
\begin{align*}
&\left|\int f \, d \nu - \int f \, d\eta \right| =\left| \sum_{i=1}^{\infty} \left(\int f d \nu_i- \int f \, d\eta_i\right)\right| \\
&=\left|\sum_{i=1}^{\infty} \left(\sup_{A_i} f\|\nu_i\| + \int (f - \sup_{A_i}f) d \nu_i -\sup_{A_i}f\|\eta_i\|- \int (f-\sup_{A_i} f ) \, d\eta_i\right)\right|\\
&\leq\sum_{i=1}^{\infty} \sup_{A_i}f \bigl| \| \nu_i \| - \| \eta_i \|\bigr| + \sum_{i=1}^{\infty} \varepsilon (\|\nu_i\|+\|\eta_i\|) \\
&\leq\sum_{i=1}^{\infty} \bigl| \| \nu_i \| - \| \eta_i \| \bigr| + \varepsilon(\|\nu|+\|\eta\|) \leq h(\delta) +  \varepsilon(\|\nu|+\|\eta\|) \leq  h(\vae) +  \varepsilon(\|\nu|+\|\eta\|).
\end{align*} 
That $d_{\M}(\nu,\eta)=d_{\M}(\eta,\nu)$ is obvious, so it remains to prove the triangle inequality. Suppose therefore that $\rho,\xi,\tau \in \M$ with $$d_{\M}(\rho,\xi) < \vae_1 \quad \textrm{and} \quad d_{\M}(\xi,\tau) < \vae_2.$$
By the definition of the metric $d_{\M}$ there are
\begin{align*}
(\rho_i,\xi_i) \in \Gamma_{\vae_1}(\rho,\xi),\\
(\xi_j',\tau_j') \in \Gamma_{\vae_2}(\xi,\tau).
\end{align*}
By Lemma \ref{split} applied to $\nu = \tau$, $\eta=\xi$, $\eta_i = \xi_i$ and  $\delta_j = \left|\|\xi_j'\| - \|\tau_j'\|\right|$ we may now find
$$(\xi_{i,j},\tau_{i,j})\in \Gamma_{\vae_2,\delta_j}(\xi_j',\tau_j') \textrm{ for all } j \in \N$$
such that
$$\xi_i = \sum_{j=1}^{\infty} \xi_{i,j} \textrm{ for all } i \in \N.$$

We will now apply Lemma \ref{split} again, but this time for each $k$ applied to $\nu=\rho_k$, $\eta=\xi_k$, $\eta_i= \xi_{k,i}$ so that $\eta = \sum_{i=1}^{\infty} \eta_i$. If we let
$$\nu_j' = \left\{ \begin{array}{ll} \nu & j=1\\0 & j >1\end{array}\right., \quad \eta_j' = \left\{ \begin{array}{ll} \eta & j=1\\0 & j >1\end{array}\right.$$
and
$$\delta_j = \left|\|\nu_j'\|- \|\eta_j'\|\right|,$$
then $(\nu_j',\eta_j')_{j \in \N} \in \Gamma_{\vae_1,\delta_1}(\rho_i,\xi_i)$ by definition. Now we let $\rho_{k,i} =\nu_i$, where $\nu_i$ is as in Lemma \ref{split} and we get, since $\nu_i \leq \nu$,  
$${\rm diam}(\supp(\rho_{i,j}) \cup \supp(\xi_{i,j})) \leq {\rm diam}(\supp(\rho_i) \cup \supp(\xi_i)) \leq \vae_1 \textrm{ for all } j \in \N$$
and
$$\sum_{i,j=1}^{\infty} \bigl| \| \rho_{i,j} \| - \| \xi_{i,j} \| \bigr| \leq h(\vae_1).$$ 
Furthermore by construction for every $i,j$ such that $\xi_{i,j} \ne 0$ we have  
$$\textrm{diam}(\supp(\rho_{i,j}) \cup \supp(\tau_{i,j})) \leq \vae_1 + \vae_2.$$
If we let $I=\{i,j:  \xi_{i,j} \ne 0\}$ then we get
\begin{align*}
&\sum_{i,j \in I}  \bigl| \| \rho_{i,j} \| - \| \tau_{i,j} \| \bigr| + \sum_{i,j \not \in I} \|\rho_{i,j}\|+\sum_{i,j \not \in I} \|\tau_{i,j}\|  \\
&\leq \sum_{i,j \in I} \bigl| \|\rho_{i,j} \| - \| \xi_{i,j} \|\bigr| + \sum_{i,j \not \in I} \|\rho_{i,j}\| + \sum_{i,j \in I}\bigl| \| \xi_{i,j} \| - \| \tau_{i,j} \| \bigr| +\sum_{i,j \not \in I} \|\tau_{i,j}\|  \\
&\leq h(\vae_1)+h(\vae_2) \leq h(\vae_1+\vae_2).
\end{align*}
From this it follows from Lemma \ref{metlem} that indeed 
$d_{\M}(\rho,\tau) < \vae_1+\vae_2$, so we have proved the triangle inequality.
\end{proof}
\begin{proposition}
Suppose $K \subset X$ is compact, and that the measures $\eta_i,\eta \in \M$ where all $\eta_i$ have support in $K$. Then $\eta_i \rightarrow  \eta$ in $\M$ if and only if $\eta_i \rightharpoonup \eta$ weak$^*$. In particular the set 
$$\K = \{\eta \in \M : {\rm supp}(\eta) \subset K\}$$
is a compact subset of $\M$.
\end{proposition}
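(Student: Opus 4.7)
The plan is to prove that $\eta_i \to \eta$ in $(\M, d_{\M})$ if and only if $\eta_i \rightharpoonup \eta$ weak$^*$ for measures with supports in the fixed compact set $K$, and then deduce compactness of $\K$ via weak$^*$-compactness on bounded sets.

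For the forward direction, I would reuse the estimate already derived in the proof that $d_{\M}(\nu,\eta) = 0 \Rightarrow \nu = \eta$. That computation yields, for any uniformly continuous $f : X \to [0,1]$ whose modulus of continuity is controlled by $\vae_0$ on balls of diameter $\vae_0$, the bound
$$\left| \int f\,d\nu - \int f\,d\eta \right| \leq h(\vae_0) + \vae_0\bigl(\|\nu\|+\|\eta\|\bigr)$$
whenever $d_{\M}(\nu,\eta) < \vae_0$. Any $f \in C_c(X)$ is bounded and uniformly continuous; splitting $f = f^+ - f^-$ and rescaling into $[0,1]$ gives the same bound scaled by $\|f\|_\infty$. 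Since $\|\eta_i\|, \|\eta\|$ are uniformly bounded by $\mu(K)$, letting $i \to \infty$ (so $d_{\M}(\eta_i,\eta) \to 0$) and then $\vae_0 \to 0$ shows $\int f\,d\eta_i \to \int f\,d\eta$ for every $f \in C_c(X)$, which is weak$^*$-convergence.

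For the converse, assume $\eta_i \rightharpoonup \eta$ weak$^*$; testing against bump functions supported outside $K$ shows $\supp(\eta) \subset K$ as well. Fix $\vae > 0$, cover the compact set $K$ by finitely many open balls $B(x_1,r_1), \ldots, B(x_m,r_m)$ with each $r_k < \vae/2$, and disjointify to form the Borel partition
$$B_k' = \Bigl(B(x_k,r_k) \setminus \bigcup_{j<k} B(x_j,r_j)\Bigr) \cap K$$
of $K$ into sets of diameter less than $\vae$. Writing $\eta_i = \phi_i\mu$ and $\eta = \phi\mu$ with $\phi_i,\phi : X \to [0,1]$ supported in $K$, the remark on densities gives $\phi_i \to \phi$ weak$^*$ in $L^\infty(\mu)$; testing against $\mathbf{1}_{B_k'} \in L^1(\mu)$ then produces $\eta_i(B_k') \to \eta(B_k')$ for every $k$. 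For $i$ large enough $\sum_{k=1}^m |\eta_i(B_k') - \eta(B_k')| < h(\vae)$, and setting $\nu_k = \eta_i|_{B_k'}$, $\tilde\eta_k = \eta|_{B_k'}$ gives $(\nu_k,\tilde\eta_k)_{k=1}^m \in \Gamma_\vae(\eta_i,\eta)$, so $d_{\M}(\eta_i,\eta) \leq \vae$.

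For compactness of $\K$, I would proceed sequentially. Given $\{\eta_i\}\subset \K$, the densities $\phi_i = d\eta_i/d\mu$ lie in the unit ball of $L^\infty(\mu)$ and vanish outside $K$. Since $\mu$ is Radon on a separable locally compact metric space, $L^1(\mu)$ is separable, so the weak$^*$-topology on norm-bounded subsets of $L^\infty(\mu)$ is metrizable and sequentially compact; extract a subsequence $\phi_{i_j} \to \phi$ weak$^*$. The limit satisfies $0 \leq \phi \leq 1$ a.e.\ and vanishes outside $K$, so $\eta := \phi\mu \in \K$, and by the equivalence above $\eta_{i_j} \to \eta$ in $d_{\M}$. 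The main technical obstacle is the mass-matching step in the converse direction: weak$^*$-convergence of measures does not in general give $\eta_i(B) \to \eta(B)$ for arbitrary Borel $B$ (the Portmanteau theorem would require $B$ to be an $\eta$-continuity set, which in turn would force one to select radii $r_k$ avoiding an $\eta$-mass-carrying sphere). Routing through densities cleanly sidesteps this, because indicators of Borel sets of finite $\mu$-measure are honest $L^1(\mu)$-test functions and weak$^*$-convergence in $L^\infty(\mu)$ immediately supplies the required convergence on every piece of the partition.
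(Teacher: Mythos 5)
Your proof is correct, but it diverges from the paper's argument at both key steps, and the divergences are interesting.

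For the direction $d_{\M}\Rightarrow$ weak$^*$, the paper argues indirectly: it proves the other implication first, then notes that $\K$ is weak$^*$-compact, extracts weak$^*$-convergent subsequences, and uses uniqueness of $d_{\M}$-limits to identify every subsequential weak$^*$ limit with $\eta$. You instead recycle the quantitative bound from the proof that $d_{\M}(\nu,\eta)=0\Rightarrow\nu=\eta$, which for a fixed uniformly continuous $f\in C_c(X)$ gives $\bigl|\int f\,d\nu-\int f\,d\eta\bigr|\leq h(\delta)+\omega_f(\delta)(\|\nu\|+\|\eta\|)$ whenever $d_{\M}(\nu,\eta)<\delta$ (one should write $\omega_f(\delta)$, the modulus of continuity of $f$ at scale $\delta$, rather than $\delta$ itself, but since $\omega_f(\delta)\to 0$ with $\delta$ the conclusion is unchanged). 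This is more direct and self-contained, and avoids invoking weak$^*$-compactness at this stage.

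For the direction weak$^*\Rightarrow d_{\M}$, the paper builds a \emph{continuous} partition of unity $f_1,\dots,f_k$ subordinate to small balls covering $K$ and sets $\eta_{i,j}=f_j\eta_i$; the mass-matching $\|\eta_{i,j}\|=\int f_j\,d\eta_i\to\int f_j\,d\eta_\infty$ then follows immediately from weak$^*$ convergence of measures, since $f_j$ is a continuous test function. You instead take a \emph{disjoint} Borel partition of $K$ into small pieces $B_k'$ and test against their indicators. As you correctly observe, this would fail under Portmanteau alone (one would need $\eta$-continuity sets), so you upgrade to weak$^*$ convergence of densities in $L^\infty(\mu)$ via the paper's remark that for sequences in $\M$ the two notions coincide. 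That remark is true (uniform $L^\infty$-bound on the densities plus density of $C_c(X)$ in $L^1(\mu)$), but it is stated without proof in the paper, so your argument carries one more unverified dependency than the paper's, which works entirely with continuous test functions and thus never needs to pass from $C_c$ to $L^1$. The deduction of compactness of $\K$ is essentially the same in both.
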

\begin{remark}
In particular, in case $X$ is compact then so is $\M$, and hence it is complete.
In case $X$ is not compact, then the space $(\M,d_{\M})$ is not even complete. To explain why let $0 \leq \phi(x) \leq 1$ for all $x \in X$ and $\int \phi \, d \mu < \infty$, but such that $\phi$ does not have compact support. Then we may define $\eta_n= \phi \mu|_{B(0,n)}$. Each $\eta_n$ belongs to $\M$, and it is easy to see that it is a Cauchy sequence in $\M$. But of-course it does not converge to an element in  $\M$. This is in a sense the price we pay to require that all our elements in $\M$ should have compact support. However as we will see in the next section this is not an issue for rectifiable curves, and hence this will not be an actual problem for us.

Furthermore note that since convergence in $\M$ by the above implies weak$^*$ convergence of the densities in $L^{\infty}(\mu)$ this implies that if $\eta_i \rightarrow \eta$ in  $\M$, then 
$$\int f \,d\eta_i \rightarrow \int f\, d\eta \textrm{ for every } f \in L^1_{\rm loc} (X).$$  
\end{remark}
\begin{proof}
Assume that $\eta_i$ converges to $\eta=\eta_{\infty}$ in the weak$^*$-topology.
We will now prove that $\eta_i$ converges to the measure $\eta_{\infty}$ in $d_{\M}$.
Given $\vae >0$ we may cover $K$ by finitely many balls $B(x_1,\vae/2),B(x_2,\vae/2),\ldots,B(x_k,\vae/2)$. 
Choose a partition of unity $f_1,f_2,\ldots,f_k$ of continuous functions such that 
$$\sum_{i=1}^k f_i(x) =1 \quad \forall x \in \bigcup_{i=1}^k B(x_i,\vae/2),$$
$$0 \leq f_i \leq 1 \quad \forall i\in \{1,2,\ldots,k\}$$
and
$$\supp(f_i) \subset B(x_i,\vae) \quad \forall i\in \{1,2,\ldots,k\}.$$ 
Now we define for each $i,j$ the measures $\eta_{i,j} = f_j \eta_i$, and conclude that 
$$d_{\M}(\eta_{\infty},\eta_i) \leq \max \left\{ h^{-1}\left({\sum_{j=1}^k  \bigl| \|\eta_{i,j}\| - \| \eta_{\infty,j} \|\bigr|}\right), \vae \right\},$$
and since the first factor goes to zero as $i \rightarrow \infty$ we get the statement.

In case $\eta_i$ converges to $\eta$ in $(\M,d_{\M})$ and $\supp(\eta_i) \subset K$ for each $i$, then for any $\vae >0$ we get $\eta(K^c) < h(\vae),$ and hence $\eta(K^c)=0$. So $\supp(\eta) \subset K$. Furthermore, by the above argument, if a subsequence converges in the weak$^*$ topology, then the limit must be $\eta$, and hence we also get the opposite direction. (Note also that $\{\nu \in \M: \supp(\nu) \subset K\}$ forms a compact subset under the weak$^*$ topology, and hence any sequence in this set contains a convergent subsequence.)
\end{proof}
\begin{proposition}\label{splitcor}
Suppose $\eta^1,\eta^2,\ldots,\eta^N$ belongs to $\M$ and satisfies $d_{\M}(\eta^{k+1},\eta^k) \leq \vae_k$ for each $k =1,\ldots,N-1$. Let  $r \in \{1,2,\ldots,N\}$ and suppose that  
$$\eta^r = \sum_{i=1}^{\infty} \gamma_i^r$$
where each $\gamma_i^r \in \M$. Then there are measures $\gamma_i^k$ in $\M$, $k \in  \{1,2,\ldots,N\} \setminus \{r\}$, such that  
\begin{itemize}
\item[(1)] $\eta^k = \sum_{i=1}^{\infty} \gamma_i^k \textrm{ for all } k \in \{1,2, \ldots,N\}$,
\item[(2)] $d_{\M}(\gamma_i^{k+1},\gamma_i^k) \leq \vae_k \textrm{ for all } k \in \{1,2,\ldots,N-1\},\, i \in \N$
\item[(3)] $\sum_{i=1}^{\infty}\bigl| \|\gamma_i^{k+1}\|- \| \gamma_i^k \| \bigr| \leq h(\vae_k) \textrm{ for all } k \in \{1,2,\ldots,N-1\}.$
\end{itemize}
\end{proposition}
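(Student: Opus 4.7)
\medskip

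\textbf{Proof plan.} The natural approach is iterated application of Lemma~\ref{split} along the chain. Since the hypotheses are symmetric under reversal of indices (replace $\eta^k$ by $\eta^{N+1-k}$ and relabel the $\vae_k$), the two-sided extension reduces to propagating the given decomposition outwards from $\eta^r$ in one direction at a time; so it suffices to describe the forward case, constructing $\gamma_i^k$ inductively for $k=r+1,r+2,\ldots,N$.

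The single forward step $k \to k+1$ goes as follows. Assuming $\gamma_i^k$ is at hand with $\sum_i\gamma_i^k=\eta^k$, pick any $\vae>\vae_k$. Since $d_{\M}(\eta^{k+1},\eta^k)\leq\vae_k<\vae$, the definition of $d_{\M}$ supplies some $(\nu_j',\eta_j')\in\Gamma_{\vae}(\eta^{k+1},\eta^k)$, and Lemma~\ref{split} applied with $\nu=\eta^{k+1}$, $\eta=\eta^k$, $\delta=h(\vae)$, and input decomposition $\eta_i=\gamma_i^k$ yields a refinement $\nu_{i,j},\eta_{i,j}$. Setting $\gamma_i^{k+1,(\vae)}:=\nu_i=\sum_j\nu_{i,j}$, the lemma delivers $\sum_i\gamma_i^{k+1,(\vae)}=\eta^{k+1}$ together with $(\nu_{i,j},\eta_{i,j})_j\in\Gamma_{\vae,\rho_i}(\gamma_i^{k+1,(\vae)},\gamma_i^k)$ where $\sum_i\rho_i\leq h(\vae)$; each $\rho_i$ is thus $\leq h(\vae)$, forcing $d_{\M}(\gamma_i^{k+1,(\vae)},\gamma_i^k)\leq\vae$ and $\sum_i\bigl|\,\|\gamma_i^{k+1,(\vae)}\|-\|\gamma_i^k\|\,\bigr|\leq h(\vae)$. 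Iterating this up the chain along a parameter choice $\vae_k^{(n)}\downarrow\vae_k$ furnishes, for every $n$, families $(\gamma_i^{k,(n)})_{i,k}$ verifying properties (1)--(3) with $\vae_k$ replaced by $\vae_k^{(n)}$.

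To promote these to the sharp bounds at $\vae_k$, observe that each $\gamma_i^{k,(n)}\leq\eta^k$ lies in the $d_{\M}$-compact set $\{\nu\in\M:\supp(\nu)\subset\supp(\eta^k)\}$ furnished by the compactness proposition preceding this one. A diagonal extraction over the countable index set $\{(i,k):i\in\N,\,r<k\leq N\}$ yields a subsequence $(n_m)$ along which $\gamma_i^{k,(n_m)}\to\gamma_i^k$ in $\M$ for every $(i,k)$. Continuity of $d_{\M}$ then gives (2) at $\vae_k$, while Fatou applied to the bound of (3), combined with continuity of $h$ and mass-convergence under $d_{\M}$-convergence (the first consequence of the theorem following the definition of $d_{\M}$), gives (3) at $\vae_k$.

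The main obstacle is verifying (1), i.e., that $\sum_i\gamma_i^k=\eta^k$ survives the limit. The inequality $\sum_i\gamma_i^k\leq\eta^k$ is immediate from weak-$*$ lower semicontinuity applied to truncated partial sums $\sum_{i\leq M}\gamma_i^{k,(n_m)}\to\sum_{i\leq M}\gamma_i^k$ and then sending $M\to\infty$. The reverse inequality requires ruling out escape of mass to arbitrarily large indices $i$ in the limit. The lever is the telescoping bound obtained by summing (3) across the chain,
\[
\sum_{i=1}^{\infty}\Bigl|\,\|\gamma_i^{k,(n)}\|-\|\gamma_i^r\|\,\Bigr|\ \leq\ \sum_{j=r}^{k-1}h\bigl(\vae_j^{(n)}\bigr),
\]
which exhibits $(\|\gamma_i^{k,(n)}\|)_i$ as an $\ell^1$-perturbation of the fixed summable sequence $(\|\gamma_i^r\|)_i$, uniformly in $n$. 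Together with the exact identity $\sum_i\|\gamma_i^{k,(n)}\|=\|\eta^k\|$ (valid at every stage of the iteration, since the construction in Lemma~\ref{split} preserves total mass) and with a careful diagonal choice of the approximating parameters $\vae_j^{(n)}$, this is the control that should allow one to pass the mass identity to the limit and conclude $\sum_i\|\gamma_i^k\|=\|\eta^k\|$; combined with $\sum_i\gamma_i^k\leq\eta^k$ this forces equality as measures and establishes (1).
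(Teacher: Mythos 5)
Your overall plan---reduce to a single step of the chain, run Lemma~\ref{split} with a strictly larger tolerance $\vae>\vae_k$, and then extract a compactness limit as the tolerance shrinks---is in the spirit of the paper's proof. But you carry the \emph{entire countable family} $(\gamma_i^k)_{i\in\N}$ through each limiting step, and this creates a gap that you flag but do not close.

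The problem is exactly the one you identify in (1). After the diagonal extraction you know that $\gamma_i^{k,(n_m)}\to\gamma_i^k$ for each fixed $i$, and that $\sum_i\gamma_i^{k,(n)}=\eta^k$ for every $n$, but these two facts together do \emph{not} yield $\sum_i\gamma_i^k=\eta^k$: mass can escape along the index $i$ as $n\to\infty$ (the model situation is $\gamma_i^{(n)}=\eta\,\delta_{i,n}$, where every $\gamma_i^{(n)}\to0$ yet $\sum_i\gamma_i^{(n)}=\eta$). Your proposed lever is the telescoping estimate
$\sum_i \bigl|\,\|\gamma_i^{k,(n)}\|-\|\gamma_i^r\|\,\bigr|\le \sum_{j} h(\vae_j^{(n)})$, which gives a bound on the $\ell^1$-norm of the perturbation that is \emph{uniform in $n$}, but uniform $\ell^1$-boundedness is not a uniform tail estimate: it does not imply that for every $\varepsilon>0$ there is an $M$ with $\sum_{i>M}\bigl|\,\|\gamma_i^{k,(n)}\|-\|\gamma_i^r\|\,\bigr|<\varepsilon$ for \emph{all} $n$, and without that, Fatou only gives $\sum_i\|\gamma_i^k\|\le\|\eta^k\|$. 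Since $\sum_j h(\vae_j^{(n)})\to\sum_j h(\vae_j)$, a finite number that need not be small, the bound never tends to zero either. So ``this is the control that should allow one to pass the mass identity to the limit'' is precisely the step that is missing, and it is not a routine verification.

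The paper avoids this by a different organization of the induction. It reduces to $N=2$ and then \emph{peels off one piece at a time}, always working with a finite (in fact two-part) decomposition $\eta^1=\gamma_1^1+(\eta^1-\gamma_1^1)$, $\eta^2=\gamma_1^2+\nu^2$: it takes the weak-$*$ compactness limit in $n$ for only these two measures, records the residual mass quantity $\bigl|\,\|\gamma_1^1\|-\|\gamma_1^2\|\,\bigr|$, and then recurses into the remainder with the correspondingly reduced budget $h(\vae_1)-\bigl|\,\|\gamma_1^1\|-\|\gamma_1^2\|\,\bigr|$. At every stage the decomposition is finite, so there is no countable-index mass escape, and the bookkeeping shows the eventual remainder $\rho=\eta^2-\sum_i\gamma_i^2$ has total mass at most $h(\vae_1)$ and can be absorbed into one of the $\gamma_i^2$ via Lemma~\ref{metlem}. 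To repair your argument you would need to either supply a genuine uniform-integrability argument (which the telescoping bound does not give on its own) or restructure the limit to operate on finitely many pieces at a time as the paper does.
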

\begin{remark}
In case $N=2$ and we have strict inequalities this is a special case of Lemma \ref{split}.
\end{remark}
\begin{proof}
It is enough to prove this for the case $N=2$, since then we may simply iterate this result. By symmetry in this case we can also without loss assume that $r=1$.
Let $n \in N$ and choose
$$(\nu_j',\eta_j') \in \Gamma_{\vae_1 +1/n}(\nu,\eta).$$ 
Now apply Lemma \ref{split} to 
\begin{align*}
& \eta=\eta^1,\\
& \nu=\eta^2,\\
& \eta_i = \left\{ \begin{array}{ll} \gamma_1^1 & i=1 \\ \sum_{j=2}^{\infty}\gamma_j^1 & i=2 \\ 0 & i>2\end{array}\right.,
\end{align*}
to get measures
$\gamma_1^2(n),\nu^2(n)$ (with the notation of Lemma \ref{split} $\gamma_1^2(n) = \nu_1$ and $\nu^2(n) = \nu-\nu_1$) such that $\gamma_1^2(n)+\nu^2(n)=\eta^2$ and:
\begin{align*}
&d_{\M}(\gamma_1^1,\gamma_1^2(n)) <\vae_1+1/n,\\
&d_{\M}(\nu^1,\nu^2(n)) <\vae_1+1/n,\\
&\bigl| \|\gamma_1^1\|-\|\gamma_1^2(n)\|\bigr| + \bigl|\|\nu^1\|-\|\nu^2(n)\|\bigr| < h(\vae_1 +1/n).
\end{align*}
If we do this for each $n$ we get a sequence of measures, and since the measures $\gamma_1^2(n)$ and $\nu^2(n)$ all have support in the compact set ${\rm supp}(\eta^2)$ it follows that there is a sequence $n_1,n_2,\ldots$ such that both $\gamma_1^2(n_j) \rightarrow \gamma_1^2$ and $\nu^2(n_j) \rightarrow \nu^2$ as $j \rightarrow \infty$ for some measures $\gamma_1^2,\nu^2 \in \M$. It is clear that we still have $\gamma_1^2 + \nu^2 = \eta^2$, and that by construction
\begin{align*}
&d_{\M}(\gamma_1^1,\gamma_1^2) \leq \vae_1,\\
&d_{\M}(\nu^1,\nu^2) \leq \vae_1,\\
&\bigl|\|\gamma_1^1\|-\|\gamma_1^2(n)\|\bigr| + \bigl|\|\nu^1\|-\|\nu^2(n)\|\bigr| \leq h(\vae_1).
\end{align*}
In the next step we may apply the same construction to the measures $\nu^1$ and $\nu^2$, but this time within the class $\Gamma_{\vae_1,\delta}(\nu^1,\nu^2)$ where $\delta=h(\vae_1) - \bigl|\|\gamma_1^1\|-\|\gamma_1^2\|\bigr|$, to get our measure $\gamma_2^2$, and iterating this leads to a sequence of measures  $\gamma_i^2$ with the properties that for each $j$
\begin{align*}
&d_{\M}(\gamma_j^1,\gamma_j^2) \leq \vae_1,\\
& \sum_{i=1}^{j} \bigl| \|\gamma_i^1\|-\|\gamma_i^2\|\bigr| + \left| \bigl\|\eta^1-\sum_{i=1}^j \gamma_i^1\bigr\|-\bigl\|\eta^2 - \sum_{i=1}^j \gamma_i^2\bigr\|\right| \leq h(\vae_1).
\end{align*}
Hence we see that $\rho=\eta^2-\sum_{i=1}^{\infty} \gamma_i^2$ is an element in $\M$ with total mass not bigger than $h(\vae_1)$. In case it is not zero, we may by Lemma \ref{metlem} simply add it to any of the measures, say $\gamma_1^2$, and we get the required measures.

\end{proof}
An important principle for us will be how one can estimate distances in $\M$ in case one measure is given from another one trough a measure preserving map as follows.
\begin{theorem}\label{Hmap1}
Suppose the map $H : X \rightarrow X$ is a homeomorphism such that for any compact subset $K$ of $X$ we have $\mu(K)=c\mu(H^{-1}(K))$ where $c \in (0,1]$ is fixed, and that there is a number $t \geq 0$ such that 
$$d_{X}(x,H^{-1}(x)) \leq t \textrm{ for all } x \in X.$$
Suppose furthermore that $\phi : X \rightarrow [0,1]$ is measurable and that $({\rm supp}(\phi))_t$ is a compact subset of $X$. Then $c(\phi \circ H) \mu, \phi \mu \in \M$ and 
$$d_{\M}(c(\phi \circ H) \mu, \phi \mu ) \leq t.$$
\end{theorem}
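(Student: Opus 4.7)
The plan is to exhibit, for every $\vae > t$, an element of $\Gamma_\vae(c(\phi \circ H)\mu, \phi\mu)$, which will immediately give the distance bound. First I would dispense with the membership claim: the function $c(\phi \circ H)$ is $[0,1]$-valued and measurable (since $\phi$ is measurable and $H$ is continuous), and its support is contained in $H^{-1}(\supp(\phi))$, which by the displacement hypothesis $d_X(x,H^{-1}(x)) \leq t$ is contained in $(\supp(\phi))_t$, hence compact. So $c(\phi \circ H)\mu \in \M$, and of course $\phi \mu \in \M$.

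For the distance estimate, fix $\vae > t$, set $\delta = (\vae - t)/2 > 0$, and cover the compact set $(\supp(\phi))_t$ by finitely many balls of radius $\delta/2$. Disjointifying produces a finite Borel partition $E_1,\ldots,E_k$ of a set containing $(\supp(\phi))_t$ with each $\textrm{diam}(E_j) \leq \delta$. The key move is to index the decomposition by pairs $(i,j)$ and define
$$\eta_{i,j} = \phi\, \chi_{E_i \cap H(E_j)}\,\mu, \qquad \nu_{i,j} = c(\phi \circ H)\, \chi_{H^{-1}(E_i) \cap E_j}\,\mu,$$
so that $\supp(\eta_{i,j}) \subset \overline{E_i}$ and $\supp(\nu_{i,j}) \subset \overline{E_j}$, and both measures are dominated by $\mu$ with compact support. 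That $\sum_{i,j} \eta_{i,j} = \phi\mu$ and $\sum_{i,j} \nu_{i,j} = c(\phi \circ H)\mu$ follows because $\supp(\phi) \subset \bigcup_i E_i$ and $H^{-1}(\supp(\phi)) \subset \bigcup_j E_j$.

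The mass-balance $\|\eta_{i,j}\| = \|\nu_{i,j}\|$ is the crucial computation. The hypothesis $\mu(K) = c\mu(H^{-1}(K))$ on compact sets extends to all Borel sets by Borel regularity, so $H_{*}(c\mu) = \mu$, equivalently $\int f\,d\mu = c\int f \circ H\,d\mu$ for non-negative Borel $f$. Applied to $f = \phi\chi_{E_i \cap H(E_j)}$, the right-hand side is exactly $\|\nu_{i,j}\|$, yielding the identity. Thus the mass-discrepancy sum is zero and the $h(\vae)$ bound is vacuous. For the diameter: if $\eta_{i,j}$ (equivalently $\nu_{i,j}$) is non-zero, there exists $y \in E_j$ with $H(y) \in E_i$, and then for any $p \in \overline{E_i}$ and $q \in \overline{E_j}$,
$$d_X(p,q) \leq d_X(p,H(y)) + d_X(H(y),y) + d_X(y,q) \leq \textrm{diam}(E_i) + t + \textrm{diam}(E_j) \leq 2\delta + t = \vae.$$
Hence $(\nu_{i,j}, \eta_{i,j}) \in \Gamma_\vae(c(\phi \circ H)\mu, \phi\mu)$, which gives $d_\M(c(\phi \circ H)\mu, \phi\mu) \leq \vae$ and, letting $\vae \downarrow t$, the sharp bound $d_\M \leq t$.

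The main obstacle is finding the right decomposition. A naive single-indexed version with $\eta_i = \phi\chi_{E_i}\mu$ and $\nu_i = c(\phi\circ H)\chi_{H^{-1}(E_i)}\mu$ would force $\supp(\nu_i) \subset H^{-1}(E_i)$, whose diameter can only be bounded by $\textrm{diam}(E_i) + 2t$ (the triangle inequality between two preimages under $H$ forces two applications of the displacement bound), yielding only $d_\M \leq 2t$. The double-index refinement $E_i \cap H(E_j)$ localises each support into a set of diameter at most $\delta$, and the cross-distance between $\overline{E_i}$ and $\overline{E_j}$ goes through the single connecting pair $(y, H(y))$, using $d_X(y,H(y)) \leq t$ only once.
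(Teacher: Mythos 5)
Your proof is correct, and it takes a genuinely different route from the paper's. The paper's proof partitions only $\supp(\phi)$ into small pieces $A_i$ and pushes them through $H$, taking $\eta_i = c((\phi\chi_{A_i})\circ H)\mu$ and $\nu_i = (\phi\chi_{A_i})\mu$, so that $\supp(\eta_i) \subset H^{-1}(A_i)$. The preimage $H^{-1}(A_i)$ is not small a priori, so the paper first invokes uniform continuity of $H^{-1}$ on the compact set $(\supp\phi)_t$, choosing $\delta$ so that ${\rm diam}(H^{-1}(A_i)) \leq \vae$ whenever ${\rm diam}(A_i) \leq \delta$, and then pieces together the diameter bound as $\delta + t + \vae < t + 2\vae$. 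Your double-indexed decomposition $\eta_{i,j} = \phi\chi_{E_i\cap H(E_j)}\mu$, $\nu_{i,j} = c(\phi\circ H)\chi_{H^{-1}(E_i)\cap E_j}\mu$ avoids this entirely: each support is localised directly into a set $\overline{E_i}$ or $\overline{E_j}$ of diameter $\leq\delta$, and the two are bridged through the single witness pair $(y,H(y))$, which costs $t$ exactly once. What you give up is a slightly more intricate combinatorial setup (a $k\times k$ grid of pieces, and the identity $\chi_{E_i\cap H(E_j)}\circ H = \chi_{H^{-1}(E_i)\cap E_j}$ must be checked, using that $H$ is a bijection so that the $H(E_j)$ and $H^{-1}(E_i)$ remain pairwise disjoint); what you gain is that the argument needs only the displacement hypothesis $d_X(x,H^{-1}(x))\leq t$ and the push-forward identity $H_*(c\mu)=\mu$, with no appeal to (uniform) continuity of $H^{-1}$ beyond Borel measurability. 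Your closing remark correctly diagnoses why the naive single-index decomposition without the continuity input only yields $2t$, which is precisely the gap the paper fills with uniform continuity and you fill with the finer partition.
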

\begin{remark}
Obviously (since $H$ is a homeomorphism) the condition $\mu(K)=c\mu(H^{-1}(K))$ is equivalent to $\mu(H(K))=c\mu(K)$ and $d_{X}(x,H^{-1}(x)) \leq t \textrm{ for all } x \in X$ is equivalent to $d_{X}(x,H(x)) \leq t \textrm{ for all } x \in X.$
\end{remark}
\begin{proof}
Let $\vae >0$ and choose $\delta \in (0,\vae)$ such that 
$$d_X(H^{-1}(x),H^{-1}(y)) \leq \vae \textrm{ for all } x,y \in ({\rm supp}(\phi))_t \textrm{ such that } d_X(x,y) \leq \delta.$$

We may write ${\rm supp}(\phi) = A_1 \cup A_2 \cup \cdots \cup A_k$, where the sets $A_i$ are disjoint and measurable with diameter at most $\delta$.
Let 
$$\eta_i=c((\phi\chi_{A_i}) \circ H)\mu \textrm{ and } \nu_i = (\phi\chi_{A_i})\mu.$$
Then 
$$c(\phi \circ H) \mu = \sum_{i=1}^{k} \eta_i \textrm{ and } \phi \mu = \sum_{i=1}^k \nu_i.$$ 
Since it follows from the assumptions on $H$ that 
$$c \int f \circ H \, d\mu = \int f \, d\mu \textrm{ for all } f \in L^1(X),$$
we see that $\|\nu_i\|=\|\eta_i\|$.
Furthermore, since ${\rm supp}(\nu_i) \subset A_i$ and $ {\rm supp}(\eta_i) \subset H^{-1}(A_i)$, we have
$${\rm diam}({\rm supp}(\nu_i) \cup {\rm supp}(\eta_i)) \leq t+\vae +\delta<t+2\vae.$$
Hence 
$$d_{\M}(c(\phi \circ H) \mu,\phi \mu) \leq t+2\vae.$$
Since $\vae >0$ was arbitrary the result follows.
\end{proof}
The next theorem will not really be useful to us since it concerns non-rectifiable curves, but it explains a bit of the nature of the metric space $\M$. (In particular we should note that a very natural type of curve will typically be non-rectifiable with our metric $d_{\M}$.)
\begin{theorem}
$\M$ is path-wise connected. Indeed if $\eta,\nu \in \M$ then $(1-t)\eta + t \nu$, $0\leq t \leq1$, is a (typically non-rectifiable) path connecting $\eta$ to $\nu$. Furthermore $d_{\M}(\eta,(1-t)\eta + t \nu) \leq d_{\M}(\eta,\nu)$.
Hence $\M$ is also path-wise locally connected.
\end{theorem}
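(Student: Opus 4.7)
The proof breaks into four tasks: check that $(1-t)\eta+t\nu$ actually lies in $\M$, establish the distance bound $d_{\M}(\eta,(1-t)\eta+t\nu)\leq d_{\M}(\eta,\nu)$, verify continuity of the map $t\mapsto(1-t)\eta+t\nu$, and then deduce local path-connectedness. The first task is immediate from the remark in Section~\ref{METRIC}: elements of $\M$ correspond to measurable functions $\phi:X\to[0,1]$ of compact support, and this set of densities is convex.

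For the distance bound, I fix any $\vae>d_{\M}(\eta,\nu)$ and pick $(\eta_i,\nu_i)\in\Gamma_{\vae}(\eta,\nu)$. My candidate decomposition of the pair $(\eta,(1-t)\eta+t\nu)$ is the family $(\eta_i,(1-t)\eta_i+t\nu_i)$. The summation identity is clear, and
$\supp((1-t)\eta_i+t\nu_i)\subset\supp(\eta_i)\cup\supp(\nu_i)$,
so the diameter constraint $\leq\vae$ is inherited. The mass defect collapses by a factor of $t$:
\[
\bigl|\|\eta_i\|-\|(1-t)\eta_i+t\nu_i\|\bigr|=t\bigl|\|\eta_i\|-\|\nu_i\|\bigr|,
\]
so the total defect is at most $t\,h(\vae)\leq h(\vae)$. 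This places the pairs in $\Gamma_{\vae}(\eta,(1-t)\eta+t\nu)$ and yields $d_{\M}(\eta,(1-t)\eta+t\nu)\leq\vae$; letting $\vae\downarrow d_{\M}(\eta,\nu)$ completes the bound.

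The decomposition just used does not by itself give continuity, since its diameters are bounded below by roughly $d_{\M}(\eta,\nu)$ and do not shrink with $|s-t|$. To get continuity I will instead, given $\vae>0$, cover the compact set $\supp(\eta)\cup\supp(\nu)$ by finitely many balls of radius $\vae/2$, disjointify them into Borel sets $A_1,\ldots,A_k$, and set $\eta_i=\eta|_{A_i}$ and $\nu_i=\nu|_{A_i}$. The pairs $((1-s)\eta_i+s\nu_i,(1-t)\eta_i+t\nu_i)$ then have supports inside $A_i$, so diameters $\leq\vae$, while the total mass defect is at most $|s-t|(\|\eta\|+\|\nu\|)$. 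Choosing $|s-t|$ small enough that this quantity is $\leq h(\vae)$ (trivially true if $\eta=\nu=0$) shows $d_{\M}((1-s)\eta+s\nu,(1-t)\eta+t\nu)\leq\vae$, and in fact the path is uniformly continuous on $[0,1]$.

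Finally, local path-connectedness follows immediately from the distance bound: any open ball $B_{\M}(\eta,r)$ is star-shaped at $\eta$, because for every $\nu\in B_{\M}(\eta,r)$ the whole linear path from $\eta$ to $\nu$ stays within distance $d_{\M}(\eta,\nu)<r$ of $\eta$. Combined with the continuity established in the previous paragraph, this makes every open ball path-connected. No step is a serious obstacle; the only point worth flagging is that the natural decomposition which gives the sharp distance bound does not simultaneously give continuity as $|s-t|\to0$, so a separate small-diameter cover must be used.
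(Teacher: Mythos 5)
Your proposal is correct and follows essentially the same strategy as the paper: decompose via $\Gamma_{\vae}(\eta,\nu)$, use linearity of total mass to control the defect, and use the fact that supports of linear combinations stay inside the union of supports. The only cosmetic difference in the distance bound is the pairing: you use $(\eta_i,(1-t)\eta_i+t\nu_i)$ directly, while the paper splits into the two families $((1-t)\eta_i,(1-t)\eta_i)$ and $(t\eta_i,t\nu_i)$; the mass-defect computation is the same up to a factor of $t$ either way.

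Your explicit remark that the $\Gamma_{\vae}$ decomposition from the distance bound does \emph{not} give continuity — because the diameter constraint there is on $\supp(\eta_i)\cup\supp(\nu_i)$, which is bounded below by roughly $d_{\M}(\eta,\nu)$ — is a genuinely useful observation that the paper glosses over. The paper's continuity step reuses the same $\eta_i,\nu_i$ notation and only says ``do a similar argument,'' implicitly relying on further refining the pieces; your version instead restricts both $\eta$ and $\nu$ to a fresh partition of $\supp(\eta)\cup\supp(\nu)$ by $\vae/2$-balls, forming the pairs $((1-s)\eta_i+s\nu_i,(1-t)\eta_i+t\nu_i)$, which is cleaner and makes the small-diameter requirement explicit. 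Both yield the bound $|s-t|(\|\eta\|+\|\nu\|)$ on the total mass defect, hence continuity. The concluding argument that balls are star-shaped, giving local path-connectedness, is correct and slightly more detailed than the paper's one-line assertion.
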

\begin{proof}
Let $\vae > d_{\M}(\eta,\nu)$. We may then choose $(\nu_i,\eta_i) \in \Gamma_{\vae}(\nu,\eta)$. 
Now we split 
$$(1-t)\eta + t \nu = \sum_{i=1}^{\infty} (1-t) \eta_i + \sum_{i=1}^{\infty} t\nu_i,$$
and
$$\eta = \sum_{i=1}^{\infty} (1-t)\eta_i + \sum_{i=1}^{\infty}t\eta_i.$$
If we apply the definition of the metric to these decompositions of the measures we see that
indeed the diameters of the unions of the supports are unchanged, and
\begin{align*}
&\sum_{i=1}^{\infty}\bigl|\|(1-t)\eta_i\|-\|(1-t)\eta_i\|\bigr| + \sum_{i=1}^{\infty} \bigl|\|t\eta_i\|-\|t\nu_i\|\bigr| \\
&=t  \sum_{i=1}^{\infty} \bigl|\|\eta_i\|-\|\nu_i\|\bigr| < t h(\vae) <h(\vae).
\end{align*}
Hence we see that $d_{\M}(\nu,(1-t)\nu+t\eta)<\vae.$
Now we may also do a similar argument to $(1-s)\eta+s\nu =\sum_{i=1}^{\infty} (1-s)\eta_i+ \sum_{i=1}^{\infty} s\nu_i$ and $(1-t)\eta+t\nu=\sum_{i=1}^{\infty} (1-t)\eta_i+ \sum_{i=1}^{\infty} t\nu_i$ and note that since 
$$\sum_{i=1}^{\infty} \bigl|\|(1-s)\eta_i\|-\|(1-t)\eta_i\|\bigr| + \sum_{i=1}^{\infty} \bigl|\|s\nu_i\|-\|t\nu_i\|\bigr| = 
|s-t|(\|\eta\|+\|\nu\|),$$
it follows that the curve is continuous as stated.
\end{proof}
\subsection{Rectifiable curves in $\M$}
Rectifiable curves will play a crucial role for us in our construction of Sobolev type spaces. Both of the results in the first theorem are rather direct consequences of our definitions, but they will be important to us later. 
\begin{theorem}\label{rectcont}
If $\eta \in \WR(\M)$ then
\begin{itemize}
\item[(1)] $\|\eta(t)\|$ is constant,
\item[(2)] If $s,t \in [0,b_{\eta}]$ and $|s-t| < \vae$, then ${\rm supp}(\eta(t)) \subset ({\rm supp}(\eta(s)))_{\vae}$. 
\end{itemize}
\end{theorem}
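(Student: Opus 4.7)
Both parts rely on subdividing $[s,t]$ into $n$ equal pieces $t_k = s+k(t-s)/n$ and exploiting the sublinearity $h(\vae)/\vae\to 0$ to beat the factor $n$ in the resulting telescoping sums. For (1), since $\eta$ is $1$-Lipschitz we have $d_{\M}(\eta(t_k),\eta(t_{k+1})) \leq (t-s)/n$, and combining the already-established pointwise estimate $\bigl|\|\nu\|-\|\eta\|\bigr| \leq h(d_{\M}(\nu,\eta))$ with the triangle inequality gives $\bigl|\|\eta(t)\|-\|\eta(s)\|\bigr| \leq n\,h((t-s)/n)$, which tends to zero as $n\to\infty$.

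For (2), I would argue by contradiction. Assume $x\in\supp(\eta(t))$ has ${\rm dist}(x,\supp(\eta(s)))>\vae$, set $\tau := |s-t|<\vae$, and pick $r>0$ with $r+\tau<\vae$. Then $m_r := \eta(t)(B(x,r))>0$ since $x\in\supp(\eta(t))$. The plan is to trace $\eta(t)|_{B(x,r)}$ backwards in time along the chain $\eta(t_n),\eta(t_{n-1}),\ldots,\eta(t_0)$. Fix a large $n$, set $\vae_k := \tau/n+1/n^2$, and choose $(\nu_j',\eta_j')\in\Gamma_{\vae_k}(\eta(t_k),\eta(t_{k+1}))$. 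Starting from $\gamma_1^n := \eta(t)|_{B(x,r)}$ and $\gamma_2^n := \eta(t)-\gamma_1^n$, I iterate Lemma~\ref{split} down to $k=0$ to produce compatible decompositions $\eta(t_k)=\gamma_1^k+\gamma_2^k$ along the chain.

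The step I expect to be the main obstacle is controlling $\supp(\gamma_1^k)$. The construction used in the proof of Lemma~\ref{split} assigns any ``disappearing'' mass (indices $j$ with $\eta_j'=0$ but $\nu_j'\neq 0$) to $\nu_{1,j}$, which would let $\supp(\gamma_1^k)$ absorb arbitrary far-away points of $\supp(\eta(t_k))$ and break the chain. I would instead assign each such $\nu_j'$ entirely to $\nu_{2,j}$; this still gives a valid instance of the conclusion of Lemma~\ref{split} with the same mass-mismatch bound. With this reassignment every nonzero $\nu_{1,j}$ lies in $\supp(\nu_j')$, and nonvanishing of $\eta_{1,j}$ forces $\supp(\eta_j')$ to meet $\supp(\gamma_1^{k+1})$. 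The diameter bound on $\supp(\nu_j')\cup\supp(\eta_j')$ then gives $\supp(\nu_j')\subset (\supp(\gamma_1^{k+1}))_{\vae_k}$, whence $\supp(\gamma_1^k)\subset (\supp(\gamma_1^{k+1}))_{\vae_k}$. Iterating this with $(A_\vae)_\delta\subset A_{\vae+\delta}$ yields $\supp(\gamma_1^0) \subset (\supp(\gamma_1^n))_{\tau+1/n} \subset \overline{B(x,r+\tau+1/n)}$.

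In parallel, the bound $\bigl|\|\gamma_1^{k+1}\|-\|\gamma_1^k\|\bigr|\leq h(\vae_k)$ from Lemma~\ref{split} telescopes to $\|\gamma_1^0\|\geq m_r - n\,h(\tau/n+1/n^2)$, which is positive for all large $n$ by sublinearity of $h$. So $\gamma_1^0$ is a nonzero submeasure of $\eta(s)$ supported in $\overline{B(x,r+\tau+1/n)}$, forcing ${\rm dist}(x,\supp(\eta(s))) \leq r+\tau+1/n$. Letting $n\to\infty$ and then $r\to 0$ gives ${\rm dist}(x,\supp(\eta(s)))\leq\tau<\vae$, the desired contradiction.
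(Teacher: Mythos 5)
Your proof is correct, and part (2) takes a genuinely different route from the paper's. For (1) you and the paper do essentially the same thing: the paper applies Lemma~\ref{fundth} to $\|\eta(\cdot)\|$ with the continuous majorant $0$, while you subdivide directly and telescope; both reduce to $h(\vae)/\vae\to 0$.

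For (2) the paper works \emph{forward} and with measures rather than points: fixing $A=\supp(\eta(0))$ it proves by induction, via Proposition~\ref{splitcor} applied to the two-piece split $\eta(k t/N)=\eta(kt/N)|_{A_{kt/N}}+\eta(kt/N)|_{A^c_{kt/N}}$, that $\eta(kt/N)((A_{kt/N})^c)\leq(2k-1)h(t/N)$, then lets $N\to\infty$. You instead argue by contradiction at a single $x\in\supp(\eta(t))$, trace the mass $\eta(t)|_{B(x,r)}$ \emph{backward} through the chain, and show a positive chunk of it survives to time $s$ and lives near $x$. A few remarks on the differences. (i) You avoid Proposition~\ref{splitcor}, and hence its weak$^*$-compactness argument, at the price of opening up the proof of Lemma~\ref{split} and modifying the assignment of disappearing mass (indices $j$ with $\eta_j'=0$) from $\nu_{1,j}$ to $\nu_{2,j}$. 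I checked this modification: it preserves $(\nu_{i,j},\eta_{i,j})_{i}\in\Gamma_{\vae,\delta_j}(\nu_j',\eta_j')$ (for $j$ with $\eta_j'=0$, putting everything into $\nu_{2,j}$ still yields $\sum_i|\|\nu_{i,j}\|-\|\eta_{i,j}\||=\|\nu_j'\|=\delta_j$ and the diameter bound on $\supp(\nu_j')$ alone), and all the downstream conclusions of Lemma~\ref{split} go through unchanged. With this fix, $\nu_{1,j}\neq 0$ forces $\eta_{1,j}\neq 0$, hence $\supp(\eta_j')$ meets $\supp(\gamma_1^{k+1})$, and the diameter bound gives $\supp(\gamma_1^k)\subset(\supp(\gamma_1^{k+1}))_{\vae_k}$ as you claim; the rest of the telescope and the mass bound $\|\gamma_1^0\|\geq m_r-n\,h(\tau/n+1/n^2)>0$ for large $n$ are correct. (ii) The $+1/n^2$ in $\vae_k$ is exactly what lets you stay in the strict-inequality regime where $\Gamma_{\vae_k}\neq\emptyset$ follows directly from $d_{\M}\leq\tau/n<\vae_k$, sidestepping the non-strict case that is the point of Proposition~\ref{splitcor}. (iii) Both arguments actually prove the sharper inclusion $\supp(\eta(t))\subset(\supp(\eta(s)))_{|s-t|}$. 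The paper's forward/measure version is tidier to write because it never needs to reopen Lemma~\ref{split}; your backward/pointwise version is more intuitive as a mass-transport picture and avoids the compactness lemma entirely. Both are sound.
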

\begin{remark}
Note in particular that part (2) implies, according to Lemma \ref{K_t}, that for a given curve $\eta \in \WR(\M)$ the set $\bigcup_{s \in [0,b_{\eta}]} {\rm supp}(\eta(s))$ is compact.  This is what we meant by that the non-completeness of the space (in case $X$ is not compact) is not an actual problem for rectifiable curves, since we have control of the supports.
\end{remark}
\begin{proof}
\noindent(1): This follows from Lemma \ref{fundth} since
$$\limsup_{s \rightarrow t} \left| \frac{\|\eta(s)\|-\|\eta(t)\|}{s-t}\right| \leq \limsup_{s \rightarrow t} \frac{h(|s-t|)}{|s-t|} =0.$$

\noindent (2): Let $t$ be fixed. It is enough to consider the case $s=0$, $\vae =t$ (by time reversal and/or translation if necessary). To do so let $A={\rm supp}(\eta(0))$ and we will prove that for any  given $N \in \N$ and any $k\in\{1,2,\ldots,N\}$ we have that
\begin{equation}\label{est1}
\eta(kt/N)((A_{kt/N})^c) \leq (2k-1)h(t/N).
\end{equation}
From this the result follows, since then
$$\eta(t)(A_t^c) =\eta(Nt/N)((A_{Nt/N})^c) \leq (2N-1)h(t/N) =\frac{t(2N-1)}{N}\frac{h(t/N)}{t/N},$$
which by the assumption on $h$ goes to zero as $N \rightarrow \infty$.
The case $k=1$ is simply by definition since $d_{\M}(\eta(0),\eta(t/N)) \leq t/N$.
Assume now that formula (\ref{est1}) is true for all $k < k_0+1$. Then we may write
$$\eta(k_0t/N) = \eta_1+\eta_2,$$
where
$$\eta_1=\eta(k_0t/N)|_{A_{k_0t/N}}, \quad \eta_2= \eta(k_0t/N)|_{A_{k_0t/N}^c}.$$
We may then according to Proposition \ref{splitcor} write $\eta((k_0+1)t/N) = \eta_1'+\eta_2'$
where $d_{\M}(\eta_i,\eta_i') \leq t/N$ for $i=1,2$.
In particular $\|\eta_2'\| \leq h(t/N) +\|\eta_2\| \leq 2k_0h(t/N)$. 

Also
$$\eta_1'(({\rm supp}(\eta_1)_{t/N})^c) \leq h(t/N),$$
and since 
$${\rm supp}(\eta_1)_{t/N} \subset (A_{k_0t/N})_{t/N} \subset A_{(k_0+1)t/N}$$ we get that 
$$\eta_1'(A_{(k_0+1)t/N}^c) \leq h(t/N).$$
Summing up we get
$$\eta((k_0+1)t/N)(A_{(k_0+1)t/N}^c) \leq (2k_0 + 1)h(t/N) = (2(k_0+1)-1)h(t/N),$$
and the proof is done.
\end{proof}
The following is a fundamental adaptation of Proposition \ref{splitcor} to rectifiable curves.
\begin{proposition}\label{split2}
Suppose that $\MM$ is at most countable, $\eta \in \WR(\M)$, $t \in [0,b_{\eta}]$ and $\gamma_i \in \M$ for each $i\in \MM$ are such that
$\eta(t)= \sum_{i \in \MM} \gamma_i$. Then there are curves $\eta_i : [0,b_{\eta}]  \rightarrow \M$ in $\WR(\M)$ such that
\begin{itemize}
\item[(1)] $\eta_i(t)=\gamma_i$ for each $i \in \MM$,
\item[(2)] $\eta(s)= \sum_{i \in \MM} \eta_i(s)$ for each $s \in [0,b_{\eta}]$.
\end{itemize}
\end{proposition}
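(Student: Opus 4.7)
The plan is to produce the curves $\eta_i$ as $d_{\M}$-limits of discrete splittings obtained from Proposition \ref{splitcor}, applied along ever finer partitions of $[0, b_{\eta}]$. By Theorem \ref{rectcont}(2) together with Lemma \ref{K_t}, the set $K := \bigcup_{s \in [0, b_{\eta}]} \supp(\eta(s))$ is compact, so every measure appearing in the construction lies in the compact subset $\K = \{\nu \in \M : \supp(\nu) \subset K\}$ of $\M$; this is what makes compactness arguments available.

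Concretely, I would fix nested finite partitions $G_1 \subset G_2 \subset \cdots$ of $[0, b_{\eta}]$ with $t \in G_1$, $\mathrm{mesh}(G_n) \to 0$, and $D := \bigcup_n G_n$ dense in $[0, b_{\eta}]$. Since $\eta \in \WR(\M)$ is $1$-Lipschitz, consecutive points $d < d'$ in $G_n$ satisfy $d_{\M}(\eta(d'), \eta(d)) \leq d' - d$, so Proposition \ref{splitcor} applied at the position of $t$ with the decomposition $\eta(t) = \sum_i \gamma_i$ yields measures $\gamma_i^{d, n} \in \M$ (for $d \in G_n$, $i \in \MM$) with $\gamma_i^{t, n} = \gamma_i$, $\eta(d) = \sum_i \gamma_i^{d, n}$, $d_{\M}(\gamma_i^{d, n}, \gamma_i^{d', n}) \leq |d - d'|$ for all $d, d' \in G_n$ (via the triangle inequality), and $\sum_i \bigl| \|\gamma_i^{d', n}\| - \|\gamma_i^{d, n}\| \bigr| \leq h(|d' - d|)$ for consecutive pairs. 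A diagonal extraction over the countable set $D \times \MM$, using compactness of $\K$, yields a subsequence (still indexed by $n$) for which $\gamma_i^{d, n} \to \eta_i(d)$ in $d_{\M}$ for every $d \in D$ and $i \in \MM$. The $1$-Lipschitz bound passes to the limit, so $\eta_i|_D$ extends uniquely to a $1$-Lipschitz map $\eta_i : [0, b_{\eta}] \to \K$, i.e., an element of $\WR(\M)$.

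The main obstacle is verifying $\eta(s) = \sum_{i \in \MM} \eta_i(s)$ in the countable case, where $d_{\M}$-convergence does not obviously commute with infinite summation. To handle it, I would chain conclusion (3) of Proposition \ref{splitcor} along the grid path from $t$ to a fixed $d \in D$, obtaining
\begin{equation*}
\sum_{i \in \MM} \bigl| \|\gamma_i^{d, n}\| - \|\gamma_i\| \bigr| \;\leq\; \sum_{j} h(d_{j+1} - d_j),
\end{equation*}
where the $d_j$ run through $G_n \cap [\min(t,d), \max(t,d)]$. Since $h(\vae)/\vae \to 0$, for any $\delta > 0$ and $n$ sufficiently large the right side is at most $\delta |d - t| \to 0$, giving $\ell^1$-convergence $\|\gamma_i^{d, n}\| \to \|\gamma_i\|$ in $i$. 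Combined with the termwise convergence $\|\gamma_i^{d, n}\| \to \|\eta_i(d)\|$ (by the $h$-continuity of total mass under $d_{\M}$-convergence), this forces $\|\eta_i(d)\| = \|\gamma_i\|$ for every $i$, so $\sum_i \|\eta_i(d)\| = \|\eta(t)\| = \|\eta(d)\|$. A Fatou argument on densities (each finite subsum is $\leq \eta(d)$ as measures, and weak$^*$-limits of non-negative functions in $L^{\infty}(\mu)$ preserve upper bounds) gives $\sum_i \eta_i(d) \leq \eta(d)$, and equality of total masses upgrades this to $\sum_i \eta_i(d) = \eta(d)$ for all $d \in D$. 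Finally, for general $s \in [0, b_{\eta}]$, Theorem \ref{rectcont}(1) gives the constancy $\|\eta_i(s)\| = \|\gamma_i\|$, so for any $f \in C_c(X)$ the series $\sum_i \int f \, d\eta_i(s)$ admits the summable majorant $\|f\|_{\infty} \|\gamma_i\|$; the Weierstrass M-test then permits termwise passage to the limit as $d \to s$ through $D$ (each term being weak$^*$-continuous in $s$ by the proposition following the metric space theorem), yielding $\sum_i \eta_i(s) = \eta(s)$ for every $s$. Property (1) is immediate since $\gamma_i^{t, n} = \gamma_i$ for all $n$.
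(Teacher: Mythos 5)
Your proposal follows the paper's own argument essentially step for step: both use nested finite partitions of $[0,b_\eta]$ (the paper takes dyadic ones with the reduction $t=0$), apply Proposition~\ref{splitcor} along each grid starting from the given decomposition of $\eta(t)$, invoke compactness of $\K=\{\nu\in\M:\supp(\nu)\subset K\}$ together with a diagonal extraction over the countable set $D\times\MM$ to produce limits at grid points, and then extend by uniqueness of Lipschitz extensions. You are in fact more thorough than the printed proof at the final stage, since you explicitly verify the identity $\eta(s)=\sum_{i\in\MM}\eta_i(s)$ for countable $\MM$ via the chained $h$-estimate, Fatou on densities, and a Weierstrass M-test, whereas the paper simply asserts that the extended Lipschitz maps ``are the required curves''; your verification is correct.
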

\begin{remark}
Note that the curves $\eta_i$ has length at most $b_{\eta}$ but there are certainly situations where, for a particular $i$, the curve may have strictly smaller length even if $\eta \in \RR(\M)$. For example, suppose $\eta_1 \in \RR(\M)$ and $\eta_2 \in \M$ are such that the supports of $\eta_1(s)$ and $\eta_2$ are separated from each other for all $s$, and define $\eta(s)=\eta_1(s)+\eta_2$, then with $\gamma_1=\eta_1(0)$ and $\gamma_2=\eta_2$ it is clear that the construction will simply give us back the maps $\eta_1(s)$ and $\eta_2(s)=\eta_2$.

It is mainly for this reason that we prefer to work with $\WR(\M)$ rather than $\RR(\M)$.
\end{remark}
\begin{proof} First of all we note that according to Theorem \ref{rectcont} the set 
$$K = \bigcup_{s \in [0,b_{\eta}]} {\rm supp}(\eta(s))$$
is compact. So below all measures belongs to 
$$\K =\left\{\eta \in \M : {\rm supp}(\eta) \subset K\right\},$$
which we know is a compact subset of $\M$. Furthermore it is easy to get the general statement from the case $t=0$ and $\MM =\N$, which we assume below.

If $b_{\eta}=0$, then there is nothing to prove, so we therefore now assume that $b_{\eta}>0$. For each $n$ we may divide $[0,b_{\eta}]$ into dyadic pieces
$$0 < \frac{1}{2^n}b_{\eta} < \ldots < \frac{k}{2^n}b_{\eta} < \ldots <\frac{2^n}{2^n}b_{\eta}=b_{\eta}.$$
Let us introduce 
$$D_n= \{s \in [0,b_{\eta}]: \textrm{ there is a number } k \textrm{ such that } s=kb_{\eta}/2^n\},$$
and
$$D = \left\{ s \in [0,b_{\eta}] : \textrm{ there are numbers } k,n \textrm{ such that } s=\frac{k}{2^n} b_{\eta}\right\} = \bigcup_{n=1}^{\infty} D_n.$$
Clearly $D_n$ increases with $n$ and $D$ is countable and dense in $[0,b_{\eta}]$. 

For each $N$ we may now apply Proposition \ref{splitcor} to the measures $\eta(s)$, $s \in D_N$ and our $\gamma_i$ to get decompositions of the form
\begin{itemize}
\item $\gamma_i= \sum_{j=1}^{\infty} \eta_{i,j}^N(0)$,
\item $\eta(s)= \sum_{i,j =1}^{\infty} \eta_{i,j}^N(s)$ for all $s \in D_N$,
\item $\sum_{i,j=1}^{\infty} \left| \|\eta_{i,j}^N((k+1)b_{\eta}/2^N)\| - \|\eta_{i,j}^N(kb_{\eta}/2^N)\|\right| \leq h(b_{\eta}/2^N)$ \newline for all $k=0,1,\ldots,2^N$,
\item ${\rm diam}\left( \supp(\eta_{i,j}^N((k+1)b_{\eta}/2^N)) \cup \supp(\eta_{i,j}^N(kb_{\eta}/2^N))\right) \leq b_{\eta}/2^N.$
\end{itemize}
Now we define 
$$\gamma_i^N(s) = \sum_{j=1}^{\infty} \eta_{i,j}^N(s).$$
Then we have by definition 
$${\rm d}_{\M}(\gamma_i^N((k+1)b_{\eta}/2^N), \gamma_i^N(kb_{\eta}/2^N)) \leq b_{\eta}/2^N.$$
Note that if we iterate this we actually have for every $n \leq N$
$${\rm d}_{\M}(\gamma_i^N((k+1)b_{\eta}/2^n), \gamma_i^N(kb_{\eta}/2^n)) \leq b_{\eta}/2^n.$$
Now let $F=(F_1,F_2) :\N \rightarrow \N \times D$ be a bijection.
Then there is a subsequence $\gamma_{F_1(1)}^{n_1}(F_2(1)), \gamma_{F_1(1)}^{n_2}(F_2(1)),\ldots$ of $\gamma_{F_1(1)}^{n}(F_2(1))$
(defined for all $n$ larger than the smallest $n$ for which $F_2(1)$ belongs to $D_n$) which converges to some $\gamma_{F_1(1)}(F_2(1))$. From the subsequence $\gamma_{F_1(2)}^{n_1}(F_2(2)),\gamma_{F_1(2)}^{n_2}(F_2(2)),\ldots$ we can now pick out a convergent subsequence which converges to some $\gamma_{F_1(2)}(F_2(2))$. If we proceed this way we hence end up with a family of measures $\gamma_i(s)$ for each $i \in \N$ and each $s \in D$.

First of all we note that for every $N \in \N$ we still have
$${\rm d}_{\M}(\gamma_i((k+1)b_{\eta}/2^N), \gamma_i(kb_{\eta}/2^N)) \leq b_{\eta}/2^N.$$
This is so simply because by definition of  $\gamma_i((k+1)b_{\eta}/2^N)$ and $\gamma_i(kb_{\eta}/2^N)$ there will be a subsequence $m_1,m_2,\ldots$of $\N$ such that $\gamma_i^{m_j}(kb_{\eta}/2^N)$ converges to $\gamma_i(kb_{\eta}/2^N)$ and $\gamma_i^{m_j}((k+1)b_{\eta}/2^N)$ converges to $\gamma_i((k+1)b_{\eta}/2^N)$ as $j \rightarrow \infty$. 
Since the corresponding inequality holds for $\gamma_i^{m_j}$ the statement follows.

This however implies that the maps $\gamma_i: D \rightarrow \K$ are $1$-Lipschitz, and hence we may uniquely extend them to such maps defined on $[0,b_{\eta}]$. These are the required curves.

\end{proof}
\begin{corollary}\label{split3}
Suppose $\MM$ is at most countable, $\eta \in \WR(\M)$, $\mu_i \in \M$ for each $i \in \MM$ and that
$$\mu|_{\bigcup_{t \in [0,b_{\eta}]}{\rm supp} (\eta(t))} \leq \sum_{i \in \MM} \mu_i \leq \mu.$$
Assume furthermore that $0 \leq t_0 <t_1<\ldots <t_m \leq b_{\eta}$.
Then there are curves $\nu_j \in \WR(\M)$, $j \in \N$ such that
\begin{itemize}
\item for each $j \in \N$ and $k \in \{0,1,\ldots,m\}$ there is $i_k \in \MM$ such that
$$\nu_j(t_k) \leq \mu_{i_k},$$
\item $\eta(s)= \sum_{j=1}^{\infty} \nu_j(s) \textrm{ for every } s \in [0,b_{\eta}].$
\end{itemize}
\end{corollary}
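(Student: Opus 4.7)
The plan is to prove the statement by induction on $m$, iterating Proposition \ref{split2} once at each of the times $t_0, t_1, \ldots, t_m$ so that at each step the value of every curve in our current family at the current sample time is dominated by a single $\mu_i$.

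First I would record an auxiliary measure-splitting lemma: if $\nu \in \M$ satisfies $\nu \leq \sum_{i \in \MM} \mu_i$, then $\nu$ admits a decomposition $\nu = \sum_{i \in \MM} \nu_i$ with $\nu_i \leq \mu_i$ for each $i$. Writing $\phi, \phi_i$ for the densities of $\nu, \mu_i$ with respect to $\mu$, the explicit choice
$$\nu_i = \psi_i \mu, \qquad \psi_i(x) = \begin{cases} \phi(x)\phi_i(x)/\sum_{j \in \MM} \phi_j(x), & \sum_j \phi_j(x) > 0,\\ 0, & \text{otherwise,} \end{cases}$$
works: one has $\psi_i \leq \phi_i$ since $\phi \leq \sum_j \phi_j$, while $\sum_i \psi_i = \phi$ on $\{\sum_j \phi_j > 0\}$ and both sides vanish off that set.

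Next I apply this to $\eta$ at time $t_0$: by hypothesis $\eta(t_0) \leq \mu|_{\supp(\eta(t_0))} \leq \sum_i \mu_i$, so the lemma provides $\eta(t_0) = \sum_i \gamma_i^{(0)}$ with $\gamma_i^{(0)} \leq \mu_i$, and Proposition \ref{split2} then yields curves $\nu_i^{(0)} \in \WR(\M)$ with $\nu_i^{(0)}(t_0) = \gamma_i^{(0)}$ and $\sum_i \nu_i^{(0)}(s) = \eta(s)$ for every $s$. This handles $m=0$. For the inductive step, suppose we already have a countable family $\{\nu_\alpha\}$ in $\WR(\M)$ with $\sum_\alpha \nu_\alpha(s) = \eta(s)$ and, for each $\alpha$ and each $k < m$, an index $i_k^{(\alpha)} \in \MM$ with $\nu_\alpha(t_k) \leq \mu_{i_k^{(\alpha)}}$. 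Since $\nu_\alpha(t_m) \leq \eta(t_m) \leq \sum_j \mu_j$ (the first inequality by pointwise non-negativity of the summands), the splitting lemma gives $\nu_\alpha(t_m) = \sum_j \gamma_{\alpha,j}$ with $\gamma_{\alpha,j} \leq \mu_j$. Applying Proposition \ref{split2} to $\nu_\alpha$ at time $t_m$ produces $\{\nu_{\alpha,j}\}_j \subset \WR(\M)$ with $\nu_{\alpha,j}(t_m) = \gamma_{\alpha,j}$ and $\sum_j \nu_{\alpha,j}(s) = \nu_\alpha(s)$, from which $\nu_{\alpha,j}(t_k) \leq \nu_\alpha(t_k) \leq \mu_{i_k^{(\alpha)}}$ for all $k < m$, so the enlarged family satisfies the inductive hypothesis at level $m$.

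After $m+1$ iterations the resulting family is indexed by $\MM^{m+1}$, a countable set, and re-enumerating (discarding identically-zero curves if one prefers) as $\{\nu_j\}_{j \in \N}$ gives the required sequence. The main obstacle is simply verifying that the hypothesis $\nu_\alpha(t_m) \leq \sum_i \mu_i$ of the splitting lemma persists along the induction, but this reduces at once to $\nu_\alpha(s) \leq \eta(s)$ pointwise together with the standing assumption $\mu|_{\bigcup_s \supp(\eta(s))} \leq \sum_i \mu_i$; beyond that the argument is bookkeeping on top of the two tools (proportional splitting and Proposition \ref{split2}).
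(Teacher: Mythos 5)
Your argument is correct and is essentially the paper's own: the paper also proceeds by induction on $m$, uses the proportional splitting $\gamma_i = \phi_i\,\eta(t_0)$ (where $\mu_i = \phi_i\mu$) to decompose the measure at the current sample time, and then invokes Proposition \ref{split2} to propagate that decomposition along the whole curve. Your auxiliary lemma with the normalization by $\sum_j \phi_j$ reduces to the paper's unnormalized choice here, since the standing hypothesis $\mu|_{\bigcup_t \supp(\eta(t))} \leq \sum_i \mu_i \leq \mu$ forces $\sum_i\phi_i = 1$ $\mu$-a.e.\ on the relevant supports; the indexing/re-enumeration bookkeeping you describe matches the paper's use of a bijection $\N \to \MM\times\N$.
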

\begin{proof} Let $\mu_i = \phi_i\mu$.
If $m=0$, then we may apply Proposition \ref{split2} to the measures $\gamma_i = \phi_i \eta(t_0),$  and the statement follows. Now we proceed by induction. Suppose the statement holds up to $m-1$. Then there are curves $\nu_j'  \in \WR(\M)$ such that 
\begin{itemize}
\item for each $j \in \N$ and $k \in \{0,1,\ldots,m-1\}$ there is $i_k \in \MM$ such that
$$\nu_j'(t_k) \leq \mu_{i_k},$$
\item $\eta(s)= \sum_{j=1}^{\infty} \nu_j'(s) \textrm{ for every } s \in [0,b_{\eta}].$
\end{itemize}

Now we apply Proposition \ref{split2} again but to each of the curves $\nu_j'$, $t=t_m$ and  $\gamma_{(i,j)} = \phi_i \nu_j'(t_m)$.
This gives us curves $\eta_{(i,j)} \in \WR(\M)$ such that
\begin{itemize}
\item for each $j \in \N$ 
$$\eta_{(i,j)}(t_m) = \gamma_{(i,j)} \leq \mu_i,$$
\item $\nu_j'(s)= \sum_{i \in \MM} \eta_{(i,j)}(s) \textrm{ for every } s \in [0,b_{\eta}].$
\end{itemize}
Hence 
$$\eta(s) = \sum_{j \in \N} \nu_j'(s) = \sum_{(i,j) \in \MM \times \N} \eta_{(i,j)}(s).$$
So if $F: \N \rightarrow \MM \times \N$ is a bijection, then with $\nu_j = \eta_{F(j)}$ we get that
$$\eta(s) = \sum_{j=1}^{\infty} \nu_j(s) \textrm{ for all } s \in [0,b_{\eta}],$$
and also for every $j \in \N$ there is $(i,k)$ such that $\nu_j(s) = \eta_{(i,k)} \leq \nu'_k(s)$ holds for all $s$, and hence for every $k \in \{0,1,\ldots,m-1\}$ we have that there is, by the assumptions on $\nu_j'$, $i_k \in \MM$ such that $\nu_j(t_k) \leq \nu_j'(t_k) \leq \mu_{i_k}$. This finishes the proof.  
\end{proof}
Generating curves through measure preserving families of maps is crucial for our applications later. 
\begin{theorem}\label{Hmap2}
Suppose $c: [0,T] \rightarrow (0,1]$, for every $t \in [0,T]$ the map $H_t: X \rightarrow X$ is a homeomorphism and that the map $(t,x) \mapsto H_t^{-1}(x)$ is jointly continuous in $t$ and $x$. Suppose also that for every $t \in [0,T]$ 
$$c(t)\mu(H_t^{-1}(K)) =\mu(K) \textrm{ for all compact subsets } K \textrm{ of } X,$$ 
and that for all $s,t \in [0,T]$ we have that 
$$d_X(H_s^{-1}(x),H_t^{-1}(x)) \leq |s-t| \textrm{ for all } x \in X.$$
If $\phi: X \rightarrow [0,1]$ is measurable and such that $({\rm supp}(\phi))_T$ is a compact subset of $X$, then 
$$\eta(t) = c(t)(\phi \circ H_t)\mu \in \WR(\M).$$ 
\end{theorem}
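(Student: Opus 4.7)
The plan is to verify separately that $\eta(t) \in \M$ for every $t \in [0,T]$, and that $t \mapsto \eta(t)$ is $1$-Lipschitz from $[0,T]$ into $(\M, d_{\M})$; together these give $\eta \in \WR(\M)$. For membership in $\M$, since $H_t$ is a homeomorphism, $\phi \circ H_t$ is Borel measurable with values in $[0,1]$, and then $c(t) \leq 1$ gives $c(t)(\phi \circ H_t)\mu \leq \mu$. Compactness of the support follows from the observation that $\supp(\phi)$ is a closed subset of the compact set $(\supp(\phi))_T$, hence itself compact, so that $\overline{H_t^{-1}(\supp(\phi))}$ is a compact superset of $\supp(\eta(t))$.

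For the Lipschitz estimate I would first extract a change-of-variables formula from the measure-preservation hypothesis: the condition $c(t)\mu(H_t^{-1}(K)) = \mu(K)$ on compact $K$ extends by Borel regularity of $\mu$ and of $(H_t)_*\mu$ to all Borel sets, giving $\int c(t)(f \circ H_t)\, d\mu = \int f\, d\mu$ for every nonnegative Borel $f$ with compact support. Applied to $f = \phi \chi_A$, this shows that the total mass of $c(t)(\phi\chi_A\circ H_t)\mu$ is independent of $t$, which will yield an equimass simultaneous decomposition of $\eta(s)$ and $\eta(t)$ along any measurable partition of $\supp(\phi)$.

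Now fix $s,t \in [0,T]$ and $\vae' > 0$. By joint continuity of $(u,x) \mapsto H_u^{-1}(x)$ on the compact product $[0,T] \times \supp(\phi)$ and the Heine-Cantor theorem, I would choose $\delta > 0$ so that $d_X(H_u^{-1}(z), H_u^{-1}(w)) \leq \vae'$ for every $u \in [0,T]$ and every $z, w \in \supp(\phi)$ with $d_X(z,w) \leq \delta$. Partition $\supp(\phi)$ into finitely many disjoint Borel sets $A_1, \ldots, A_k$ of diameter at most $\delta$, and set $\eta_i^u = c(u)(\phi\chi_{A_i}\circ H_u)\mu$ for $u \in \{s,t\}$. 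Then $\eta(u) = \sum_{i=1}^k \eta_i^u$ and the previous step gives $\|\eta_i^s\| = \|\eta_i^t\|$. For $x \in \supp(\eta_i^s)$ and $y \in \supp(\eta_i^t)$, writing $z = H_s(x)$ and $w = H_t(y)$, both lie in $\overline{A_i} \subset \supp(\phi)$, so the hypothesis gives $d_X(x, H_t^{-1}(z)) \leq |s-t|$ and uniform continuity gives $d_X(H_t^{-1}(z), H_t^{-1}(w)) \leq \vae'$, yielding $d_X(x,y) \leq |s-t| + \vae'$. Therefore $(\eta_i^s, \eta_i^t)_{i=1}^k \in \Gamma_{|s-t|+\vae',0}(\eta(s),\eta(t)) \subseteq \Gamma_{|s-t|+\vae'}(\eta(s),\eta(t))$, so $d_{\M}(\eta(s), \eta(t)) \leq |s-t| + \vae'$, and letting $\vae' \to 0$ finishes the Lipschitz bound.

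The main obstacle I expect is the diameter estimate: promoting the pointwise bound $d_X(H_s^{-1}(x), H_t^{-1}(x)) \leq |s-t|$ to a bound on $\text{diam}(\supp(\eta_i^s) \cup \supp(\eta_i^t))$ requires an auxiliary uniform-continuity argument for $H_t^{-1}$ in its spatial variable, and this is exactly where the joint continuity hypothesis and the compactness of $\supp(\phi)$ come together. The remaining ingredients (measurability, compact support, equimass decomposition) are routine consequences of the hypotheses.
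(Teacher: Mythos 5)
Your proof is correct and follows essentially the same route as the paper's: both partition $\supp(\phi)$ into small-diameter Borel pieces, obtain an equimass decomposition from the measure-preservation hypothesis, and use uniform continuity of $(u,x)\mapsto H_u^{-1}(x)$ on a compact set to control the diameter of $\supp(\eta_i(s))\cup\supp(\eta_i(t))$. The only difference is stylistic — the paper compresses the argument by referring back to Theorem~\ref{Hmap1} and works with the modulus of continuity over $(\supp(\phi))_T$ rather than $\supp(\phi)$, while you spell out the change-of-variables extension and the Heine--Cantor step explicitly.
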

\begin{proof}  Let $\vae >0$. We mimic the construction from the proof of Theorem \ref{Hmap1}, and define the measures 
$$\eta_i(s) =c(s)((\phi \chi_{A_i}) \circ H_s)\mu,$$
where ${\rm supp}(\phi)$ is a disjoint union of the sets $A_1,A_2,\dots,A_k$ which are measurable with diameter at most $\delta \in (0,\vae)$ such that 
\begin{align*}
& d_{X}(H_s^{-1}(x),H_s^{-1}(y)) \leq \vae\\
&\textrm{for all } s \in [0,T] \textrm{ and } x,y \in ({\rm supp}(\phi))_T \textrm{ such that } d_X(x,y) \leq \delta.
\end{align*}
Then, as in the proof of Theorem \ref{Hmap1}, we see that $\|\eta_i(s)\|$ is constant, and that
$${\rm diam}({\rm supp}(\eta_i(s)) \cup {\rm supp}(\eta_i(t))) \leq |s-t|+2\vae.$$
Hence
$$d_{\M}(\eta(t),\eta(s)) \leq |s-t| + 2\vae.$$
Since $\vae >0$ is arbitrary we see that $\eta:[0,T] \rightarrow \M$ is $1$-Lipschitz, which proves the statement. 
\end{proof}
\begin{example} Our most important example will be when $X$ is an open subset of $\R^n$, $d_X$ is the usual Euclidean norm and $\mu$ denotes the Lebesgue measure. 

The most important type of curve for us will be given by translation. Suppose $\overline{e}$ is a unit vector in $\R^n$ and $0 \leq \phi \leq 1$ where $\phi$ is a Borel measurable function with compact support in $X$. If we put
$$\eta(t) = \phi(\cdot + t\overline{e})\mu,$$
Then it follows from Theorem \ref{Hmap2} above that $\eta(t) \in \WR(\M)$ defined for $t \in [0,b_{\eta}]$ such that $\bigcup_{t \in [0,b_{\eta}]} {\rm supp}(\phi(\cdot+t\overline{e}))$ is a compact subset of  $X$, and in particular $d_{\M}(\eta(s),\eta(t))=|s-t|$.

It is also worthwhile to consider for a fixed $r>0$
$$\eta(t)=\left(\frac{r}{r+t}\right)^n \mu|_{B(x,r+t)}$$
(note that $\|\eta(t)\|$ is constant). Then it is again easy to see, using Theorem \ref{Hmap2}, that $d_{\M}(\eta(s),\eta(t))=
|s-t|$, and hence it forms a curve in $\WR(\M)$.

It should also be remarked that if we replace the Lebesgue measure by some other measure $\mu' =\psi \mu$ (where $\mu$ still denotes Lebesgue measure), in case there is a constant $c>0$ such that $\psi \geq c$, then $c\eta(t),$ where $\eta(t)$ is as above, belongs to $\M'$ where
$$\M'=\{\nu: 0 \leq \nu \leq \mu', \quad {\rm supp}(\nu) \textrm{ is compact.}\},$$
and it is easily seen to be a rectifiable curve in $\X'$, with the metric $d_{\M'}$. 
\end{example}

\section{$\L^p$-spaces on $\X$} \label{LP}
Let 
$$\X = \M \setminus \{0\},$$
and we give this set the induced metric $d_{\M}$.

For any function $F : \X  \rightarrow \overline{\R}$ we define the $\L^p(\X)$-norm:
\begin{equation}\label{Lpnorm}
\|F\|_{\L^p(\X)} = \sup\left\{\left(\sum_{i=1}^k |F(\eta_i)|^p \|\eta_i\|\right)^{1/p}:  \eta_i \in \X, \,  \supp(\eta_i) \cap \supp(\eta_j) = \emptyset \textrm{ if } i \ne j\right\}.
\end{equation}
We also introduce the space $\L^p(\X)$ to consist of all $F: \X  \rightarrow \R$ such that $\|F\|_{\L^p(\X)} < \infty$. Note that in case $|F(\eta)|=\infty$ for some $\eta$, then $||F||_{\L^p(\X)} = \infty$ trivially, so every function $F \in \L^p(\X)$ maps $\X$ into $\R$.
\begin{remark}
Although obvious it is worthwhile to note that there are no measureability assumptions on the functions $F$. Any function defined for all elements in $\X$ would do. In particular we do not need to worry about such issues when we do constructions like the upper semicontinuous regularization along curves $\breve{F}$ for instance. 
\end{remark}
\begin{lemma} $\L^p(\X)$ is a vector space, and $\|\cdot\|_{\L^p(\X)}$ is a norm on this space. \end{lemma}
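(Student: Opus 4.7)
The plan is to verify the three norm axioms, from which closure of $\L^p(\X)$ under addition and scalar multiplication follows immediately, so that it is a vector space.

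First I would handle positive definiteness. Non-negativity is built into the definition as a supremum of non-negative quantities. Conversely, if $\|F\|_{\L^p(\X)} = 0$, then for every single $\eta \in \X$ the one-term sum (taking $k=1$) gives $|F(\eta)|^p \|\eta\| \leq 0$; since $\|\eta\| > 0$ by the definition of $\X$, we conclude $F(\eta)=0$. Absolute homogeneity $\|\lambda F\|_{\L^p(\X)} = |\lambda| \|F\|_{\L^p(\X)}$ is clear by factoring $|\lambda|^p$ out of each sum in the defining supremum.

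The only step that requires real work is the triangle inequality, and it reduces to Minkowski's inequality for finite sequences. Fix disjoint-support $\eta_1,\ldots,\eta_k \in \X$, and write $a_i = F(\eta_i)\|\eta_i\|^{1/p}$ and $b_i = G(\eta_i)\|\eta_i\|^{1/p}$. Then
\begin{equation*}
\left(\sum_{i=1}^k |F(\eta_i)+G(\eta_i)|^p \|\eta_i\|\right)^{1/p} = \left(\sum_{i=1}^k |a_i + b_i|^p\right)^{1/p} \leq \left(\sum_{i=1}^k |a_i|^p\right)^{1/p} + \left(\sum_{i=1}^k |b_i|^p\right)^{1/p},
\end{equation*}
and each term on the right is bounded by $\|F\|_{\L^p(\X)}$ and $\|G\|_{\L^p(\X)}$ respectively, since $(\eta_i)_{i=1}^k$ is an admissible disjoint-support family in the definition of both. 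Taking the supremum over admissible families on the left yields $\|F+G\|_{\L^p(\X)} \leq \|F\|_{\L^p(\X)} + \|G\|_{\L^p(\X)}$. A subtle point worth mentioning is that $F+G$ is a priori an extended real-valued function: but since $F,G \in \L^p(\X)$ forces $|F(\eta)|, |G(\eta)| < \infty$ for every $\eta \in \X$ (as noted in the paragraph preceding the lemma), $F+G$ is genuinely real-valued and the pointwise evaluations above make sense.

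The main (mild) obstacle is this bookkeeping about extended real values and about the fact that the admissible families in the supremum are finite collections with pairwise disjoint supports, as opposed to countable ones — but since any finite subcollection of an admissible family for $F+G$ is simultaneously admissible for $F$ and for $G$, the argument above goes through unchanged. Once the norm properties are established, closure of $\L^p(\X)$ under addition is immediate from the triangle inequality and closure under scalar multiplication is immediate from homogeneity, so $\L^p(\X)$ is a vector space.
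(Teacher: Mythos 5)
Your proof is correct and follows essentially the same route as the paper: positive definiteness via the one-term sum, homogeneity by factoring out $|\lambda|^p$, and the triangle inequality by applying Minkowski's inequality for the counting measure to the weighted sequences $F(\eta_i)\|\eta_i\|^{1/p}$. The extra remark about $F+G$ being genuinely real-valued is a welcome small addition but the argument is otherwise the same.
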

\begin{proof}
Suppose $F,G \in \L^p(\X)$ and $a \in \R$.
If $\|F\|_{\L^p(\X)} =0$ then $|F(\eta)|^p\|\eta\|=0$ for every $\eta \in \X$. Hence $F(\eta)=0.$ 
It is also immediate by construction that $\|aF\|_{\L^p(\X)}=|a| \cdot\|F\|_{\L^p(\X)}$. Finally to prove the triangle inequality we have 
\begin{align*}
&\left(\sum_{i=1}^k |F(\eta_i)+G(\eta_i)|^p\|\eta_i\|\right)^{1/p} = \left(\sum_{i=1}^k |F(\eta_i)\|\eta_i\|^{1/p}+G(\eta_i)\|\eta_i\|^{1/p}|^p\right)^{1/p} \\
&\leq\left(\sum_{i=1}^k |F(\eta_i)\|\eta_i\|^{1/p}|^p\right)^{1/p} + \left(\sum_{i=1}^k |G(\eta_i)\|\eta_i\|^{1/p}|^p\right)^{1/p},
\end{align*}
where we in the last step simply applied Minkowski's inequality for the counting measure.

Hence we see that $F+G$ and $aF$ also belongs to $\L^p(\X)$, and the proof is done.
\end{proof}
\begin{lemma}
Suppose $F_j : \X \rightarrow \R$, $1 \leq j < \infty$ and $F,G : \X \rightarrow \R$. Then the following holds
\begin{itemize}
\item[(1)]  If $0\leq F_1 \leq F_2$ then $\|F_1\|_{\L^p(\X)} \leq \|F_2\|_{\L^p(\X)},$
\item[(2)] If  $0 \leq F_j \nearrow F \textrm{ then } \|F_j\|_{\L^p(\X)} \nearrow \|F\|_{\L^p(\X)},$
\item[(3)] $\|\sum_{j=1}^{\infty} F_j\|_{\L^p(\X)} \leq \sum_{j=1}^{\infty}\|F_j\|_{\L^p(\X)},$
\item[(4)] $\bigl\| |F|-|G| \bigr\|_{\L^p(\X)} \leq \|F-G\|_{\L^p(\X)},$
\item[(5)] If $\|F-F_j\|_{\L^p(\X)} \rightarrow 0$ as $j \rightarrow \infty$ then $\bigl\| |F|-|F_j| \bigr\|_{\L^p(\X)} \rightarrow 0$ as $j \rightarrow \infty$.
\end{itemize}
\end{lemma}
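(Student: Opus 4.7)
The plan is to handle the five statements in the natural order, since (2) contains the only nontrivial argument and the others are essentially bookkeeping on top of the definition of $\|\cdot\|_{\L^p(\X)}$.

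Statement (1) is immediate from the definition. For any admissible disjoint-support collection $\eta_1,\ldots,\eta_k$ we have $|F_1(\eta_i)|^p\|\eta_i\| = F_1(\eta_i)^p\|\eta_i\| \leq F_2(\eta_i)^p\|\eta_i\|$, so every sum appearing in the supremum for $\|F_1\|_{\L^p(\X)}$ is dominated by the corresponding sum for $\|F_2\|_{\L^p(\X)}$. Taking suprema yields the claim. In particular $\|F\|_{\L^p(\X)}=\||F|\|_{\L^p(\X)}$ for any $F$, a remark that will be used below.

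For (2), the inequality $\|F_j\|_{\L^p(\X)} \leq \|F\|_{\L^p(\X)}$ is (1), so the sequence $\|F_j\|_{\L^p(\X)}$ is increasing and bounded above by $\|F\|_{\L^p(\X)}$. I would split into two cases according to whether $\|F\|_{\L^p(\X)}$ is finite or infinite. If $\|F\|_{\L^p(\X)}<\infty$, fix $\vae>0$ and choose an admissible disjoint-support family $\eta_1,\ldots,\eta_k \in \X$ with
\begin{equation*}
\left(\sum_{i=1}^k |F(\eta_i)|^p\|\eta_i\|\right)^{1/p} > \|F\|_{\L^p(\X)}-\vae.
\end{equation*}
Since $F_j(\eta_i) \nearrow F(\eta_i)$ for each of the \emph{finitely many} indices $i$, monotone convergence of a finite sum gives $\sum_{i=1}^k |F_j(\eta_i)|^p\|\eta_i\| \to \sum_{i=1}^k |F(\eta_i)|^p\|\eta_i\|$, so for $j$ large enough the $j$-th sum exceeds $(\|F\|_{\L^p(\X)}-2\vae)^p$, whence $\|F_j\|_{\L^p(\X)} \geq \|F\|_{\L^p(\X)}-2\vae$. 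If $\|F\|_{\L^p(\X)}=\infty$ the same argument applies with an arbitrary $M>0$ in place of $\|F\|_{\L^p(\X)}-\vae$, giving $\|F_j\|_{\L^p(\X)} \to \infty$. The only subtle point, which is the main thing to verify, is the passage from pointwise monotone convergence to convergence of the finite sum; this is where working with only finitely many disjoint $\eta_i$ at once is crucial.

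For (3), fix any admissible disjoint-support $\eta_1,\ldots,\eta_k$ and apply Minkowski's inequality for the counting measure on $\{1,\ldots,k\}$ together with weights $\|\eta_i\|$ absorbed into the entries: writing $a_{i,j}=|F_j(\eta_i)|\|\eta_i\|^{1/p}$, we obtain
\begin{equation*}
\left(\sum_{i=1}^k \Bigl|\sum_{j=1}^\infty F_j(\eta_i)\Bigr|^p \|\eta_i\|\right)^{1/p} \leq \sum_{j=1}^\infty \left(\sum_{i=1}^k a_{i,j}^p\right)^{1/p} \leq \sum_{j=1}^\infty \|F_j\|_{\L^p(\X)},
\end{equation*}
and taking the supremum over admissible families proves (3). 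Statement (4) follows by combining (1) with the elementary pointwise inequality $\bigl||F(\eta)|-|G(\eta)|\bigr| \leq |F(\eta)-G(\eta)|$ and the identity $\|H\|_{\L^p(\X)}=\||H|\|_{\L^p(\X)}$ noted above. Finally, (5) is an immediate consequence of (4), by setting $G=F_j$ and letting $j \to \infty$. No additional obstacle is expected beyond the finite-versus-infinite case distinction in (2).
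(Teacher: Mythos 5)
Your proof is correct and follows essentially the same route as the paper: (1) is immediate from monotonicity, (2) is proved by fixing a finite near-optimal disjoint family and using pointwise monotone convergence on finitely many measures, (4) is the pointwise triangle inequality combined with $\|H\|_{\L^p(\X)}=\||H|\|_{\L^p(\X)}$, and (5) is an application of (4). The only small variation is in (3): you invoke Minkowski's inequality for the counting measure directly on the infinite sum, whereas the paper first replaces $F_j$ by $|F_j|$, then uses (2) to reduce to finitely many terms and applies the already-proved finite triangle inequality for $\|\cdot\|_{\L^p(\X)}$; both are valid and are really the same Minkowski-type argument.
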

\begin{proof}
Statement $(1)$ is obvious. To prove $(2)$ we need to show that for any $\eta_i \in \X$, $1 \leq i \leq k$ such that $\supp(\eta_i) \cap \supp(\eta_j) = \emptyset \textrm{ if } i \ne j$ we have
$$\left(\sum_{i=1}^k |F(\eta_i)|^p\|\eta_i\| \right)^{1/p} \leq \lim_{j \rightarrow \infty} \|F_j\|_{\L^p(\X)}.$$
Given $\vae >0$ we may choose $J$ such that for all $j \geq J$ and each $i \in \{1,2,\ldots,k\}$ we have 
$F_j(\eta_i) \geq F(\eta_i) - \vae/(k\|\eta_i\|)^{1/p}.$
Hence we get
$$\left(\sum_{i=1}^k |F(\eta_i)|^p\|\eta_i\| \right)^{1/p} \leq \left(\sum_{i=1}^k \left|F_j(\eta_i)+\frac{\vae}{(k\|\eta_i\|)^{1/p}}\right|^p\|\eta_i\| \right)^{1/p} \leq \|F_j\|_{\L^p(\X)}+\vae,$$
and from this it is easy to see that the statement follows.

$(3)$ follows from $(1)-(2)$ together with the (finite) triangle inequality since  
\begin{align*}
&\|\sum_{j=1}^{\infty} F_j\|_{\L^p(\X)} 
\leq  \left\|\sum_{j=1}^{\infty} |F_j|\right\|_{\L^p(\X)} = \lim_{J \rightarrow \infty} \left\|\sum_{j=1}^{J} |F_j|\right\|_{\L^p(\X)} \\
&\leq \lim_{J \rightarrow \infty} \sum_{j=1}^{J}\|F_j\|_{\L^p(\X)} = \sum_{j=1}^{\infty}\|F_j\|_{\L^p(\X)}.
\end{align*}
$(4)$ follows from the fact that $\left||F(\eta)|-|G(\eta)|\right| \leq \left|F(\eta)-G(\eta)\right|$, because we then get
$$\sum_{i=1}^k\left||F(\eta_i)|-|G(\eta_i)|\right|^p\|\eta_i\| \leq \sum_{i=1}^k|F(\eta_i)-G(\eta_i)|^p\|\eta_i\|
\leq \|F-G\|^p_{\L^p(\X)}.$$
$(5)$ is an immediate consequence of $(4)$.
\end{proof}
\begin{theorem}[H\"o{}lder's inequality]
Suppose $1 \leq p,q \leq \infty$ and $\frac{1}{p}+\frac{1}{q} =1$.
Given $F,G : \X \rightarrow \R$, then $\|FG\|_{\L^1(\X)} \leq \|F\|_{\L^p(\X)}\|G\|_{\L^q(\X)}.$ 
\end{theorem}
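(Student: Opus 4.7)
The plan is to reduce the claim directly to the classical discrete H\"older inequality, using the key observation that both norms $\|\cdot\|_{\L^p(\X)}$ and $\|\cdot\|_{\L^q(\X)}$ are suprema over the same family of admissible finite collections of disjointly supported measures in $\X$.

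First, I would fix an arbitrary admissible finite collection $\eta_1,\eta_2,\ldots,\eta_k \in \X$ with pairwise disjoint supports (which is exactly the family of collections over which both $\L^p$ and $\L^q$ norms are defined). For this fixed collection I would factor the ``$\|\eta_i\|$'' weight by writing
\begin{equation*}
|F(\eta_i)G(\eta_i)|\|\eta_i\| = \left(|F(\eta_i)|\|\eta_i\|^{1/p}\right)\left(|G(\eta_i)|\|\eta_i\|^{1/q}\right),
\end{equation*}
using $\tfrac{1}{p}+\tfrac{1}{q}=1$. Then I would apply the standard H\"older inequality for sums (equivalently, H\"older for the counting measure on $\{1,\ldots,k\}$) to obtain
\begin{equation*}
\sum_{i=1}^k |F(\eta_i)G(\eta_i)|\|\eta_i\| \leq \left(\sum_{i=1}^k |F(\eta_i)|^p\|\eta_i\|\right)^{1/p}\left(\sum_{i=1}^k |G(\eta_i)|^q\|\eta_i\|\right)^{1/q}.
\end{equation*}

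Since the same collection $\{\eta_i\}$ is admissible in both definitions of the $\L^p$ and $\L^q$ norms, each factor on the right is bounded by the corresponding norm, giving
\begin{equation*}
\sum_{i=1}^k |F(\eta_i)G(\eta_i)|\|\eta_i\| \leq \|F\|_{\L^p(\X)}\|G\|_{\L^q(\X)}.
\end{equation*}
Taking the supremum of the left-hand side over all finite disjointly supported collections then yields exactly $\|FG\|_{\L^1(\X)} \leq \|F\|_{\L^p(\X)}\|G\|_{\L^q(\X)}$.

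The only minor wrinkle is the endpoint cases. For $1<p,q<\infty$ the argument above is immediate. If one endpoint is involved (say $p=1$, $q=\infty$), the inequality $|F(\eta_i)G(\eta_i)| \leq |F(\eta_i)|\cdot\sup_{\eta \in \X}|G(\eta)|$ is applied pointwise in $i$ before summing, and the appropriate $\L^\infty$-interpretation of the norm is used. There is no real obstacle here; the main point of the proof is simply the observation that the common structural feature of the two norms (sup over the same class of disjoint collections, with weights $\|\eta_i\|^{1/p}$ absorbed into the summand) makes the classical H\"older inequality transfer without modification.
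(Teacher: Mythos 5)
Your proof is correct and is essentially identical to the paper's: both fix an admissible disjointly-supported collection, split $\|\eta_i\| = \|\eta_i\|^{1/p}\|\eta_i\|^{1/q}$, apply the discrete H\"older inequality for the counting measure, and take suprema. Your extra remark about the endpoint cases $p=1$, $q=\infty$ is a sensible addition that the paper leaves implicit.
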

\begin{proof}
Suppose $\supp(\eta_i) \cap \supp(\eta_j) = \emptyset \textrm{ if } i \ne j$, where each $\eta_i \in \X$. Then 
\begin{align*}
\sum_{i=1}^{k} |F(\eta_i)G(\eta_i)|\cdot\|\eta_i\| &= \sum_{i=1}^k |F(\eta_i)\|\eta_i\|^{1/p} |G(\eta_i)| \cdot\|\eta_i\|^{1/q} \\
&\leq \left(\sum_{i=1}^k |F(\eta_i)|^p\|\eta_i\|\right)^{1/p} \left(\sum_{i=1}^k |G(\eta_i)|^q \|\eta_i\|\right)^{1/q} \\
&\leq \|F\|_{\L^p(\X)}\|G\|_{\L^q(\X)},
\end{align*}
where we, to get the first inequality, applied H\"o{}lder's inequality for the counting measure.
\end{proof}

\begin{theorem}$\L^p(\X)$ is a Banach space.\end{theorem}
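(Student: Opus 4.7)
The plan is to mimic the standard completeness proof for $L^p$ spaces, but adapted to the supremum-over-finite-disjoint-collections nature of the norm (\ref{Lpnorm}); in fact the argument will be cleaner here because the limit function will be defined pointwise everywhere on $\X$, with no ``almost everywhere'' subtlety.

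First, given a Cauchy sequence $\{F_n\}$ in $\L^p(\X)$, I will pass to a subsequence $\{F_{n_k}\}$ with $\|F_{n_{k+1}}-F_{n_k}\|_{\L^p(\X)} \leq 2^{-k}$. The key observation is that the single-term collection $\{\eta\}$ is admissible in the supremum defining $\|\cdot\|_{\L^p(\X)}$, so for every $\eta \in \X$ we have
\[
|F_{n_{k+1}}(\eta)-F_{n_k}(\eta)| \cdot \|\eta\|^{1/p} \leq \|F_{n_{k+1}}-F_{n_k}\|_{\L^p(\X)} \leq 2^{-k}.
\]
Since $\|\eta\| > 0$, the sequence $\{F_{n_k}(\eta)\}$ is Cauchy in $\R$ for each individual $\eta \in \X$, so I can define $F(\eta) := \lim_{k \to \infty} F_{n_k}(\eta) \in \R$ pointwise on all of $\X$.

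Next, I will verify $F \in \L^p(\X)$ and $F_n \to F$ in norm by a Fatou-type argument applied directly to the supremum definition. For any finite disjoint collection $\eta_1,\ldots,\eta_\ell \in \X$ and any fixed $m$, the continuity of finite sums gives
\[
\left(\sum_{i=1}^\ell |F(\eta_i)-F_m(\eta_i)|^p \|\eta_i\|\right)^{1/p} = \lim_{k \to \infty} \left(\sum_{i=1}^\ell |F_{n_k}(\eta_i)-F_m(\eta_i)|^p \|\eta_i\|\right)^{1/p} \leq \liminf_{k \to \infty} \|F_{n_k}-F_m\|_{\L^p(\X)}.
\]
Taking the supremum over all admissible collections on the left yields $\|F - F_m\|_{\L^p(\X)} \leq \liminf_{k\to\infty}\|F_{n_k}-F_m\|_{\L^p(\X)}$. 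The Cauchy property then shows this can be made arbitrarily small by choosing $m$ large, so $F - F_m \in \L^p(\X)$ and $\|F - F_m\|_{\L^p(\X)} \to 0$; since $F = (F-F_m) + F_m$ and $\L^p(\X)$ is a vector space, $F \in \L^p(\X)$, and $F_m \to F$ in norm.

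I do not anticipate a serious obstacle: the pointwise-convergence step is immediate from the singleton estimate, and the Fatou step requires nothing beyond commuting a limit with a finite sum. The only thing worth being careful about is verifying that the pointwise limit $F$ is genuinely $\R$-valued (not $\pm\infty$), but this is automatic from the real-valuedness of each $F_n$ combined with the real-valued Cauchy estimate above. No measurability or completeness of the underlying set is needed because the norm is defined combinatorially via finite disjoint collections rather than via an integral.
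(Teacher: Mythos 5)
Your proof is correct. The first half (pointwise convergence via the singleton estimate $|F_j(\eta)-F_l(\eta)|\,\|\eta\|^{1/p}\le\|F_j-F_l\|_{\L^p(\X)}$) matches the paper exactly, but the second half takes a genuinely different route. The paper telescopes $F-F_j=\sum_{l\ge j}(F_{l+1}-F_l)$ and invokes its previously established countable subadditivity of the norm (property (3) of the lemma following the definition of $\L^p(\X)$, which itself rests on the monotone-convergence property (2)) to get $\|F-F_j\|_{\L^p(\X)}\le\sum_{l\ge j}2^{-l}$. You instead run a Fatou-type argument directly against the supremum definition: pass the limit $k\to\infty$ through a fixed finite admissible collection, bound by $\liminf_k\|F_{n_k}-F_m\|_{\L^p(\X)}$, then take the sup over collections. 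Your route is arguably more self-contained since it bypasses the countable-subadditivity lemma, needing only that the norm is a sup of finite sums; the paper's route is shorter given that lemma is already in hand, and delivers the explicit rate $\|F-F_j\|\le 2^{1-j}$. One small remark: the initial passage to a subsequence with $\|F_{n_{k+1}}-F_{n_k}\|\le 2^{-k}$ is harmless but unnecessary in your argument, since the singleton estimate already makes $\{F_n(\eta)\}$ Cauchy in $\R$ directly from the Cauchy hypothesis on $\{F_n\}$; the geometric bound is only needed for the paper's telescoping step.
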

\begin{proof}
Suppose $F_j$ is a Cauchy sequence in $\L^p(\X)$, and assume without loss of generality that $\|F_{j+1}-F_j\|_{\L^p(\X)} < 2^{-j}$.
For any $\eta \in \X$ we have
\begin{align*}
&\|F_j -F_l\|^p_{\L^p(\X)} \\
&=\sup\left\{\sum_{i=1}^k |F_j(\eta_i)-F_l(\eta_i)|^p\|\eta_i\|: \eta_i \in \X,\, \supp(\eta_i) \cap \supp(\eta_j) = \emptyset \textrm{ if } i \ne j \right\}\\
&\geq |F_j(\eta)-F_l(\eta)|^p \|\eta\|.
\end{align*}
Hence we see that $F_j(\eta)$ forms a Cauchy sequence in $\R$ for each $\eta \in \X$, and hence $F_j(\eta) \rightarrow F(\eta)$ pointwise on $\X$ for some $F$. Then
\begin{align*}
\|F-F_j\|_{\L^p(\X)} &= \left\|\sum_{l=j}^{\infty}(F_{l+1}-F_l)\right\|_{\L^p(\X)} \\
&\leq \sum_{l=j}^{\infty}\|F_{l+1}-F_l\|_{\L^p(\X)} \leq \sum_{l=j}^{\infty} 2^{-l} = 2^{1-j},
\end{align*}
and hence we see that $F_j \rightarrow F$ in $\L^p(\X)$
\end{proof}

We will mainly be interested in those $F$ which in a natural sense corresponds to functions $f$ on $X$. 
To do so we first of all introduce for $f \in L^1_{\rm loc}(X)$ the function $F_f : \X \rightarrow \R$ by
$$F_f(\eta) = \frac{1}{\|\eta\|}\int f \,d \eta.$$
It is easy to see that $f \mapsto F_f$ is a linear operation, and also that if $F_f(\eta)=F_g(\eta)$ for all $\eta$ then $f=g$ $\mu$-a.e. Also note that
the map 
$$G_{F_f}(\eta) = \|\eta\|F_f(\eta)= \int f \, d\eta$$ 
has a natural extension to $\M$ if we define $G_{F_f}(0)=0$.
In the opposite direction we have the following:
\begin{lemma}\label{FFf}
Suppose  $F: \X \rightarrow \R$ and define $G_F : \M \rightarrow \R$ by 
$$G_F(\eta)=\left\{\begin{array}{ll}
\|\eta\|F(\eta) & \eta \in \X\\
0 & \eta=0,\end{array}\right.$$
Then $F$ is of the form $F_f$ for some $f \in L^{1}_{\rm loc} (X)$ if and only if $G_F$ satisfies
\begin{align}\label{Fadd1}
&G_F(t\eta) = tG_F(\eta) \textrm{ for all } t \in [0,1] \textrm{ and } \eta \in \M,\\ \label{Fadd2}
&G_F\left(\sum_{i=1}^{\infty}  \eta_i \right) = \sum_{i=1}^{\infty}G_F(\eta_i) \textrm{ for all } \eta_i \in \M
\textrm{ such that }\sum_{i=1}^{\infty}  \eta_i \in \M.
\end{align}
\end{lemma}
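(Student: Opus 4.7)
\emph{Forward direction.} If $F = F_f$ with $f \in L^1_{\rm loc}(X)$, then $G_F(\eta) = \int f\,d\eta$ for every $\eta \in \M$ (including $\eta = 0$), so (1) is immediate from linearity of the integral, and (2) follows from monotone convergence applied separately to $f^+$ and $f^-$, since every $\eta = \sum_i \eta_i \in \M$ has compact support $K$ and is dominated by $\mu$, giving $\int |f|\,d\eta \leq \int_K |f|\,d\mu < \infty$.

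\emph{Converse, signed-measure step.} Given (1) and (2), the plan is to recover $f$ through Radon--Nikodym on each compact piece of $X$. Fix a compact $K \subset X$ and define a set function $\nu_K$ on the Borel subsets of $K$ by $\nu_K(A) := G_F(\mu|_A)$; since each $\mu|_A$ lies in $\M$, this is $\R$-valued. For a disjoint Borel decomposition $A = \bigsqcup_i A_i$ in $K$ one has $\mu|_A = \sum_i \mu|_{A_i}$ in $\M$, so (2) gives $\sigma$-additivity of $\nu_K$. The classical fact that a countably additive, $\R$-valued set function on a $\sigma$-algebra has bounded variation (Jordan decomposition) then promotes $\nu_K$ to a finite signed Borel measure on $K$. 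Since $\mu(A)=0$ forces $\mu|_A = 0$ and hence $\nu_K(A) = G_F(0) = 0$, we have $\nu_K \ll \mu|_K$, and Radon--Nikodym supplies $f_K \in L^1(K,\mu)$ with $\nu_K(A) = \int_A f_K\,d\mu$ for every Borel $A \subset K$.

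\emph{Extension, gluing, and main obstacle.} To upgrade from indicators to an arbitrary $\eta = \phi\mu \in \M$ with $\supp\eta \subset K$, I would use the binary expansion $\phi = \sum_{n=1}^{\infty} 2^{-n} \chi_{B_n}$, where $B_n \subset K$ consists of points whose $n$-th binary digit of $\phi$ equals $1$. Then $\eta = \sum_n 2^{-n}\mu|_{B_n}$ with each summand in $\M$ and the whole sum in $\M$, so (1) with scalar $2^{-n} \in [0,1]$ followed by (2) yields
$$G_F(\eta) = \sum_n 2^{-n} G_F(\mu|_{B_n}) = \sum_n 2^{-n} \int_{B_n} f_K\,d\mu = \int f_K\,d\eta.$$
Since $X$ is separable and locally compact (hence $\sigma$-compact), fixing an exhaustion $K_n \nearrow X$ by compacts and using Radon--Nikodym uniqueness (so that $f_{K_{n+1}} = f_{K_n}$ $\mu$-a.e.\ on $K_n$) glues the $f_{K_n}$ into a single $f \in L^1_{\rm loc}(X)$; every $\eta \in \M$ has its compact support in some $K_n$, giving $G_F(\eta) = \int f\,d\eta$ and hence $F = F_f$. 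The main subtlety I expect is that hypothesis (1) only provides scaling by $t \in [0,1]$, ruling out unrestricted $\R$-linearity; the binary-expansion trick is tailored precisely so that every coefficient $2^{-n}$ stays in $[0,1]$ and every intermediate measure remains inside $\M$. A secondary technicality is verifying that (2) really forces $\nu_K$ to have bounded variation (not merely finite additivity), which is exactly what makes the Radon--Nikodym step available.
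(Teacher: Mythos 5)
Your proof is correct, and the overall strategy mirrors the paper's: for each compact $K$, the set function $A \mapsto G_F(\mu|_A)$ on Borel subsets of $K$ is shown (via (2)) to be a countably additive, $\R$-valued, $\mu$-absolutely continuous set function, Radon--Nikodym produces $f_K$, and the local densities are glued by an exhaustion of $X$ by compacta. The difference lies in how the identity $G_F(\phi\mu)=\int\phi f\,d\mu$ is upgraded from indicator densities to general $\phi:X\to[0,1]$. The paper observes that (2) implies order-continuity of $G_F$ (if $\eta_n\nearrow\eta$ in $\M$ then $G_F(\eta_n)\to G_F(\eta)$) and then asserts that this together with (1) and (2) ``is easily seen'' to give the conclusion — implicitly an approximation by simple functions with coefficients in $[0,1]$. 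Your binary-expansion decomposition $\phi=\sum_n 2^{-n}\chi_{B_n}$ achieves the same thing without any limit-taking inside $G_F$: it is an \emph{exact} countable decomposition where (1) need only be invoked at the dyadic scalars $2^{-n}\in[0,1]$ and (2) is applied once, with the final interchange of sum and integral justified by $\sum_n 2^{-n}|\chi_{B_n}f_K|\leq|f_K|\in L^1(K)$. This is a clean way to navigate the restriction in (1) to $t\in[0,1]$, which is precisely the subtlety the paper's phrase ``easily seen'' glosses over. You are also right to flag that the Radon--Nikodym step tacitly relies on the classical fact that an $\R$-valued countably additive set function on a $\sigma$-algebra automatically has bounded variation (Jordan decomposition); the paper invokes this silently. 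Both routes are sound; yours is slightly more explicit and self-contained at exactly the places the paper compresses.
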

\begin{proof} That any $F$ of the form $F_f$ satisfies (\ref{Fadd1}) and (\ref{Fadd2}) is obvious.

To prove the opposite direction we note that for any fixed compact set $K \subset X$ the map
$$\gamma_K(A) = G_F(\mu|_A),$$
defined for all Borel sets $A \subset K$ by assumption satisfies
\begin{align*}
& \gamma_K(A)= 0 \textrm{ for all } A \subset X \textrm{ such that } \mu(A)=0,\\
& \gamma_K(\bigcup_{i=1}^{\infty} A_i) = \sum_{i=1}^{\infty} \gamma_K(A_i) \textrm{ for all disjoint families } A_i \subset K. 
\end{align*}
Hence $\gamma_K = f_K \mu|_K$ for some $f_K \in L^1(K)$. But if $K_1$ and $K_2$ are two  different compact sets, then since 
$\gamma_{K_1}(A) = \gamma_{K_2}(A)$ for all Borel sets $A \subset K_1 \cap K_2$ we see that $f_{K_1} = f_{K_2}$ on this intersection a.e. From this we may easily conclude that there is some $f \in L^1_{\rm loc}(X)$ such that $\gamma_K = f \mu|_K$ for any compact subset $K $ of $X$.

But this means in particular that 
$$G_F(\mu|_K) = \int_K f \,d\mu,$$
for any compact set $K$ in $X$. 
Also note that due to (\ref{Fadd2}) $G_F$ is order-continuous in the sense that if $\eta_n$ increases to $\eta$ then $G_F(\eta_n) \rightarrow G_F(\eta)$ as $n \rightarrow \infty$.
Together with (\ref{Fadd1}) and (\ref{Fadd2}) this is easily seen to imply that $F = F_f$.
\end{proof}

\begin{theorem} \label{eqlpnorm}
For any function $f \in L^1_{\rm loc}(X)$ we have that $f \in L^p(X)$ if and only if $F_f \in L^p(\X)$, and in that case $\|F_f\|_{\L^p(\X)} = \|f\|_{L^p(X)}$. 
\end{theorem}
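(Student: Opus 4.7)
The plan is to establish the norm identity $\|F_f\|_{\L^p(\X)}=\|f\|_{L^p(X)}$ for every $f\in L^1_{\rm loc}(X)$, with the convention that both quantities may equal $+\infty$; the iff and the norm equality claimed in the theorem follow immediately. I handle the two inequalities separately.

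The upper bound is a direct consequence of Jensen's inequality: for $\eta\in\X$ the measure $\eta/\|\eta\|$ is a probability measure and $t\mapsto|t|^p$ is convex, so $|F_f(\eta)|^p\|\eta\|\leq\int|f|^p\,d\eta$. If $\eta_1,\dots,\eta_k\in\X$ have pairwise disjoint supports, then $\sum_i\eta_i\leq\mu$, and summing gives
$$\sum_{i=1}^k|F_f(\eta_i)|^p\|\eta_i\|\leq \int|f|^p\,d\Bigl(\sum_i\eta_i\Bigr)\leq \int|f|^p\,d\mu,$$
so taking the supremum yields $\|F_f\|_{\L^p(\X)}\leq\|f\|_{L^p(X)}$.

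For the lower bound I first treat $f\in C_c(X)$. Local compactness allows us to find a compact $K'$ containing $\supp f$ in its interior, and then $f$ is uniformly continuous on $X$. Given $\vae>0$, partition $K'$ into finitely many Borel sets $A_j$ of diameter less than the associated modulus of continuity, and set $\eta_j=\mu|_{A_j}$ (discarding any piece with $\mu(A_j)=0$). Then $|F_f(\eta_j)-f(x_j)|\leq\vae$ for any choice $x_j\in A_j$, so $\sum_j|F_f(\eta_j)|^p\mu(A_j)$ differs by $O(\vae)$ (with constant depending on $\|f\|_\infty$ and $p$) from the Riemann-type sum $\sum_j|f(x_j)|^p\mu(A_j)$, which converges to $\int|f|^p\,d\mu$ as the mesh tends to zero. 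Thus $\|F_f\|_{\L^p(\X)}\geq\|f\|_{L^p(X)}$, and combined with the upper bound we obtain equality on $C_c(X)$. Density of $C_c(X)$ in $L^p(X)$ then extends the equality to every $f\in L^p(X)$: pick $f_n\in C_c(X)$ with $f_n\to f$ in $L^p$, apply the upper bound to the difference $f-f_n$ to get $F_{f_n}\to F_f$ in $\L^p(\X)$, and use continuity of the norm.

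Finally, for general $f\in L^1_{\rm loc}(X)$ I truncate: let $K_n\nearrow X$ be a compact exhaustion, $E_n=\{|f|\leq n\}\cap K_n$, and $g_n=f\chi_{E_n}\in L^p(X)$, so $|g_n|\nearrow|f|$ and $\|g_n\|_{L^p(X)}\nearrow\|f\|_{L^p(X)}$ by monotone convergence. The previous step gives $\|F_{g_n}\|_{\L^p(\X)}=\|g_n\|_{L^p(X)}$. For any disjoint-support $\eta_i\in\X$, set $\tilde\eta_i=\eta_i|_{E_n}$ and $\alpha_i=\|\tilde\eta_i\|/\|\eta_i\|\in[0,1]$; since $g_n=f$ on $E_n$ and $g_n=0$ off $E_n$, $F_{g_n}(\eta_i)=\alpha_i F_f(\tilde\eta_i)$, whence
$$|F_{g_n}(\eta_i)|^p\|\eta_i\|=\alpha_i^{p-1}|F_f(\tilde\eta_i)|^p\|\tilde\eta_i\|\leq|F_f(\tilde\eta_i)|^p\|\tilde\eta_i\|$$
using $p\geq 1$. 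The $\tilde\eta_i$ remain pairwise disjoint in support, so summing and taking the supremum yields $\|g_n\|_{L^p(X)}=\|F_{g_n}\|_{\L^p(\X)}\leq\|F_f\|_{\L^p(\X)}$; letting $n\to\infty$ gives $\|f\|_{L^p(X)}\leq\|F_f\|_{\L^p(\X)}$. The principal technical point is the $C_c$ Riemann-sum argument, which requires the uniform continuity available from local compactness; once that is in place, the density and truncation reductions are routine.
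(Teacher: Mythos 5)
Your proposal is correct and follows the same spine as the paper's proof: Jensen's inequality for the bound $\|F_f\|_{\L^p(\X)}\leq\|f\|_{L^p(X)}$, and approximation by compactly supported continuous functions for the reverse. Two remarks on the comparison. First, the paper's proof stops at showing the equality on a dense subset of $L^p(X)$ and transferring it by continuity; it never explicitly addresses the direction $\|F_f\|_{\L^p(\X)}<\infty\Rightarrow f\in L^p(X)$, whereas your truncation argument (cutting to $g_n=f\chi_{E_n}$, using $F_{g_n}(\eta_i)=\alpha_iF_f(\tilde\eta_i)$ with $\alpha_i\in[0,1]$ and $p\geq1$, and passing to the sup over disjoint families) is a clean and complete way to get the lower bound for arbitrary $f\in L^1_{\rm loc}(X)$; this fills a genuine gap that the paper leaves implicit. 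Second, a small technical point in the $C_c$ Riemann-sum step that both you and the paper gloss over: the definition of $\|\cdot\|_{\L^p(\X)}$ requires the measures $\eta_j$ to have pairwise \emph{disjoint compact supports}, and if $A_j$ are merely disjoint Borel sets partitioning $K'$ the closed supports of $\mu|_{A_j}$ can still meet along boundaries. The standard fix is to invoke inner regularity of $\mu$ and replace each $A_j$ by a compact $K_j\subset A_j$ with $\mu(A_j\setminus K_j)$ arbitrarily small; then the $K_j$ are pairwise disjoint compacta (hence at positive distance from one another), the supports of $\mu|_{K_j}$ are disjoint, and the Riemann-sum estimate survives the extra $O(\vae)$ loss. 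With that routine adjustment your argument is fully rigorous and, on the ``only if'' direction, more complete than what the paper records.
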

\begin{proof}
We need to prove that 
$$\|f\|_{L^p(X)} = \sup\left\{ \left(\sum_{i=1}^k |F_f(\eta_i)|^p\|\eta_i\|\right)^{1/p}: \eta_i \in \X,\, \supp(\eta_i) \cap \supp(\eta_j) = \emptyset \textrm{ if } i \ne j\right\}.$$
From Jensen's inequality we get
\begin{align*}
&\left(\sum_{i=1}^k |F_f(\eta_i)|^p \|\eta_i\|\right)^{1/p} = \left(\sum_{i=1}^k \left| \frac{1}{\|\eta_i\|}\int f \, d \eta_i \right|^p \|\eta_i\|\right)^{1/p} \\
&\leq \left(\sum_{i=1}^k  \frac{1}{\|\eta_i\|}\int |f|^p \, d \eta_i \|\eta_i\|\right)^{1/p} \leq \left(\int |f|^p \, d \mu\right)^{1/p}.
\end{align*}
Hence $\|F_f\|_{\L^p(\X)} \leq \|f\|_{L^p(X)}$.

Suppose now that $\|f-f_n\|_{L^p(X)} \leq \vae$. Then we get
\begin{align*}
&\|F_f\|_{\L^p(\X)} = \|F_{f_n} - F_{f_n-f}\|_{\L^p(\X)} \geq \|F_{f_n}\|_{\L^p(\X)}-\|F_{f_n-f}\|_{\L^p(\X)} \\
&\geq \|F_{f_n}\|_{\L^p(\X)} - \|f_n-f\|_{L^p(X)}.
\end{align*}
In case we have $\|F_{f_n}\|_{\L^p(\X)} = \|f_n\|_{L^p(X)}$, then it would follow from the above that 
$$\|F_f\|_{\L^p(\X)} \geq \|f_n\|_{L^p(X)} -\vae \geq \|f\|_{L^p(X)}- 2\vae.$$
It is therefore enough to prove the statement for a dense subset of $L^p(X)$. It is however easy to verify the statement in case $f$ is continuous with compact support, and hence we get the result.
\end{proof}
We may now introduce
$$L^p(\X) = \{F \in \L^p(\X) : F \textrm{ satisfies } (\ref{Fadd1}) \textrm{ and } (\ref{Fadd2})\} =\{F_f: f \in L^p(X)\}.$$
Since $F_n=F_{f_n}$ is Cauchy in $\L^p(\X)$ if and only if $f_n$ is Cauchy in $L^p(X)$ it is clear that $L^p(\X)$ forms a closed subspace of $\L^p(\X)$, and in particular forms a Banach space itself with the same norm.
\begin{theorem} \label{contcurves}
If $f \in L^1_{\rm loc}(X)$ then $F_f$ is continuous along curves in $\WR(\X)$.
\end{theorem}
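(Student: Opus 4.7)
The plan is to reduce continuity of $F_f$ along a curve $\eta \in \WR(\X)$ to convergence of the integrals $\int f \, d\eta(s)$ against a fixed $L^1_{\rm loc}$ density, and then invoke the already-established relationship between convergence in $(\M, d_\M)$ and weak$^*$ convergence of densities.

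Concretely, fix $\eta \in \WR(\X)$ and $t \in [0, b_\eta]$. By Theorem \ref{rectcont}, the total mass $\|\eta(s)\|$ is a constant $c > 0$ on $[0, b_\eta]$, and part (2) of that theorem together with Lemma \ref{K_t} (applied to the compact sets $K(s) = \supp(\eta(s))$) shows that
$$K := \bigcup_{s \in [0, b_\eta]} \supp(\eta(s))$$
is a compact subset of $X$. In particular every $\eta(s)$ belongs to the compact subset $\K = \{\nu \in \M : \supp(\nu) \subset K\}$ of $\M$.

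Since $\eta : [0, b_\eta] \rightarrow \M$ is $1$-Lipschitz by definition of $\WR(\M)$, we have $\eta(s) \rightarrow \eta(t)$ in $(\M, d_{\M})$ as $s \rightarrow t$. By the Proposition identifying $d_\M$-convergence with weak$^*$ convergence on $\K$ (together with the remark following it, which guarantees that $d_\M$-convergence of uniformly compactly supported measures implies $\int g \, d\eta(s) \rightarrow \int g \, d\eta(t)$ for every $g \in L^1_{\rm loc}(X)$), we get
$$\int f \, d\eta(s) \longrightarrow \int f \, d\eta(t) \quad \text{as } s \rightarrow t.$$
Dividing by the constant $c = \|\eta(s)\| = \|\eta(t)\|$ yields $F_f(\eta(s)) \rightarrow F_f(\eta(t))$, which is the claim.

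There is essentially no hard step here: all of the real work has been done in Theorem \ref{rectcont}, Lemma \ref{K_t}, and the Proposition relating $d_\M$- and weak$^*$-convergence on compact subsets. The only mild subtlety is justifying that the weak$^*$ limit can be tested against an arbitrary $L^1_{\rm loc}$ function rather than only continuous ones, but since all of the $\eta(s)$ are dominated by $\mu|_K$ with $K$ compact, the function $f$ may be replaced by $f \chi_K \in L^1(X)$, and the density form $\eta(s) = \phi_s \mu$ with $0 \le \phi_s \le \chi_K$ makes the passage to the limit in $\int f \phi_s \, d\mu$ an immediate consequence of weak$^*$ convergence in $L^\infty(\mu)$.
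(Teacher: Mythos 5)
Your proof is correct and takes essentially the same approach as the paper: the paper also reduces the claim to the compactness of $\bigcup_{s}\supp(\eta(s))$ together with weak$^*$ convergence in $L^\infty(\mu)$ of the densities along the curve, with the constant total mass allowing the passage from $\int f\,d\eta(s)$ to $F_f(\eta(s))$. You have merely spelled out the supporting citations (Theorem \ref{rectcont}, Lemma \ref{K_t}, the compactness proposition and its remark) that the paper invokes implicitly.
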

\begin{proof} This is a direct consequence of the fact that for any  $\eta \in \WR(\X)$ the set $\cup_{t \in [0,b_{\eta}]} \supp(\eta(t))$ is compact and the densities $\phi(s,\cdot)$, where $\eta(s)=\phi(s,\cdot)\mu$, converges weak$^*$ to $\phi(0,\cdot)$ in $L^{\infty}(X)$ as $s \rightarrow 0$. 
\end{proof}
\begin{theorem}
Suppose $F \in \L^p(\X)$ is non-negative. Then $\|F\|_{\L^{p}(\X)} = \|\breve{F}\|_{\L^p(\X)}$.
\end{theorem}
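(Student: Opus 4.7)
The easy direction $\|F\|_{\L^p(\X)} \leq \|\breve F\|_{\L^p(\X)}$ is immediate: taking the constant curve $\gamma(r) \equiv \eta$, which lies in $\WR(\X)$, shows $\breve F(\eta) \geq F(\eta)$ pointwise, so since both functions are non-negative the monotonicity property of $\|\cdot\|_{\L^p(\X)}$ gives the inequality.

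For the reverse inequality, I propose to bound, for an arbitrary finite family $\eta_1,\ldots,\eta_k \in \X$ with pairwise disjoint supports, the sum $\sum_{i=1}^k |\breve F(\eta_i)|^p\|\eta_i\|$ by $\|F\|_{\L^p(\X)}^p$. The key geometric observation is that since each $\supp(\eta_i)$ is compact and the $\supp(\eta_i)$ are pairwise disjoint, the number
\[
2\delta := \min_{i \ne j} \mathrm{dist}(\supp(\eta_i),\supp(\eta_j))
\]
is strictly positive. Therefore the $\delta$-enlargements $(\supp(\eta_i))_\delta$ are still pairwise disjoint. By Lemma~\ref{maxalc1} applied in the metric space $\X$, for each $i$ I can pick $\nu_i \in \WR(\X)$ with $\nu_i(0)=\eta_i$ and $\breve F(\eta_i)=\limsup_{s\to 0}F(\nu_i(s))$; then select a sequence $s_i^n \to 0^+$, with $s_i^n < \delta$ eventually, along which $F(\nu_i(s_i^n)) \to \breve F(\eta_i)$ in $[0,\infty]$.

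Now the two structural properties of rectifiable curves in $\M$ do the work. By Theorem~\ref{rectcont}(1), $\|\nu_i(s_i^n)\|=\|\eta_i\|$; in particular $\nu_i(s_i^n)\in\X$. By Theorem~\ref{rectcont}(2), $\supp(\nu_i(s_i^n)) \subset (\supp(\eta_i))_{s_i^n}\subset (\supp(\eta_i))_\delta$, so for each fixed $n$ the measures $\nu_1(s_1^n),\ldots,\nu_k(s_k^n)$ still have pairwise disjoint supports. The definition of $\|F\|_{\L^p(\X)}$ therefore yields, for every $n$,
\[
\sum_{i=1}^k |F(\nu_i(s_i^n))|^p \|\eta_i\| \leq \|F\|_{\L^p(\X)}^p.
\]
Since $\|F\|_{\L^p(\X)}^p$ is finite, no $\breve F(\eta_i)$ can equal $\infty$ (else the $i$-th term would blow up); taking $n\to\infty$ in this finite sum gives $\sum_i |\breve F(\eta_i)|^p \|\eta_i\| \leq \|F\|_{\L^p(\X)}^p$, and then supremum over disjoint families yields $\|\breve F\|_{\L^p(\X)} \leq \|F\|_{\L^p(\X)}$.

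The main obstacle I anticipated was preserving the disjointness condition required by the definition of the $\L^p$-norm when passing from the $\eta_i$ to the approximating $\nu_i(s_i^n)$, since the definition of $\breve F$ gives no a priori control over how far the supports may wander. This is precisely what the support-propagation estimate in Theorem~\ref{rectcont}(2), combined with the positive pairwise distance between disjoint compact supports, resolves cleanly.
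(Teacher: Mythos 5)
Your proof is correct and follows essentially the same approach as the paper's: both reduce to a single disjointly-supported family $\eta_1,\ldots,\eta_k$, approximate each $\breve F(\eta_i)$ by values $F$ takes along a short curve emanating from $\eta_i$, and invoke Theorem~\ref{rectcont} (constant mass, slow support propagation) to keep the approximating measures disjointly supported. Your version is if anything a bit more explicit in quantifying disjointness via the $X$-distance between supports and in invoking Lemma~\ref{maxalc1} rather than the raw $\vae$--$\delta$ form of the definition of $\breve F$, but these are presentational differences, not a different route.
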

\begin{proof}
Obviously $\|F\|_{\L^{p}(\X)} \leq \|\breve{F}\|_{\L^p(\X)}$.
To prove the opposite inequality it is enough to show that 
$$\left(\sum_{i=1}^k |\breve{F}(\eta_i)|^p\|\eta_i\| \right)^{1/p} \leq \|F\|_{\L^{p}(\X)}$$
for all $\eta_i \in \X$ such that $\supp(\eta_i) \cap \supp(\eta_j)=\emptyset$ if $i \ne j$. 
Since the $\eta_i$ have disjoint compact supports there is $\vae>0$ which is smaller than the distance between all these as elements in $\M$. For any such $\vae$
we may hence choose curves $\eta_i(s)$ such that $\eta_i(0)=\eta_i$ and $\breve{F}(\eta_i) \leq  F(\eta_i(s_i))+\vae/(k\|\eta_i\|)^{1/p}$
for some $s_i \in [0,\vae/3]$ for instance.
 
By construction, and an application of Theorem \ref{rectcont}, we see that the measures $\eta_i'=\eta_i(s_i)$ have disjoint supports and we get
\begin{align*}
&\left(\sum_{i=1}^k |\breve{F}(\eta_i)|^p\|\eta_i\|\right)^{1/p} \leq \left(\sum_{i=1}^k \left|F(\eta_i')+\frac{\vae}{(k\|\eta_i\|)^{1/p}} \right|^p\|\eta_i\|\right)^{1/p}\\
&\leq \left(\sum_{i=1}^k |F(\eta_i')|^p \|\eta_i'\|\right)^{1/p} + \vae \leq \|F\|_{\L^p(\X)} + \vae.
\end{align*}
Since $\vae>0$ is arbitrary we get the result.
\end{proof}
\subsection{The space $\L^p_{\rm loc}(\X)$}\label{lploc}
It will be convenient to also have local spaces, and they are defined in essentially the obvious way. Suppose $K \subset X$ is compact, and let 
$$\K =\{\eta \in \X : {\rm supp}(\eta) \subset K\} \subset \X.$$
We may then regard $(K,d_X,\mu|_K)$ as our space, and we define $\L^p(\K)$ as above for each such $K$. We then say that $F : \X \rightarrow \R$ belongs to $\L^p_{\rm loc}(\X)$ if the restriction to $\K$ belongs to $\L^p(\K)$ for each compact subset $K$ of $X$, and similarly for the spaces $L^p_{\rm loc}(\X)$.

Note that the natural analogue of Theorem \ref{eqlpnorm} still holds in this situation in the following sense:
\begin{theorem} \label{eqlpnorm2}
For any function $f \in L^1_{\rm loc}(X)$ we have that $f \in L^p_{\rm loc}(X)$ if and only if $F_f \in L^p_{\rm loc}(\X)$, and in that case $\|F_f\|_{\L^p(\K)} = \|f\|_{L^p(K)}$ for each compact subset $K$ of $X$. 
\end{theorem}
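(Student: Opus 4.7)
The plan is to reduce the local statement to the already-proven Theorem \ref{eqlpnorm} via a simple truncation. Fix a compact $K \subset X$ and set $g := f \chi_K$. Since $f \in L^1_{\rm loc}(X)$ and $\mu(K) < \infty$, $g$ belongs to $L^1(X)$ and in particular to $L^1_{\rm loc}(X)$, with the obvious equivalence $g \in L^p(X) \Leftrightarrow f|_K \in L^p(K)$ and matching norms $\|g\|_{L^p(X)} = \|f\|_{L^p(K)}$.

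First I would note that $F_f$ and $F_g$ agree on $\K$: any $\eta \in \K$ is supported in $K$, so $\int g\, d\eta = \int f\, d\eta$. This gives $\|F_f\|_{\L^p(\K)} = \|F_g\|_{\L^p(\K)}$ directly from the definition of the $\L^p(\K)$-norm.

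The core technical claim is then $\|F_g\|_{\L^p(\X)} = \|F_g\|_{\L^p(\K)}$. The inequality $\geq$ is trivial because the supremum defining $\|F_g\|_{\L^p(\X)}$ ranges over a strictly larger family of disjoint tuples. For the reverse, given any admissible disjoint $\eta_1, \ldots, \eta_k \in \X$, set $\eta_i' := \eta_i|_K \in \M$. Since $g$ vanishes off $K$, one has $\int g\, d\eta_i = \int g\, d\eta_i'$, and when $\eta_i' \neq 0$ a short computation yields
\begin{equation*}
|F_g(\eta_i)|^p \|\eta_i\| = \left(\frac{\|\eta_i'\|}{\|\eta_i\|}\right)^{p-1} |F_g(\eta_i')|^p \|\eta_i'\| \leq |F_g(\eta_i')|^p \|\eta_i'\|,
\end{equation*}
where the inequality uses $\|\eta_i'\| \leq \|\eta_i\|$ together with $p \geq 1$ (the $i$-th term vanishes trivially if $\eta_i' = 0$). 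The nonzero $\eta_i'$ still have pairwise disjoint supports contained in $K$, so the sum is bounded by $\|F_g\|_{\L^p(\K)}^p$.

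Combining these observations with Theorem \ref{eqlpnorm} applied to $g$ then produces
\begin{equation*}
\|F_f\|_{\L^p(\K)} = \|F_g\|_{\L^p(\K)} = \|F_g\|_{\L^p(\X)} = \|g\|_{L^p(X)} = \|f\|_{L^p(K)},
\end{equation*}
which both establishes the asserted identity and, by ranging over all compact $K$, yields the equivalence $f \in L^p_{\rm loc}(X) \Longleftrightarrow F_f \in L^p_{\rm loc}(\X)$. I do not foresee any real obstacle: the entire content sits in the supremum-reduction step, which rests on the elementary monotonicity $(\|\eta_i'\|/\|\eta_i\|)^{p-1} \leq 1$ valid for $p \geq 1$.
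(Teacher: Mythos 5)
Your proof is correct, and in fact the paper states Theorem~\ref{eqlpnorm2} without proof, merely remarking that it is ``the natural analogue'' of Theorem~\ref{eqlpnorm}. The tacit suggestion there is to repeat the proof of Theorem~\ref{eqlpnorm} with the ambient space replaced by $(K,d_X,\mu|_K)$; that route has a small wrinkle, since a compact $K\subset X$ need not satisfy the paper's standing hypothesis that every ball has positive measure (take $X=\R$ and $K=\{0\}\cup[1,2]$, say), and so Theorem~\ref{eqlpnorm} as stated does not literally apply to $(K,d_X,\mu|_K)$. Your truncation argument sidesteps this entirely: you stay in the ambient metric measure space, apply Theorem~\ref{eqlpnorm} to $g=f\chi_K$ there, and reduce the problem to the one genuine estimate $\|F_g\|_{\L^p(\X)}\le\|F_g\|_{\L^p(\K)}$. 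That estimate is handled cleanly by restricting each test measure, $\eta_i'=\eta_i|_K\in\K$, observing $\int g\,d\eta_i=\int g\,d\eta_i'$, and using $(\|\eta_i'\|/\|\eta_i\|)^{p-1}\le 1$ for $p\ge 1$; the degenerate case $\eta_i'=0$ contributes nothing, and disjointness of supports is inherited. The identification $F_f=F_g$ on $\K$ and the final chain of equalities are immediate. Two minor points you might make explicit for completeness: that $\eta_i'=\eta_i|_K$ is indeed in $\M$ because $\eta_i|_K\le\eta_i\le\mu$ and $K$ is closed, so $\supp(\eta_i')\subset K$; and that, for the ``if and only if'' part, $F_f|_{\K}$ automatically satisfies the additivity and homogeneity conditions (\ref{Fadd1})--(\ref{Fadd2}) on $\K$ since it equals $F_{f|_K}$, so membership in $L^p(\K)$ reduces to finiteness of the norm. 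Neither is a gap, just a sentence each.
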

\section{Upper gradients}\label{upgrad}
Let $F : \X \rightarrow \R$ and $\eta \in \M$.
We introduce the $\WR(\X)$-upper gradients as follows. If $\vae>0$ then we put
$$r_F^{\vae}(\eta) := \sup\left\{\frac{|F(\nu(s)) -F(\eta)|}{s} : 
 \nu \in \WR(\X) \textrm{ such that } \nu(0)=\eta \textrm{ and } 0< s < \vae \wedge b_{\nu} \right\},$$
and then we define
$$r_F(\eta)= r_F^{0}(\eta) = \lim_{\vae \rightarrow 0} r_F^{\vae}(\eta).$$
Since $r^{\vae}_F$ decreases as $\vae$ decreases this is well defined. Also note that 
$\nu(s)=\eta$ for all $s$ is an element of $\WR(\X)$, so the definition always makes sense, even if there are no non-constant rectifiable curves starting at $\eta$. In this case it is furthermore clear that we would have $r_F(\eta)=0$. 

Note in particular that for any $\nu \in \WR(\X)$ and $0<|s-t| < \vae$ we have
$$\left| \frac{F(\nu(s))-F(\nu(t))}{s-t}\right| \leq r_F^{\vae}(\nu(t)),$$
hence
$$\limsup_{s \rightarrow t} \left| \frac{F(\nu(s))-F(\nu(t))}{s-t}\right| \leq r_F^{\vae}(\nu(t)),$$
and since this holds for any $\vae >0$ it also holds for $\vae=0$.

It is also clear that 
$$\lim_{\delta \rightarrow \vae^-} r^{\delta}_F(\eta) =r^{\vae}_F(\eta).$$
\begin{remark}\label{contcrem}
Suppose for a given $\eta \in \X$ that $r_F(\eta) < \infty.$ Then by definition there is some $\vae >0$ such that $r_F^{\vae}(\eta) < \infty$.
In case $\nu \in \WR(\M)$ is such that $\nu(0) = \eta$, then it follows more or less immediately from the definition, since $|F(\nu(s)) - F(\eta)| \leq r_F^{\vae}(\eta) s$ for all $s < \vae \wedge b_{\nu}$,  that $\lim_{s \rightarrow 0} F(\nu(s)) = F(\eta)$.
\end{remark}
\begin{remark}
It is easy to see that the definition would not change if we replaced $\WR(\X)$ by $\RR(\X)$ since if we reparametrize a curve in $\WR(\X)$ so it becomes parametrized by arc-length, then the corresponding map $r : [0,l_{\nu}] \rightarrow [0,b_{\nu}]$ satisfies $r(t) \geq t$ by definition.

The only reason we choose to work with $\WR(\X)$ instead is that it behaves better when we decompose curves such as in Proposition \ref{split2}.
\end{remark}

\begin{lemma}\label{maxalc}
If $F: \X \rightarrow \R$ and $\eta \in \X$, then there is a curve $\nu$ in $\WR(\X)$ such that $\nu(0)=\eta$ and
$$r_F(\eta) = \limsup_{s \rightarrow 0} \left|\frac{F(\nu(s))-F(\nu(0))}{s}\right|$$
\end{lemma}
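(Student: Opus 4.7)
The plan is to mimic the patching construction from Lemma \ref{maxalc1}, but now approximating $r_F(\eta)$ by the \emph{ratio} $|F(\nu_n(\delta_n))-F(\eta)|/\delta_n$ rather than approximating a single function value. If the only curve in $\WR(\X)$ starting at $\eta$ is the constant one, then $r_F(\eta)=0$ and the constant curve trivially works, so I assume otherwise.

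Since $r_F(\eta)=\inf_{\vae>0} r_F^{\vae}(\eta)$ and, by definition, $r_F^{\vae}(\eta)$ is a supremum of ratios of the form $|F(\nu(s))-F(\eta)|/s$ with $\nu\in\WR(\X)$, $\nu(0)=\eta$ and $0<s<\vae\wedge b_\nu$, I inductively pick a sequence $\vae_n\searrow 0$, curves $\nu_n\in\WR(\X)$ with $\nu_n(0)=\eta$, and times $\delta_n\in(0,\vae_n\wedge b_{\nu_n})$ such that
$|F(\nu_n(\delta_n))-F(\eta)|/\delta_n \ge r_F(\eta)-1/n$ when $r_F(\eta)<\infty$, and $\ge n$ when $r_F(\eta)=\infty$. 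The inductive freedom in choosing $\vae_n$ lets me demand further that $\vae_{n+1}\le \delta_n/(n+1)$, which forces $\delta_{n+1}\le \delta_n/(n+1)$; in particular $\sum_j \delta_j<\infty$ and, crucially, $\sum_{j>n}\delta_j = o(\delta_n)$.

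Now patch the curves exactly as in Lemma \ref{maxalc1}: with $k_n=\delta_n+2\sum_{j<n}\delta_j$ and $b=2\sum_j\delta_j$, define $\gamma:[0,b]\to\X$ so that on $[k_n-\delta_n,k_n]$ it traces $\nu_n$ from $\eta$ to $\nu_n(\delta_n)$, on $[k_n,k_n+\delta_n]$ it retraces back to $\eta$, and $\gamma(b)=\eta$. Each piece is $1$-Lipschitz and consecutive pieces meet at $\eta$, so $\gamma$ is $1$-Lipschitz; moreover $\|\gamma(t)\|=\|\eta\|$ throughout (each $\nu_n$ has constant mass by Theorem \ref{rectcont}), so $\gamma$ takes values in $\X$. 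Setting $\nu(t):=\gamma(b-t)$ gives $\nu\in\WR(\X)$ with $\nu(0)=\eta$ and $\nu(r_n)=\nu_n(\delta_n)$, where $r_n=\delta_n+2\sum_{j>n}\delta_j$.

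By the rapid decay $r_n/\delta_n\to 1$, and so
\begin{equation*}
\frac{|F(\nu(r_n))-F(\eta)|}{r_n} \;=\; \frac{|F(\nu_n(\delta_n))-F(\eta)|}{\delta_n}\cdot\frac{\delta_n}{r_n} \;\longrightarrow\; r_F(\eta)
\end{equation*}
as $n\to\infty$ in both the finite and infinite cases, yielding $\limsup_{s\to 0}|F(\nu(s))-F(\eta)|/s\ge r_F(\eta)$. The reverse inequality is automatic: for every $\vae>0$ and every $s\in(0,\vae\wedge b_\nu)$ the ratio $|F(\nu(s))-F(\eta)|/s$ is one of the quantities appearing in the supremum defining $r_F^{\vae}(\eta)$, so the $\limsup$ as $s\to 0$ is bounded by $r_F^{\vae}(\eta)$, and letting $\vae\to 0$ gives equality. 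The only real obstacle, beyond copying the patching from Lemma \ref{maxalc1}, is arranging the decay $\delta_{n+1}\ll\delta_n$ so that the ratio selected at time $\delta_n$ survives as an essentially equal ratio at the slightly shifted time $r_n$ along the patched curve.
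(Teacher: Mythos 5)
Your proof is correct and follows essentially the same construction as the paper's: the same back‑and‑forth patching of approximating curves $\nu_n$ into a single curve $\nu$ (as in Lemma \ref{maxalc1}), with a rapidly decaying sequence $\delta_n$ forcing $r_n/\delta_n\to 1$, and then evaluating the limsup along the times $r_n$. The only cosmetic differences are the decay rate ($\vae_{n+1}\le\delta_n/(n+1)$ instead of $2^{-n}\delta_n$) and approximating $r_F(\eta)$ directly rather than $r_F^{\vae_n}(\eta)$ in condition (3), neither of which changes the substance.
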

\begin{proof}
This proof is more or less analogous to that of Lemma \ref{maxalc1}, but we give the details for completeness.
If $r_F(\eta)=0$ then the result is trivial.
Assume now that $0 < r_F(\eta) < \infty$. Then we may by definition inductively choose sequences $\delta_n,\vae_n,$ decreasing to zero and curves $\nu_n \in \WR(\X)$ such that
\begin{itemize}
\item[(1)] $0< \delta_n \leq \vae_n$,
\item[(2)] $\nu_n(0)= \eta$,
\item[(3)] $r_F^{\vae_n}(\eta) \leq \left|\frac{F(\nu_n(\delta_n))-F(\nu(0))}{\delta_n}\right|+\frac{1}{n}$,
\item[(4)] $\vae_{n+1} \leq 2^{-n} \delta_n$.
\end{itemize}
Also put
$$k_n= \delta_n + 2\sum_{j=1}^{n-1} \delta_j, \quad b=2\sum_{j=1}^{\infty} \delta_j.$$
We define $\gamma: [0,b] \rightarrow \X$ such that 
$$\gamma(t)= \left\{ 
\begin{array}{ll} 
\nu_n(t-k_n+\delta_n), & t \in [k_n-\delta_n,k_n)\\
\nu_n(k_n+\delta_n - t), & t \in [k_n,k_n+\delta_n)\\
\eta, & t=b.\end{array}\right.$$
Finally put $\nu(t)=\gamma(b-t)$. It is easy to verify that $\nu \in \WR(\X)$ ($\gamma$ simply consists of rectifiable curves subparametrized by arclength going back to forth from $\eta$ and then patched together). If we put 
$$r_n=\delta_n + 2 \sum_{j=n+1}^{\infty} \delta_j,$$
then 
$$\nu(r_n)=\gamma(b-r_n) =\gamma\left(\delta_n+2\sum_{j=1}^{n-1}\delta_j\right) = \gamma(k_n) =\nu_n(\delta_n).$$
Also note that
$$\lim_{n \rightarrow \infty} \frac{\delta_n}{r_n}=1.$$
Hence we get
\begin{align*}
&\limsup_{s \rightarrow 0} \left|\frac{F(\nu(s))-F(\nu(0))}{s}\right|  \\
&\geq\limsup_{n \rightarrow \infty} \left|\frac{F(\nu(r_n))-F(\nu(0))}{r_n}\right|=
\limsup_{n \rightarrow \infty} \left|\frac{F(\nu(r_n))-F(\nu(0))}{r_n}\right|+ \limsup_{n \rightarrow 0} \frac{1}{n} \\
& = \limsup_{n \rightarrow \infty} \left(\left|\frac{F(\nu_n(\delta_n))-F(\nu_n(0))}{\delta_n} \right| + \frac{r_n}{n\delta_n}\right)\frac{\delta_n}{r_n} \geq \limsup_{n \rightarrow 0} r_F^{\vae_n}(\eta)\frac{\delta_n}{r_n} =r_F(\eta).
\end{align*}
The case $r_F(\eta)=\infty$ is treated similarly but replacing (3) above by 
$$\left|\frac{F(\nu_n(\delta_n))-F(\nu(0))}{\delta_n}\right| \geq n.$$
\end{proof}
\begin{theorem}\label{fundth2}
If $\nu \in \WR(\X)$, $s \in [0,b_{\nu}]$ and $F: \X \rightarrow \R$ then 
$$|F(\nu(s))-F(\nu(0))| \leq \int_0^s \breve{r}_F(\nu(t))\,dt.$$
Furthermore, if $\eta \in \X$ and $g : \X \rightarrow [0,\infty)$ satisfies

\begin{align*}
&\limsup_{s \rightarrow 0}g(\nu(s)) \leq g(\eta),\\
&|F(\nu(s))-F(\nu(0))| \leq \int_0^s g(\nu(t))\, dt
\end{align*}
for every curve $\nu \in \WR(\X)$ with $\nu(0)=\eta$, then
$$\breve{r}_F(\eta) \leq g(\eta).$$
\end{theorem}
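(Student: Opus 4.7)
My plan is to split the proof into two parts matching the theorem's structure.

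For the first inequality, I would apply Lemma \ref{fundth} to the one-variable functions $\phi(u):=F(\nu(u))$ and $\psi(u):=\breve{r}_F(\nu(u))$ on $[0,b_\nu]$. The derivative-type bound $\limsup_{u'\to u}|\phi(u')-\phi(u)|/|u'-u|\leq\psi(u)$ is the inequality already noted in the text preceding the theorem (apply $r_F^{\vae}$ to the forward and backward shifts of $\nu$ at parameter $u$, then let $\vae\to 0$), combined with $r_F\leq\breve{r}_F$. The required upper semicontinuity of $\psi$ as a one-variable function on $[0,b_\nu]$ reduces to the general fact that for every $h$ the regularization $\breve{h}$ is upper semicontinuous along curves in $\WR(\X)$; I would prove this by a concatenation trick modeled on the proof of Lemma \ref{maxalc1}, namely, given $u_n\to u$ with $\breve{h}(\nu(u_n))\to\ell$, pick short curves out of each $\nu(u_n)$ that nearly realize $\breve{h}(\nu(u_n))$ and prepend the reversed pieces of $\nu|_{[u,u_n]}$ to obtain witness curves starting at $\nu(u)$, showing $\breve{h}(\nu(u))\geq\ell$. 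A standard truncation covers the degenerate case in which $\psi=+\infty$ on a set of positive measure, where the right-hand side is already infinite.

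For the minimality part, my first move is to extract a uniform bound on $g$ near $\eta$ from condition (i). Applying Lemma \ref{maxalc1} to $g$ at $\eta$ produces a curve $\nu_g$ from $\eta$ with $\breve{g}(\eta)=\limsup_{s\to 0}g(\nu_g(s))$, and condition (i) forces $\breve{g}(\eta)\leq g(\eta)$. Unwinding the definition of $\breve{g}$ this translates to: for every $\vae>0$ there exists $\delta>0$ such that $g(\gamma(r))\leq g(\eta)+\vae$ for every $\gamma\in\WR(\X)$ with $\gamma(0)=\eta$ and every $r\in[0,\delta)$. Combined with (ii) this yields, for every such $\nu$ and every $s\in(0,\delta)$, the estimate $|F(\nu(s))-F(\eta)|/s\leq (1/s)\int_0^s g(\nu(u))\,du\leq g(\eta)+\vae$, so $r_F^{\delta}(\eta)\leq g(\eta)+\vae$, and letting $\vae\to 0$ one obtains $r_F(\eta)\leq g(\eta)$.

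Promoting this bound from $r_F(\eta)$ to $\breve{r}_F(\eta)$ is the heart of the argument. I would apply Lemma \ref{maxalc1} now to $r_F$ itself to produce a curve $\mu\in\WR(\X)$ with $\mu(0)=\eta$ and $\breve{r}_F(\eta)=\limsup_{s\to 0}r_F(\mu(s))$; for each $s$, Lemma \ref{maxalc} supplies a curve $\beta_s$ from $\mu(s)$ with $r_F(\mu(s))=\limsup_{t\to 0}|F(\beta_s(t))-F(\mu(s))|/t$, and the concatenation $\tilde{\nu}_s:=\mu\cdot\beta_s$ is a curve in $\WR(\X)$ starting at $\eta$, to which (ii) applies. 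The hard part will be the quantitative estimate: the naive triangle bound reads $|F(\beta_s(t))-F(\mu(s))|/t\leq(2s/t)(g(\eta)+\vae)+(g(\eta)+\vae)$, whose first term diverges for $t\ll s$. Closing this gap should use the specific structure of the maximizing curve produced by Lemma \ref{maxalc1}, which returns to $\eta$ along a sequence of parameters accumulating at $0$, so that one may select $s$-values for which $\mu(s)$ has only recently departed from $\eta$ along a short sub-curve, making the effective path length from $\eta$ to $\mu(s)$ much smaller than $s$ and balancing the $s/t$ ratio in the concatenation bound.
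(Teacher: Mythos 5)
Your handling of the first inequality is sound and in fact more careful than the paper's own: the paper invokes Lemma \ref{fundth} with $g(t)=\breve{r}_F(\nu(t))$ without checking the upper semicontinuity hypothesis of that lemma, and your concatenation argument (modeled on Lemma \ref{maxalc1}) verifying that $\breve{h}$ is upper semicontinuous along curves is exactly the missing step. Your derivation of $r_F(\eta)\leq g(\eta)$ is also correct, just a shade more roundabout than the paper's: the paper applies Lemma \ref{maxalc} to $F$ at $\eta$ to obtain a single curve $\nu$ realizing $r_F(\eta)$ as a $\limsup$, then uses (ii) followed by (i) on that one curve, avoiding the uniform version of (i) that you derive.

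The genuine gap is in promoting $r_F(\eta)\leq g(\eta)$ to $\breve{r}_F(\eta)\leq g(\eta)$; your diagnosis of the difficulty is right, but the proposed fix does not close it. In the curve $\mu$ produced by Lemma \ref{maxalc1} for $r_F$ at $\eta$, the parameters $r_n$ at which $r_F(\mu(r_n))$ nearly realizes $\breve{r}_F(\eta)$ satisfy $\mu(r_n)=\nu_n(\delta_n)$ with $r_n=\delta_n+2\sum_{j>n}\delta_j$; the tail $2\sum_{j>n}\delta_j$ is negligible compared to $\delta_n$ by condition (4) of that construction, so the effective departure length from $\eta$ to $\mu(r_n)$ is $\delta_n\approx r_n$, not $o(r_n)$. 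The parameters at which $\mu$ has only just left $\eta$ are exactly those near the returns to $\eta$, where $r_F(\mu(\cdot))$ is close to $r_F(\eta)$ and does not drive the $\limsup$; so the $s/t$ ratio in your concatenation estimate is not improved where it matters. What actually closes the step --- and what the paper's terse ``by an application of Lemma \ref{maxalc1}'' must mean --- is reading the hypotheses (i) and (ii) as holding at every base point, as in the Remark following the theorem and in Proposition \ref{fundprop} where the result is applied: then $r_F\leq g$ pointwise on $\X$, and Lemma \ref{maxalc1} gives at once $\breve{r}_F(\eta)=\limsup_{s\to 0}r_F(\nu(s))\leq\limsup_{s\to 0}g(\nu(s))\leq g(\eta)$ using (i). With the hypotheses anchored only at the single point $\eta$, as the theorem is literally phrased, neither your argument nor the paper's completes this step, and I do not see how to control $r_F$ at nearby points $\gamma(r)$ from information about curves issuing from $\eta$ alone.
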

\begin{proof}
The first part is more or less a direct consequence of Lemma \ref{fundth}, if we define $f(s)=F(\nu(s))$ and put $a= b_{\nu}$.
By assumption we have
$$\limsup_{s \rightarrow t} \left| \frac{f(s)-f(t)}{s-t}\right| = 
\limsup_{s \rightarrow t} \left| \frac{F(\nu(s))-F(\nu(t))}{s-t}\right| \leq r_F(\nu(t)),$$
and since $r_F(\nu(t)) \leq \breve{r}_F(\nu(t))$ the first part is proved.

For the second part we apply Lemma \ref{maxalc} to get that there is a curve $\nu \in \WR(\X)$ such that $\nu(0)=\eta$ and
$$r_F(\eta) \leq \limsup_{s \rightarrow 0} \left|\frac{F(\nu(s))-F(\nu(0))}{s}\right|.$$
But since
$$|F(\nu(s))-F(\nu(0))| \leq \int_0^s g(\nu(t))\, dt$$
we also have 
$$\limsup_{s \rightarrow 0} \left|\frac{F(\nu(s))-F(\nu(0))}{s}\right| \leq \limsup_{s \rightarrow 0} \frac{1}{s} \int_0^s g(\nu(t))\,dt \leq g(\nu(0)) =g(\eta).$$
Hence 
$$r_F(\eta) \leq g(\eta),$$
and, by an application of Lemma \ref{maxalc1}, the proof is done.
\end{proof}
\begin{remark}
Note that, according to the proof above, in case $g : \X \rightarrow [0,\infty)$ is upper semicontinuous along curves, and
$$\limsup_{s \rightarrow 0} \left| \frac{F(\eta(s))-F(\eta(0))}{s}\right| \leq g(\eta(0))$$
holds for every $\eta \in \WR(\X)$, then $\breve{r}_F \leq g$.
\end{remark}
\begin{theorem}\label{ugbasic}
Suppose $F,G : \X \rightarrow \R$.  
\begin{itemize}
\item[(1)] If $\vae \geq 0$ then $r_{aF}^{\vae} = |a|r_F^{\vae}  \textrm{ for all } a \in  \R,$
\item[(2)] If $\vae \geq 0$ then $r_{F+G}^{\vae} \leq r_F^{\vae} + r_G^{\vae},$
\item[(3)] If $\vae >0$ then $r_{FG}^{\vae} \leq |F|r_G^{\vae} + |G|r_F^{\vae} + r_F^{\vae}r_G^{\vae}\vae,$
\item[(4)] $r_{FG} \leq |F|r_G + |G|r_F$,
\item[(5)] If $\eta \in \X$ and $r_F(\eta) < \infty$ then ${r}_{|F|}(\eta)= {r}_F(\eta)$.
\end{itemize}
\end{theorem}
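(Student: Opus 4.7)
Parts (1) and (2) are immediate from the definition: for any $\eta \in \X$, $\nu \in \WR(\X)$ with $\nu(0) = \eta$, and $0 < s < \vae \wedge b_\nu$, one has $|aF(\nu(s)) - aF(\eta)|/s = |a|\cdot|F(\nu(s))-F(\eta)|/s$ and $|(F+G)(\nu(s))-(F+G)(\eta)|/s \leq |F(\nu(s))-F(\eta)|/s + |G(\nu(s))-G(\eta)|/s$ by the ordinary triangle inequality; taking suprema gives (1) and (2).

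For (3), the key is the algebraic identity
\begin{align*}
F(\nu(s))G(\nu(s)) - F(\eta)G(\eta) &= (F(\nu(s))-F(\eta))(G(\nu(s))-G(\eta))\\
&\quad + F(\eta)(G(\nu(s))-G(\eta)) + G(\eta)(F(\nu(s))-F(\eta)),
\end{align*}
which I would verify by direct expansion. Dividing by $s$, bounding the last two terms by $|F(\eta)| r_G^{\vae}(\eta)$ and $|G(\eta)| r_F^{\vae}(\eta)$, and estimating the first term by writing it as $\frac{|F(\nu(s))-F(\eta)|}{s}\cdot |G(\nu(s))-G(\eta)|$ where the first factor is $\leq r_F^{\vae}(\eta)$ and the second factor is $\leq s\,r_G^{\vae}(\eta) < \vae\,r_G^{\vae}(\eta)$, yields the claimed pointwise bound. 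Taking the supremum over $\nu$ and $s$ gives (3).

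For (4), I would let $\vae \to 0^+$ in (3). When $r_F(\eta), r_G(\eta) < \infty$ the term $r_F^{\vae}(\eta) r_G^{\vae}(\eta)\vae$ tends to $0$ because $r_F^{\vae}$ and $r_G^{\vae}$ are monotone in $\vae$ and hence bounded near $\vae = 0$, while $|F(\eta)|r_G^{\vae}(\eta) \to |F(\eta)|r_G(\eta)$ and similarly for the other term. The main obstacle is to handle the case in which $r_F(\eta)$ or $r_G(\eta)$ is infinite; I would treat this by noting that if, say, $r_F(\eta) = \infty$ and $G(\eta) \ne 0$, then the RHS is already $\infty$ and there is nothing to prove, and the remaining boundary case ($r_F(\eta) = \infty$ with $G(\eta) = 0$) can be dealt with by applying (3) directly and checking that if in addition $r_G(\eta) = 0$ then $|G(\nu(s))| \leq s\,r_G^{\vae}(\eta)$ forces $|F(\nu(s))G(\nu(s))|/s \to 0$ along any curve, so $r_{FG}(\eta) = 0$.

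For (5), the inequality $r_{|F|}^{\vae}(\eta) \leq r_F^{\vae}(\eta)$ (hence $r_{|F|}(\eta) \leq r_F(\eta)$) is immediate from $\bigl||F(\nu(s))|-|F(\eta)|\bigr| \leq |F(\nu(s))-F(\eta)|$. For the reverse inequality I would use Remark \ref{contcrem}: since $r_F(\eta) < \infty$, the function $F$ is continuous at $\eta$ along every $\nu \in \WR(\X)$ with $\nu(0) = \eta$. If $F(\eta) = 0$ then $\bigl||F(\nu(s))|-|F(\eta)|\bigr| = |F(\nu(s))-F(\eta)|$ for every $s$, so the two difference quotients agree identically. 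If $F(\eta) \ne 0$, by continuity along curves there is $\vae_0 > 0$ (possibly depending on $\nu$) such that $F(\nu(s))$ has the same sign as $F(\eta)$ for all $s \in [0,\vae_0)$, and again the two difference quotients coincide on that interval, so they have the same $\sup$ as $\vae \to 0^+$. Taking the limit $\vae \to 0^+$ gives $r_{|F|}(\eta) \geq r_F(\eta)$, completing the proof.
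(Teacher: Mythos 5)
Your treatment of (1)--(3) is essentially identical to the paper's: (1) and (2) from the triangle inequality for the difference quotients, and (3) from the same three-term algebraic decomposition of $FG(\nu(s))-FG(\eta)$. For (4) the core step (letting $\vae\to 0^+$ in (3)) matches the paper, which simply states ``(4) follows from (3) by taking the limit.'' Your extra case analysis for $r_F(\eta)=\infty$ goes beyond the paper, but the subcase you actually argue ($G(\eta)=0$, $r_G(\eta)=0$) does not work as written: from $|G(\nu(s))|\le s\,r_G^\vae(\eta)$ you get $|F(\nu(s))G(\nu(s))|/s \le |F(\nu(s))|\,r_G^\vae(\eta)$, and with $r_F(\eta)=\infty$ there is no control on $|F(\nu(s))|$ near $s=0$, so the product need not tend to $0$. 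You also do not address the subcase $G(\eta)=0$, $r_G(\eta)>0$. If one wants to be rigorous about infinite upper gradients here, the cleanest reading of (4) is under the convention $0\cdot\infty=\infty$ on the right-hand side; otherwise one should just state (4) for $\eta$ with $r_F(\eta), r_G(\eta)<\infty$, which is all that is ever used.

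For (5) you take a genuinely different route. The paper invokes Lemma \ref{maxalc} to produce a single curve $\nu$ along which $r_F(\eta)$ is realized as a limsup, then observes that along that one curve the $F$- and $|F|$-difference quotients agree for small $s$, giving $r_F(\eta)\le r_{|F|}(\eta)$; the easy reverse inequality (which you prove explicitly and the paper leaves implicit) closes the argument. Your version avoids Lemma \ref{maxalc} by arguing curve-by-curve, but as written it has a gap: you allow $\vae_0$ to depend on $\nu$, and then the suprema defining $r_F^\vae(\eta)$ and $r_{|F|}^\vae(\eta)$ range over \emph{all} curves, so for any fixed $\vae>0$ there may be curves $\nu$ with $\vae_0(\nu)<\vae$ on which the quotients disagree. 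The fix is to note that the $\vae_0$ can in fact be made uniform in $\nu$: by hypothesis there is $\vae_1>0$ with $r_F^{\vae_1}(\eta)<\infty$, so $|F(\nu(s))-F(\eta)|\le r_F^{\vae_1}(\eta)\,s$ for every $\nu$ and every $s<\vae_1\wedge b_\nu$, and hence $F(\nu(s))$ keeps the sign of $F(\eta)\ne 0$ once $s<\vae_0:=\vae_1\wedge\bigl(|F(\eta)|/r_F^{\vae_1}(\eta)\bigr)$, uniformly over all $\nu$. With this uniform $\vae_0$ your argument is correct and is arguably more elementary than the paper's appeal to Lemma \ref{maxalc}.
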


\begin{proof} Let $\eta \in \WR(\X)$.\newline
(1) If $a=0$ this is self-evident. Otherwise it follows for $\vae>0$ since 
\begin{align*}
&|aF(\eta(s))-aF(\eta(0))|\leq |a|ks \Leftrightarrow |a\|F(\eta(s))-F(\eta(0))| \leq |a|ks\\
& \Leftrightarrow |F(\eta(s))-F(\eta(0))| \leq ks.
\end{align*}
(2) follows for $\vae>0$ since if $|F(\eta(s))-F(\eta(0))| \leq k_1s$ and
$|G(\eta(s))-G(\eta(0))| \leq k_2s$ then
\begin{align*}
&|F(\eta(s))+G(\eta(s))-F(\eta(0))-G(\eta(0))| \\
&\leq |F(\eta(s))-F(\eta(0))|+|G(\eta(s))-G(\eta(0))| \leq (k_1+k_2)s.
\end{align*}
That (1) and (2) also holds for the value $\vae =0$ follows directly by just taking limits. 
(3) Suppose $s < \vae \wedge b_{\eta}$. Then
\begin{align*}
&|F(\eta(0))G(\eta(0))-F(\eta(s))G(\eta(s))| \\ &= |F(\eta(0))(G(\eta(0))-G(\eta(s))) + (G(\eta(s))-G(\eta(0)))(F(\eta(0))-F(\eta(s)))\\
&+ G(\eta(0))(F(\eta(0))-F(\eta(s)))| \\
& \leq|F(\eta(0))\|G(\eta(0))-G(\eta(s))|+|G(\eta(0))\|F(\eta(0))-F(\eta(s))|\\
& + |G(\eta(s))-G(\eta(0))\|F(\eta(s))-F(\eta(0))| \\
& \leq \left(|F(\eta(0))|r_G^{\vae}(\eta(0)) + |G(\eta(0))|r_F^{\vae}(\eta(0)) + r_F^{\vae}(\eta(0))r_G^{\vae}(\eta(0))s\right)s.
\end{align*}
(4) follows from (3) by taking the limit $\vae \rightarrow 0$.\smallskip\newline
To prove (5) we apply Lemma \ref{maxalc} to first get that there is a curve $\nu \in \WR(\X)$ such that 
$\nu(0)=\eta$ and 
$$r_F(\eta)=\limsup_{s \rightarrow 0} \left| \frac{F(\nu(s))-F(\eta)}{s}\right|.$$
Since the assumption that $r_F(\eta)<\infty$ implies that $F(\nu(s))$ is continuous at $s=0$, for $s$ close to $0$ we always have $\left| |F(\nu(s))|-|F(\eta)|\right|=\left|F(\nu(s))-F(\eta)\right|$. Therefore we see that 
$$r_F(\eta)=\limsup_{s \rightarrow 0} \left| \frac{F(\nu(s))-F(\eta)}{s}\right| = \limsup_{s \rightarrow 0} \left| \frac{|F(\nu(s))|-|F(\eta)|}{s}\right| \leq r_{|F|}(\eta).$$
Reversing the roles of $F$ and $|F|$ above gives the opposite inequality.
\end{proof}
The corresponding result holds more or less immediately by definition also for the usc regularized gradients:
\begin{theorem}\label{ugregbasic}
Suppose $F,G : \X \rightarrow \R$.  
\begin{itemize}
\item[(1)] If $\vae \geq 0$ then $\breve{r}_{aF}^{\vae} = |a|\breve{r}_F^{\vae} \textrm{ for all } a \in  \R,$
\item[(2)] If $\vae \geq 0$ then $\breve{r}_{F+G}^{\vae} \leq \breve{r}_F^{\vae} + \breve{r}_G^{\vae},$
\item[(3)] If $\vae >0$ then $\breve{r}_{FG}^{\vae} \leq \breve{|F|}\breve{r}_G^{\vae} + \breve{|G|}\breve{r}_F^{\vae} + \breve{r}_F^{\vae}\breve{r}_G^{\vae}\vae,$
\item[(4)] $\breve{r}_{FG} \leq \breve{|F|}\breve{r}_G + \breve{|G|}\breve{r}_F$,
\item[(5)] If $\eta \in \X$ and $r_F(\eta) < \infty$ then $\breve{r}_{|F|}(\eta)=  \breve{r}_F(\eta)$.
\end{itemize}
\end{theorem}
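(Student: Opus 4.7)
The plan is to derive each item directly from the corresponding part of Theorem \ref{ugbasic} by regularizing both sides and invoking the formal properties of the operator $f\mapsto\breve{f}$. From the definition
$$\breve{f}(y)=\lim_{s\to 0^+}\sup\{f(\gamma(r)):\gamma\in\WR(\X),\ \gamma(0)=y,\ 0\le r<s\}$$
one reads off monotonicity, positive homogeneity ($\breve{cf}=c\breve{f}$ for $c\ge 0$), subadditivity ($\breve{f+g}\le\breve{f}+\breve{g}$), and, for non-negative $f,g$, the submultiplicative estimate $\breve{fg}\le\breve{f}\,\breve{g}$ (obtained by bounding $\sup_{\gamma,r}f(\gamma(r))g(\gamma(r))\le(\sup_{\gamma,r}f(\gamma(r)))(\sup_{\gamma,r}g(\gamma(r)))$ for each $s$ and passing to the limit).

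With these in hand, item (1) reduces to $r_{aF}^{\vae}=|a|r_F^{\vae}$ (Theorem \ref{ugbasic}(1)) together with positive homogeneity, and (2) to $r_{F+G}^{\vae}\le r_F^{\vae}+r_G^{\vae}$ together with monotonicity and subadditivity. For (3) I would apply subadditivity once to the three-term bound of Theorem \ref{ugbasic}(3) and then submultiplicativity to each of $|F|r_G^{\vae}$, $|G|r_F^{\vae}$, $r_F^{\vae}r_G^{\vae}$ (all non-negative on $\X$), while (4) is the analogous regularization of $r_{FG}\le|F|r_G+|G|r_F$.

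Item (5) is the real obstacle. The inequality $\breve{r}_{|F|}(\eta)\le\breve{r}_F(\eta)$ is immediate from $r_{|F|}\le r_F$ (a consequence of $||a|-|b||\le|a-b|$) combined with monotonicity. For the reverse I would exploit the hypothesis to fix $\vae_0>0$ with $K:=r_F^{\vae_0}(\eta)<\infty$, so that by Remark \ref{contcrem}, $|F(\gamma(u))-F(\eta)|\le Ku$ for every $\gamma\in\WR(\X)$ starting at $\eta$ and every $u<\vae_0$. When $F(\eta)\ne 0$, shrinking $\vae_0$ so that $K\vae_0<|F(\eta)|/2$ forces $F$ to keep the sign of $F(\eta)$ on every point $\gamma(r)$ reachable from $\eta$ by a curve of length below $\vae_0$. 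Concatenating such a $\gamma$ with any $\mu\in\WR(\X)$ starting at $\nu_0=\gamma(r)$ then shows that $F(\mu(t))$ retains that same sign for $t<\vae_0-r$, so $|F(\mu(t))-F(\nu_0)|=||F(\mu(t))|-|F(\nu_0)||$, which forces $r_F(\nu_0)=r_{|F|}(\nu_0)$ at every such $\nu_0$ and therefore $\breve{r}_F(\eta)=\breve{r}_{|F|}(\eta)$ after taking suprema and letting $r\to 0$.

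The case $F(\eta)=0$ is what I expect to be most delicate, since the sign argument collapses. Here I would combine the bound $|F(\nu_0)|\le Kr$ with the elementary estimate $||F(\mu(t))|-|F(\nu_0)||\ge|F(\mu(t))-F(\nu_0)|-2|F(\nu_0)|$ to show that any curve $\mu$ making $|F(\mu(t))-F(\nu_0)|/t$ large also makes $||F(\mu(t))|-|F(\nu_0)||/t$ large, provided $r$ is chosen small compared to $t$. Combined with the extremal curves supplied by Lemma \ref{maxalc1} and the freedom to vary $r$ independently of $t$ in the sup defining $\breve{r}_{|F|}(\eta)$, this should transfer the divergence of $r_F$ at points near $\eta$ into the corresponding divergence of $r_{|F|}$, closing the argument; this sign-and-vanishing bookkeeping near a zero of $F$ is the single delicate step in the whole theorem.
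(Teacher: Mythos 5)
Your treatment of items (1)--(4) is correct and is exactly the argument the paper is alluding to: one reads off monotonicity, positive homogeneity, subadditivity and submultiplicativity (on non-negative functions) of the operator $f\mapsto\breve{f}$ directly from its definition as $\lim_{s\to 0}\sup_{\gamma,\,r<s}f(\gamma(r))$, and then regularizes both sides of the corresponding inequality in Theorem~\ref{ugbasic}. The paper gives no proof of these items (it only says ``more or less immediately by definition''), so your filling-in is the natural one. Your handling of item (5) in the case $F(\eta)\neq 0$ is also sound: for $r<\vae_0$ and $t<\vae_0-r$, concatenation plus the Lipschitz bound $|F(\cdot)-F(\eta)|\le K\cdot(\textrm{length})$ forces $F$ to keep the sign of $F(\eta)$ on the entire reachable neighbourhood, so $|F(\mu(t))-F(\nu_0)|$ and $\bigl||F(\mu(t))|-|F(\nu_0)|\bigr|$ agree \emph{exactly} there, giving $r_F^{\vae_0-r}(\gamma(r))=r_{|F|}^{\vae_0-r}(\gamma(r))$ for every $\gamma,r<\vae_0$, and hence equality of the regularizations --- whether or not $\breve{r}_F(\eta)$ is finite.

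The gap is in your sketch for the case $F(\eta)=0$. Here it is worth splitting further. When $\breve{r}_F(\eta)<\infty$ you do not actually need the $2|F(\nu_0)|/t$ bookkeeping: since $g(s):=\sup_{\gamma,\,r<s}r_F(\gamma(r))$ is non-decreasing with finite limit, $g(s_0)<\infty$ for some $s_0$, so $r_F(\gamma(r))<\infty$ at every nearby point, Theorem~\ref{ugbasic}(5) applies pointwise at each $\gamma(r)$, and $\breve{r}_{|F|}(\eta)=\breve{r}_F(\eta)$ follows immediately. The genuinely problematic case is $F(\eta)=0$ with $\breve{r}_F(\eta)=\infty$, and this is where your estimate does not close. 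You want the error term $2|F(\nu_0)|/t\le 2Kr/t$ to be small, which requires $r$ small relative to $t$; but $r_{|F|}(\nu_0)=\lim_{\vae\to 0}r_{|F|}^{\vae}(\nu_0)$ is a limit in which $t\to 0$ while $r=d(\nu_0,\eta)$ is \emph{fixed}, so eventually $t\ll r$ and $2Kr/t$ dominates. The ``freedom to vary $r$ independently of $t$'' you invoke is not available: $r$ parametrizes the outer sup defining $\breve{r}_{|F|}(\eta)$, while $t$ lives inside the inner \emph{limit} defining $r_{|F|}(\gamma(r))$, and the limit cannot be traded for a sup. Concretely, nothing in the hypotheses rules out a sequence of points $\nu_0^{(n)}=\gamma_n(r_n)$, $r_n\to 0$, with $F(\nu_0^{(n)})\approx Kr_n>0$ and curves along which $F$ jumps to $\approx -Kr_n$ in time $t\ll r_n$: this makes $r_F(\nu_0^{(n)})=\infty$ while leaving $r_{|F|}(\nu_0^{(n)})$ bounded, and your argument supplies no other nearby points at which $r_{|F|}$ blows up. So as written the sketch does not prove $\breve{r}_{|F|}(\eta)=\breve{r}_F(\eta)$ in this sub-case, and a genuinely new idea (or a strengthened hypothesis such as $\breve{r}_F(\eta)<\infty$, which is what actually holds in the paper's applications) is needed to close it.
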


An important property of these upper gradients, which in particular will be used to prove completeness of our Sobolev-type spaces below, is as follows:

\begin{theorem}\label{countadd}
If $F=\sum_{n=1}^{\infty} F_n$ and $\vae > 0$ then $r_F^{\vae} \leq \sum_{n=1}^{\infty} r_{F_n}^{\vae}$.

Furthermore $\breve{r}_F \leq \breve{g}$, where $g= \sum_{n=1}^{\infty} \breve{r}_{F_n}$.
\end{theorem}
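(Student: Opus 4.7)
The first inequality is a direct consequence of the triangle inequality applied term by term to the (pointwise convergent) series $F = \sum_n F_n$. Fix $\eta \in \X$ and any $\nu \in \WR(\X)$ with $\nu(0) = \eta$ and $0 < s < \vae \wedge b_\nu$. Then
$$\frac{|F(\nu(s)) - F(\eta)|}{s} \leq \sum_{n=1}^{\infty} \frac{|F_n(\nu(s)) - F_n(\eta)|}{s} \leq \sum_{n=1}^{\infty} r_{F_n}^{\vae}(\eta),$$
and taking the supremum over $\nu$ and $s$ on the left gives $r_F^{\vae}(\eta) \leq \sum_{n=1}^{\infty} r_{F_n}^{\vae}(\eta)$.

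For the second statement my plan is to invoke Theorem \ref{fundth2}. Applying its first part to each $F_n$, then summing and interchanging series with integral via Tonelli (valid since $\breve{r}_{F_n} \geq 0$), we obtain for any $\nu \in \WR(\X)$ and $s \in [0, b_\nu]$
$$|F(\nu(s)) - F(\nu(0))| \leq \sum_{n=1}^{\infty} \int_0^s \breve{r}_{F_n}(\nu(t))\,dt = \int_0^s g(\nu(t))\,dt \leq \int_0^s \breve{g}(\nu(t))\,dt.$$
To apply the second part of Theorem \ref{fundth2} with the function $\breve{g}$ in the role of $g$, the remaining ingredient is upper semicontinuity of $\breve{g}$ along curves. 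This I would establish as a general property of the usc-regularization $\breve{(\cdot)}$: given $\nu \in \WR(\X)$ and $t \in [0, b_\nu]$, any short curve $\gamma'$ starting at $\nu(t+u)$ can be prepended with $\nu|_{[t, t+u]}$ to yield a curve in $\WR(\X)$ starting at $\nu(t)$, so the sup used to define $\breve{g}(\nu(t+u))$ is dominated by a sup over a slightly enlarged family of curves starting at $\nu(t)$; letting $u \to 0^+$, with a symmetric time-reversal argument for $u \to 0^-$, gives $\limsup_{u \to 0} \breve{g}(\nu(t+u)) \leq \breve{g}(\nu(t))$.

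With this, Theorem \ref{fundth2} yields $\breve{r}_F(\eta) \leq \breve{g}(\eta)$ at every $\eta$ with $\breve{g}(\eta) < \infty$; where $\breve{g}(\eta) = \infty$ the inequality is trivial. I expect the main obstacle to be the usc-along-curves property of the regularization $\breve{(\cdot)}$ and the concatenation bookkeeping it requires, in particular handling the one-sided parametrization of curves in $\WR(\X)$; once that is done, both parts of the theorem follow routinely from the already-established results.
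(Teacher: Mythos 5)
Your proposal is correct and follows the same route as the paper: the first inequality is the termwise triangle inequality, and the second is the chain
\[
|F(\nu(s)) - F(\nu(0))| \le \sum_n |F_n(\nu(s))-F_n(\nu(0))| \le \int_0^s g(\nu(t))\,dt \le \int_0^s \breve g(\nu(t))\,dt
\]
followed by the characterisation of $\breve r_F$ from Theorem~\ref{fundth2}. In fact you are a bit more careful than the paper, which stops at the displayed chain of inequalities and leaves to the reader the point you make explicit: that to invoke the second half of Theorem~\ref{fundth2} with $\breve g$ in place of $g$ one needs $\limsup_{s\to 0^+}\breve g(\nu(s))\le\breve g(\nu(0))$ for curves $\nu$ starting at $\eta$, and that this follows from the definition of the usc regularisation by the concatenation argument you sketch (only the one-sided, ``at the origin of the curve'' version is needed, so the time-reversal remark is harmless but not required). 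Your handling of the case $\breve g(\eta)=\infty$ is also the right way to reconcile the statement with the fact that Theorem~\ref{fundth2} is formulated for finite-valued $g$.
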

\begin{proof}
For $\vae>0$, $\eta \in \WR(\X)$ and $s < \vae \wedge b_{\eta}$ we have
 \begin{align*}
&|F(\eta(s)) - F(\eta(0))| = \left| \sum_{n=1}^{\infty} F_n(\eta(s)) - \sum_{n=1}^{\infty} F_n (\eta(0))\right| \\
&\leq \sum_{n=1}^{\infty} |F_n(\eta(s)) - F_n(\eta(0))| \leq \left(\sum_{n=1}^{\infty} r_{F_n}^{\vae}(\eta(0)) \right)s.
\end{align*} 
As for the second part we note that 
\begin{align*}
&|F(\eta(s)) - F(\eta(0))| \leq \sum_{n=1}^{\infty} |F_n(\eta(s))-F_n(\eta(0))| \leq \sum_{n=1}^{\infty} \int_0^s \breve{r}_{F_n}(\eta(t)) \, dt\\
& = \int_0^s \left( \sum_{n=1}^{\infty} \breve{r}_{F_n}\right)(\eta(t)) \, dt = \int_0^s g(\eta(t)) \, dt \leq \int_0^s \breve{g}(\eta(t))\,  dt.
\end{align*}
\end{proof}
\begin{theorem}
Suppose $F : \X \rightarrow \R$, $\nu \in \X$, $r_F(\nu) < \infty$ and that $f$ is continuously differentiable in some neighborhood of $F(\nu)$. Then 
$$r_{f \circ F}(\nu)=f'(F(\nu))r_F(\nu) \textrm{ and } \breve{r}_{f \circ F}(\nu)=f'(F(\nu))\breve{r}_F(\nu).$$
\end{theorem}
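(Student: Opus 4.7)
The plan is to exploit the one-variable mean value theorem along curves. The key preparation is that since $r_F(\nu)<\infty$, Remark \ref{contcrem} tells us $F(\eta(s))\to F(\nu)$ as $s\to 0$ along every $\eta\in\WR(\X)$ with $\eta(0)=\nu$. Combined with the $C^1$ hypothesis on $f$ in a neighborhood of $F(\nu)$, this gives a uniform control: for every $\delta>0$ there exists $\vae_0>0$ such that for every $\eta\in\WR(\X)$ with $\eta(0)=\nu$ and every $s\in(0,\vae_0\wedge b_\eta)$, the point $F(\eta(s))$ lies in a neighborhood of $F(\nu)$ on which $|f'(\cdot)-f'(F(\nu))|<\delta$.

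First I would establish the upper bound on $r_{f\circ F}(\nu)$. For a curve $\eta$ as above with $s$ small, MVT supplies $\xi=\xi(\eta,s)$ between $F(\nu)$ and $F(\eta(s))$ with
$$\frac{|f(F(\eta(s)))-f(F(\nu))|}{s}=|f'(\xi)|\cdot\frac{|F(\eta(s))-F(\nu)|}{s}\leq\bigl(|f'(F(\nu))|+\delta\bigr)\,\frac{|F(\eta(s))-F(\nu)|}{s}.$$
Taking the supremum over all such $\eta$ and $s<\vae\wedge b_\eta$ yields $r_{f\circ F}^{\vae}(\nu)\leq(|f'(F(\nu))|+\delta)\,r_F^{\vae}(\nu)$; then letting $\vae\to 0$ (so we can force $\vae\leq \vae_0$) and $\delta\to 0$ gives $r_{f\circ F}(\nu)\leq |f'(F(\nu))|\,r_F(\nu)$.

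For the matching lower bound I would apply Lemma \ref{maxalc} to get $\eta_0\in\WR(\X)$ with $\eta_0(0)=\nu$ and $\limsup_{s\to 0}|F(\eta_0(s))-F(\nu)|/s=r_F(\nu)$. Along $\eta_0$ the same MVT identity with $\xi\to F(\nu)$ (since $F(\eta_0(s))\to F(\nu)$) gives
$$\limsup_{s\to 0}\frac{|f(F(\eta_0(s)))-f(F(\nu))|}{s}=|f'(F(\nu))|\cdot r_F(\nu),$$
and the left side is by definition at most $r_{f\circ F}(\nu)$, giving equality. For the $\breve r$ version I would note that the pointwise identity $r_{f\circ F}=|f'\circ F|\,r_F$ holds on a whole $\WR(\X)$-neighborhood of $\nu$ by the same argument applied at nearby points, so that taking the usc regularization along curves of both sides gives $\breve r_{f\circ F}(\nu)=|f'(F(\nu))|\,\breve r_F(\nu)$, using that the factor $|f'\circ F|$ is continuous along curves at $\nu$ with value $|f'(F(\nu))|$ (again by Remark \ref{contcrem}).

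The main obstacle is essentially bookkeeping: one needs the convergence $\xi(\eta,s)\to F(\nu)$ to be uniform over all competitor curves $\eta$ starting at $\nu$, which is exactly what the $r_F(\nu)<\infty$ hypothesis purchases through Remark \ref{contcrem}. A cosmetic point is the sign: since $r_F,r_{f\circ F}\geq 0$ by construction, the stated identity $r_{f\circ F}(\nu)=f'(F(\nu))r_F(\nu)$ is to be read with $|f'(F(\nu))|$ on the right (as the MVT argument naturally produces); when $f'(F(\nu))\geq 0$ the two expressions agree literally.
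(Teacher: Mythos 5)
Your proof of the pointwise identity is correct, and it takes a partly different route from the paper's. The upper bound $r_{f\circ F}(\nu)\leq |f'(F(\nu))|\,r_F(\nu)$ is obtained in both arguments from the same mean-value estimate along curves (the paper bounds $|f(x)-f(y)|$ by $\sup_{t\in[0,1]}|f'(tx+(1-t)y)|\,|x-y|$), and the uniformity you extract from $r_F^{\vae}(\nu)<\infty$ is exactly what keeps the intermediate points inside the $C^1$ neighbourhood. For the reverse inequality the paper argues differently: if $f'(F(\nu))=0$ the upper bound already forces equality, and if $f'(F(\nu))\neq 0$ it applies the upper bound to $f^{-1}\circ(f\circ F)$, using local invertibility of $f$ and $(f^{-1})'(f(F(\nu)))=1/f'(F(\nu))$. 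You instead invoke Lemma \ref{maxalc} to produce a near-maximizing curve and run the MVT along it; this avoids both the case distinction and the inverse function, and it is in the spirit of how the paper itself uses Lemma \ref{maxalc} elsewhere (Theorem \ref{fundth2}, Theorem \ref{ugbasic}(5)). Your remark that the right-hand side must be read as $|f'(F(\nu))|$ is also apt: the paper writes the signed derivative, but its own proof produces the absolute value, and $r_{f\circ F}\geq 0$ by definition.

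One soft spot concerns the regularized statement: you assert that the pointwise identity holds on a whole $\WR(\X)$-neighbourhood of $\nu$ ``by the same argument applied at nearby points.'' That is not available from the hypotheses, which give $r_F<\infty$ only at $\nu$; for a point $\gamma(r)$ with $\gamma(0)=\nu$ and $r$ small, one may have $r_F(\gamma(r))=\infty$, since concatenating a curve $\theta$ starting at $\gamma(r)$ with $\gamma$ only yields $|F(\theta(s))-F(\gamma(r))|\leq K(2r+s)$, not a bound of order $s$. What does survive at such nearby points is a two-sided MVT comparison $c_1|F(\theta(s))-F(\gamma(r))|\leq|f(F(\theta(s)))-f(F(\gamma(r)))|\leq c_2|F(\theta(s))-F(\gamma(r))|$ with $c_1,c_2\to|f'(F(\nu))|$ as $r,s\to 0$, valid precisely because the concatenation bound keeps all the relevant values of $F$ within $K(r+s)$ of $F(\nu)$, hence inside the $C^1$ neighbourhood; comparing $r_{f\circ F}$ with $r_F$ at those points through this two-sided bound and then taking the regularization gives $\breve{r}_{f\circ F}(\nu)=|f'(F(\nu))|\breve{r}_F(\nu)$ (with the usual caveat about $0\cdot\infty$ when $f'(F(\nu))=0$ and $\breve{r}_F(\nu)=\infty$). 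To be fair, the paper disposes of this part with ``follows immediately by definition,'' so your sketch is no thinner than the original, but the neighbourhood claim as you state it is the one step that would not survive scrutiny.
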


\begin{proof}
We know that $|f(x)-f(y)| \leq \sup_{t \in [0,1]}|f'(tx+(1-t)y)|\cdot|x-y|$ for any points $x,y \in \R$. Hence for any curve  $\eta$ such that $\eta(0)=\nu$ we have
$$|f(F(\eta(s)) - f(F(\eta(0))| \leq \sup_{t \in [0,1]} |f'(tF(\eta(s)) + (1-t)F(\eta(0)))|\cdot|F(\eta(s))-F(\eta(0))|.$$
But as $s \rightarrow 0$ this implies that $r_{f \circ F}(\eta(0)) \leq |f'(F(\eta(0))|r_F(\eta(0))$.

If $f'(F(\eta(0))=0$ this  must be an equality. Otherwise $f$ is invertible in some neighborhood of $F(\eta(0))$, and if we apply the formula to $f^{-1}\circ f \circ F$ we get
$$r_{f^{-1} \circ f \circ F} (\eta(0)) \leq (f^{-1})'(f(F(\eta(0)))r_{f \circ F}(\eta(0)),$$
and since $(f^{-1})'(f(F(\eta(0))) = (f'(F(\eta(0)))^{-1}$ the opposite inequality also follows.

The statement about the upper semicontinuous regularizations follows immediately by definition. 
\end{proof}
\begin{lemma}\label{lattice}
If $F,G : \X \rightarrow \R$ then 
\begin{itemize}
\item[(1)] $\breve{r}_{F \vee G} \leq \breve{r}_F \vee \breve{r}_G \leq \breve{r}_F + \breve{r}_G,$
\item[(2)]$\breve{r}_{F \wedge G} \leq \breve{r}_F \vee \breve{r}_G \leq \breve{r}_F + \breve{r}_G,.$
\end{itemize}
\end{lemma}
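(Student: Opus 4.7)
The plan is to derive the statement as a direct consequence of the elementary inequality $|a\vee b - c\vee d|\le |a-c|\vee|b-d|$ (and its $\wedge$-analogue) applied pointwise along curves, followed by the monotonicity and sublinearity of the usc regularization $\breve{\,\cdot\,}$.

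First I would observe that, for any $\nu\in\WR(\X)$ with $\nu(0)=\eta$ and any $s\in(0,b_\nu)$, the elementary lattice inequality gives
$$|(F\vee G)(\nu(s))-(F\vee G)(\eta)|\le |F(\nu(s))-F(\eta)|\vee|G(\nu(s))-G(\eta)|,$$
and analogously for $F\wedge G$. Dividing by $s$ and taking the supremum over all admissible pairs $(\nu,s)$ with $s<\vae\wedge b_\nu$ then yields the pointwise inequalities
$$r_{F\vee G}^{\vae}\le r_F^{\vae}\vee r_G^{\vae},\qquad r_{F\wedge G}^{\vae}\le r_F^{\vae}\vee r_G^{\vae}.$$
Letting $\vae\to 0$ produces $r_{F\vee G}\le r_F\vee r_G$ and $r_{F\wedge G}\le r_F\vee r_G$.

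Next I would record the auxiliary fact that the usc regularization along curves is monotone and commutes with $\vee$ from above: for any $h_1,h_2:\X\to[0,\infty]$ one has $\breve{h_1\vee h_2}\le \breve{h}_1\vee\breve{h}_2$. This is immediate from the definition, since for any curve $\gamma$ with $\gamma(0)=\eta$ and any $s>0$ the pointwise bound $(h_1\vee h_2)(\gamma(r))\le h_1(\gamma(r))\vee h_2(\gamma(r))$ gives
$$\sup\{(h_1\vee h_2)(\gamma(r)):0\le r<s,\ \gamma(0)=\eta\}\le S_1(s)\vee S_2(s),$$
where $S_i(s)$ is the same supremum applied to $h_i$. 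Each $S_i$ is monotone in $s$, so the limit as $s\to 0^+$ of a maximum of decreasing functions equals the maximum of the limits, giving $\breve{h_1\vee h_2}(\eta)\le\breve{h}_1(\eta)\vee\breve{h}_2(\eta)$.

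Combining the two steps, monotonicity of $\breve{\,\cdot\,}$ together with $r_{F\vee G}\le r_F\vee r_G$ yields
$$\breve{r}_{F\vee G}\le \breve{r_F\vee r_G}\le \breve{r}_F\vee\breve{r}_G,$$
which is the first inequality of (1); the second, $\breve{r}_F\vee\breve{r}_G\le\breve{r}_F+\breve{r}_G$, is trivial because both functions are non-negative. The argument for (2) is identical, starting from the $\wedge$-version of the elementary lattice inequality. I do not anticipate a real obstacle here; the only subtlety is that the passage from $r_{F\vee G}\le r_F\vee r_G$ to the regularized inequality requires the small auxiliary lemma about $\breve{h_1\vee h_2}$, but this is settled by the monotonicity of the inner suprema in $s$.
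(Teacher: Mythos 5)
Your proof is correct, and it takes a genuinely different (and cleaner) route than the paper's. The paper's argument is curve-by-curve: it first reduces to the case where $r_F(\eta(0))$ and $r_G(\eta(0))$ are both finite so that, by Remark~\ref{contcrem}, $F(\eta(s))$ and $G(\eta(s))$ are continuous at $s=0$; it then splits into the cases $F(\eta(0))\neq G(\eta(0))$ (where continuity forces $F\vee G$ to coincide with one of $F,G$ near $s=0$) and $F(\eta(0))=G(\eta(0))$; and finally it passes, via the remark after Theorem~\ref{fundth2}, from a curvewise $\limsup$ bound against $\breve r_F\vee\breve r_G$ (which is usc along curves) to a bound on $\breve r_{F\vee G}$. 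Your argument sidesteps all of this: the unconditional lattice inequality $|a\vee b-c\vee d|\le|a-c|\vee|b-d|$ (and its $\wedge$-analogue) gives $r^{\vae}_{F\vee G}\le r^{\vae}_F\vee r^{\vae}_G$ for every $\vae>0$ with no finiteness hypothesis and no case split, and your auxiliary observation $\breve{h_1\vee h_2}\le\breve h_1\vee\breve h_2$ is indeed correct --- the inner suprema $S_i(s)$ are non-increasing as $s\downarrow 0$, so $\lim_{s\to 0}\bigl(S_1(s)\vee S_2(s)\bigr)=\lim S_1\vee\lim S_2$, and monotonicity of $\breve{\,\cdot\,}$ closes the chain $\breve r_{F\vee G}\le\breve{r_F\vee r_G}\le\breve r_F\vee\breve r_G$. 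The trade-off is that you introduce the small extra lemma about $\breve{\,\cdot\,}$ and $\vee$, but in exchange you avoid the finiteness reduction, the continuity appeal, and the three-way case analysis, and you obtain the sharper intermediate statement $r_{F\vee G}\le r_F\vee r_G$ at the unregularized level, which the paper's proof does not produce.
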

\begin{remark}
Note that the second formula above has $\wedge$ in the left hand side, but $\vee$ on the right hand side. It is certainly not possible to replace $\vee$ with $\wedge$ here (for instance, if $F \leq G$ and $G$ is constant, then $\breve{r}_{F \wedge G} = \breve{r}_F$, but $\breve{r}_F \wedge \breve{r}_G=0$).
\end{remark}
\begin{proof}
Let $\eta \in \WR(\X)$. In case either of $r_F((\eta(0))$ or $r_G(\eta(0))$ is infinite the inequalities holds trivially, so we may assume that both of these are finite. According to Remark \ref{contcrem} we know that this implies that $F(\eta(s))$ and $G(\eta(s))$ as functions of $s$ are continuous at $0$.
Suppose $F(\eta(0)) > G(\eta(0))$, then $F(\eta(s)) > G(\eta(s))$ for $s$ close to $0$ as-well, and hence
\begin{align*}
&\limsup_{s \rightarrow 0} \left| \frac{F(\eta(s)) \vee G(\eta(s)) - F(\eta(0)) \vee G(\eta(0))}{s}\right| = \limsup_{s \rightarrow 0} \left| \frac{F(\eta(s)) - F(\eta(0))}{s}\right| \\
&\leq \breve{r}_F(\eta(0)) \leq \breve{r}_F(\eta(0)) \vee \breve{r}_G(\eta(0)).
\end{align*} 
A similar estimate holds also in case $G(\eta(0)) > F(\eta(0))$.

If on the other hand $F(\eta(0))=G(\eta(0))$ then it is easy to see that 
\begin{align*}
&\limsup_{s \rightarrow 0} \left| \frac{F(\eta(s)) \vee G(\eta(s)) - F(\eta(0)) \vee G(\eta(0))}{s}\right| \\
&\leq \limsup_{s \rightarrow 0} \left| \frac{F(\eta(s)) - F(\eta(0))}{s}\right| \vee \limsup_{s \rightarrow 0} \left| \frac{G(\eta(s)) - G(\eta(0))}{s}\right| \\
& \leq \breve{r}_F(\eta(0)) \vee \breve{r}_G(\eta(0)).
\end{align*}
The case of $F \wedge G$ is treated similarly.
\end{proof}
\subsection{Upper gradients of functions on $X$}
A problem with the upper gradients $r_F$ is that they are not functions on $X$ a-priori even if $F$ is of the form $F_f$ for some $f \in L^1_{\rm loc}(X)$. Our first objective is to prove that there is a natural function on $X$ which represents this gradient in case $F =F_f$ for some $f \in L^1_{\rm loc}(X)$.

Before we prove this, we start by proving the monotonicity of $r_{F_f}^{\vae}$ and $\breve{r}_{F_f}$.
\begin{lemma}\label{monlem}
Given $f \in L^1_{\rm loc}(X)$ and elements $\eta_1,\eta$ in $\X$ such that $\eta_1 \leq \eta$ then
\begin{itemize}
\item[(1)] $r^{\vae}_{F_f}(\eta_1) \leq r^{\vae}_{F_f}(\eta) \quad \textrm{ for all } \vae \in[0,\infty)$,
\item[(2)] $\breve{r}_{F_f}(\eta_1) \leq \breve{r}_{F_f}(\eta)$
\end{itemize} 
\end{lemma}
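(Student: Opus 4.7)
The plan is as follows. I will first deduce (2) from (1) by observing that $\breve{\cdot}$ is monotone: since the defining supremum in $\breve{r}_{F_f}(\eta)$ depends only on values of $r_{F_f}$ along curves starting from $\eta$, pointwise inequalities pass through, so (1) implies (2). The main work is (1). Set $\gamma := \eta - \eta_1 \in \M$ (noting $\gamma \leq \mu - \eta_1 \leq \mu$). The strategy is to show that for every curve $\nu_1 \in \WR(\X)$ with $\nu_1(0) = \eta_1$ and every $s \in (0, \vae \wedge b_{\nu_1})$, there is a curve $\nu \in \WR(\X)$ with $\nu(0) = \eta$ and some $s' \in (0, \vae \wedge b_\nu)$ for which the difference quotient $|F_f(\nu(s')) - F_f(\eta)|/s'$ dominates $|F_f(\nu_1(s)) - F_f(\eta_1)|/s$; taking the sup on both sides yields (1).

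The natural candidate is the stationary extension $\nu(s) := \nu_1(s) + \gamma$. By Theorem \ref{rectcont}, $\|\nu_1(s)\| = \|\eta_1\|$ is constant along $\nu_1$, so $\|\nu(s)\| = \|\eta_1\| + \|\gamma\| = \|\eta\|$, and linearity of the mean-value integral gives the key identity
$$F_f(\nu(s)) - F_f(\eta) = \frac{\|\eta_1\|}{\|\eta\|}\bigl(F_f(\nu_1(s)) - F_f(\eta_1)\bigr).$$
From this formula two difficulties are immediately visible: (i) the sum $\nu_1(s) + \gamma$ may violate the constraint $\leq \mu$ wherever $\nu_1$ has pushed mass into points where $\gamma$ already has positive density, so the candidate need not lie in $\WR(\X)$; and (ii) the slope is scaled down by the factor $\|\eta_1\|/\|\eta\| \leq 1$, which by itself does not deliver the desired inequality.

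To address (i), I would apply Corollary \ref{split3} to decompose $\nu_1$ into countably many subcurves whose values at the relevant times are each dominated by appropriate pieces of a partition of $\mu$ compatible with $\gamma$; with such a decomposition in hand the stationary extension by $\gamma$ respects the density bound $\leq \mu$, and additivity of the mean-value integral reassembles the slope. Addressing (ii) is the main obstacle: one must augment the companion curve so that the $\gamma$-portion is not stationary but instead moves via a separately chosen curve in $\WR(\M)$ starting at $\gamma$, constructed from a measure-preserving flow via Theorem \ref{Hmap2}, whose contribution to $F_f(\nu(s)) - F_f(\eta)$ cooperates in sign and magnitude with the contribution from $\nu_1$ rather than diluting it. Patching the two moving pieces into a single $1$-Lipschitz path in $(\M,d_\M)$ (checking the distance estimate by matching each piece to its own image under the chosen flow, as in the proof of Theorem \ref{Hmap2}) and verifying that the combined difference quotient meets or exceeds $|F_f(\nu_1(s))-F_f(\eta_1)|/s$ is the hardest step and the point where the density constraint, the mass conservation along curves, and the linearity of $F_f$ all need to be balanced with care.
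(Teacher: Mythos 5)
Your instinct to distrust the stationary extension is exactly right, and the scaling identity you derived,
\[
F_f(\nu(s)) - F_f(\eta) = \frac{\|\eta_1\|}{\|\eta\|}\bigl(F_f(\nu_1(s)) - F_f(\eta_1)\bigr),
\]
pinpoints the real obstruction. The paper's own proof takes exactly the stationary-extension route, using $\nu(s)=\tfrac12(\nu_1(s)+\eta_2)$ with $\eta_2=\eta-\eta_1$ (the halving disposes of your obstacle (i) immediately, more simply than a Corollary~\ref{split3} decomposition), but then at the decisive step asserts
\[
\bigl|(F_f(\nu_1(s)/2)-F_f(\nu_1(0)/2)) + (F_f(\eta_2/2)-F_f(\eta_2/2))\bigr| = \bigl|F_f((\nu_1(s)+\eta_2)/2)-F_f((\nu_1(0)+\eta_2)/2)\bigr|.
\]
This is false: by scale-invariance of $F_f$ the left side is $|F_f(\nu_1(s))-F_f(\nu_1(0))|$, while a direct computation shows the right side is $\tfrac{\|\eta_1\|}{\|\eta\|}\,|F_f(\nu_1(s))-F_f(\nu_1(0))|$. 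The step silently treats the mean-value $F_f$ as if it were the additive functional $G_{F_f}(\zeta)=\int f\,d\zeta$. Corrected, the argument yields only $\|\eta_1\|\,r^\vae_{F_f}(\eta_1)\le\|\eta\|\,r^\vae_{F_f}(\eta)$ --- precisely the mass-weighted inequality your identity gives.

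Your hoped-for repair (moving the $\eta_2$-portion along a flow so that it contributes to, rather than dilutes, the slope) cannot close the gap, because the statement as given is false. Take $X=\R$ with Lebesgue measure and $f\in C^2$ with $0\le f'\le1$, $f'\equiv1$ on $[0,1-\delta]$ and $f'\equiv0$ on $[1+\delta,2]$; set $\eta_1=\mu|_{[0,1]}$, $\eta=\mu|_{[0,2]}$. The translation curve $\nu_1(s)=\mu|_{[s,1+s]}$ lies in $\WR(\X)$ and gives $r_{F_f}(\eta_1)\ge f(1)-f(0)\ge1-\delta$. On the other hand, the pointwise estimate established in the second half of the proof of Lemma~\ref{gnabla} (which relies on Proposition~\ref{split2}, not on Lemma~\ref{monlem}) shows that for every $\nu\in\WR(\X)$ with $\nu(0)=\eta$ one has $|F_f(\nu(s))-F_f(\eta)|\le sF_{|\nabla f|}(\nu(s))+o(s)$, uniformly over such curves, whence $r_{F_f}(\eta)\le F_{|\nabla f|}(\eta)=\tfrac12\int_0^2|f'|\,dx\le\tfrac{1+\delta}{2}$. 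For small $\delta$ this contradicts conclusion~(1). What does survive, and is consistent with the eventual identification $r_{F_f}=F_{g_f}$ with $g_f\ge0$, is the mass-weighted monotonicity $G_{r_{F_f}}(\eta_1)\le G_{r_{F_f}}(\eta)$, i.e. $\|\eta_1\|\,r_{F_f}(\eta_1)\le\|\eta\|\,r_{F_f}(\eta)$ --- exactly what your correctly scaled identity proves; but this weaker conclusion is not sufficient for the superadditivity claim that Lemma~\ref{monlem} is invoked for in the theorem that follows.
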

\begin{proof}
Suppose $\nu \in \WR(\X)$ with $\nu(0)=\eta$, and let $t \in (0,1]$, then $t\nu \in \WR(\X)$ and we have
$$|F_f(t\nu(s)) -F_f(t(\nu(0))| = |F_f(\nu(s))-F_f(\nu(0))|.$$
Hence it follows that $r^{\vae}_{F_f}(t\eta)=r^{\vae}_{F_f}(\eta)$ for all $t \in (0,1]$, and  $\vae >0$. Passing to the limit gives also $r_{F_f}(\eta)=r_{F_f}(t\eta)$. It also follows immediately by definition that we have $\breve{r}_{F_f}(t\eta) = \breve{r}_{F_f}(\eta).$

Suppose now that $\nu_1 \in \WR(\X)$ with $\nu_1(0)=\eta_1$, and let $\eta_2=\eta-\eta_1$. It is straightforward to see that the curve $\nu(s)= \frac{1}{2}(\nu_1(s)+\eta_2)$ belongs to $\WR(\X)$,  
$\nu(0)= \eta/2$ and that it satisfies  $d_{\M}(\nu(s),\nu(0)) \leq s$ for each $s$ (note that we can not expect any improvement on this, since the distance $d_{\M}$ typically is controlled by the relation between the supports of the measures rather than the total masses).
Hence we get if $\vae >0$ and $s < \vae \wedge b_{\nu_1}$:
\begin{align*}
& |F_f(\nu_1(s))-F_f(\nu_1(0))| =|(F_f(\nu_1(s)/2) - F_f(\nu_1(0)/2) + (F_f(\eta_2/2) -F_f(\eta_2/2))| \\
&=|F_f((\nu_1(s)+\eta_2)/2) - F_f((\nu_1(0)+\eta_2)/2)| \leq r^{\vae}_{F_f}(\eta/2) = r^{\vae}_{F_f}(\eta).
\end{align*}
This proves the statement for $\vae >0$. The rest of the statements follows directly by taking the limit as $\vae \rightarrow 0$ and the definition of the upper semicontinuous regularization. 
\end{proof}
\begin{theorem}
Suppose $f \in L^1_{\rm loc}(X)$. Then there is a unique element $g_f \in L^1_{\rm loc}(X)$ such that 
$$r_{F_f} =F_{g_f}$$
if and only if $r_{F_f} \in \L^{1}_{\rm loc}(\X)$.

In particular, if $r_{F_f} \in \L^1_{\rm loc}(\X)$, then $r_{F_f}$ is continuous along curves, and hence $r_{F_f}=\breve{r}_{F_f}$.
\end{theorem}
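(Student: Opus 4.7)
The forward implication is immediate from Theorem \ref{eqlpnorm2}: if $r_{F_f} = F_{g_f}$ with $g_f \in L^1_{\rm loc}(X)$, then $\|r_{F_f}\|_{\L^1(\K)} = \|g_f\|_{L^1(K)} < \infty$ for every compact $K \subset X$. Uniqueness of $g_f$ is standard, since $f \mapsto F_f$ is injective on $L^1_{\rm loc}(X)$ (as already noted in Section \ref{LP}).

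For the converse, the plan is to apply Lemma \ref{FFf} to $F = r_{F_f}$, so it suffices to verify that $G_{r_{F_f}}$ is $[0,1]$-homogeneous and countably additive on $\M$. Homogeneity (\ref{Fadd1}) is immediate from the identity $r_{F_f}(t\eta) = r_{F_f}(\eta)$ for $t \in (0,1]$, which is explicitly recorded inside the proof of Lemma \ref{monlem}; together with $G_{r_{F_f}}(0)=0$ this gives $G_{r_{F_f}}(t\eta)=tG_{r_{F_f}}(\eta)$.

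The main content is the countable-additivity identity
$$\|\eta\|\,r_{F_f}(\eta) \;=\; \sum_i \|\eta_i\|\,r_{F_f}(\eta_i),\qquad \eta=\sum_i \eta_i \text{ in } \M.$$
The inequality ``$\geq$'' is immediate from Lemma \ref{monlem}: $\eta_i \leq \eta$ gives $r_{F_f}(\eta_i) \leq r_{F_f}(\eta)$, so $\sum_i \|\eta_i\|r_{F_f}(\eta_i) \leq r_{F_f}(\eta)\sum_i\|\eta_i\| = \|\eta\|r_{F_f}(\eta)$. For ``$\leq$'' I would fix any $\nu \in \WR(\X)$ with $\nu(0) = \eta$ and invoke Proposition \ref{split2} to obtain curves $\nu_i \in \WR(\M)$ with $\nu_i(0) = \eta_i$ and $\nu(s) = \sum_i \nu_i(s)$ for every $s \in [0,b_\nu]$. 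Since $\|\nu(s)\| = \|\eta\|$ and $\|\nu_i(s)\| = \|\eta_i\|$ by Theorem \ref{rectcont}, expanding the mean-value integrals gives
$$\|\eta\|\bigl(F_f(\nu(s)) - F_f(\eta)\bigr) \;=\; \sum_i \|\eta_i\|\bigl(F_f(\nu_i(s)) - F_f(\eta_i)\bigr).$$
Dividing by $s$, applying the triangle inequality, using $b_{\nu_i} = b_\nu$, and taking the supremum over $\nu$ and $0 < s < \vae \wedge b_\nu$ yields
$$\|\eta\|\, r_{F_f}^\vae(\eta) \;\leq\; \sum_i \|\eta_i\|\, r_{F_f}^\vae(\eta_i) \qquad \text{for every } \vae > 0.$$

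The one genuinely delicate point — and the place where the hypothesis $r_{F_f} \in \L^1_{\rm loc}(\X)$ actually enters — is justifying the interchange of $\lim_{\vae \to 0^+}$ with the infinite sum in the inequality above. The key observation is that $r_{F_f}(\eta)<\infty$ (which follows from $r_{F_f}\in\L^1_{\rm loc}(\X)$) together with the fact that $\vae \mapsto r_{F_f}^\vae(\eta)$ is monotone decreasing in $\vae$ forces $r_{F_f}^{\vae_0}(\eta) < \infty$ for some $\vae_0 > 0$: if not, then $r_{F_f}^\vae(\eta) = \infty$ for all $\vae > 0$ and its decreasing limit $r_{F_f}(\eta)$ would be infinite as well. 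Lemma \ref{monlem} then gives $r_{F_f}^{\vae_0}(\eta_i) \leq r_{F_f}^{\vae_0}(\eta)$, so the series is dominated uniformly in $\vae \in (0,\vae_0]$ by $\|\eta\|r_{F_f}^{\vae_0}(\eta)<\infty$, and dominated convergence finishes the job. The ``in particular'' statement is then essentially automatic: once $r_{F_f} = F_{g_f}$ with $g_f \in L^1_{\rm loc}(X)$ is established, Theorem \ref{contcurves} makes $F_{g_f}$ continuous along curves, and for each $\eta$ the curve $\nu$ produced by Lemma \ref{maxalc1} satisfies $\breve r_{F_f}(\eta)=\limsup_{s\to 0}r_{F_f}(\nu(s))=r_{F_f}(\eta)$, so $\breve r_{F_f}=r_{F_f}$.
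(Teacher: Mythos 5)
Your proof is correct and follows essentially the same route as the paper's: establish (\ref{Fadd1}) from $r_{F_f}(t\eta)=r_{F_f}(\eta)$, prove the lower bound in (\ref{Fadd2}) from Lemma \ref{monlem}, prove the upper bound via Proposition \ref{split2} at the level of $r^{\vae}_{F_f}$, and then pass to $\vae\to 0$. The only difference is cosmetic: you spell out the dominated-convergence justification for interchanging $\lim_{\vae\to 0^+}$ with the infinite sum, and you make the ``in particular'' clause explicit via Theorem \ref{contcurves} and Lemma \ref{maxalc1}, whereas the paper records these points more tersely (noting that $r^{\vae_0}_{F_f}(\eta)<\infty$ for some $\vae_0$ and then ``simply'' letting $\vae\to 0$).
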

\begin{remark}
A consequence of this is that $r_{F_f}=\breve{r}_{F_f} \in \L^{p}(\X)$ if and only if $g_f \in L^p(X)$, and the norms are the same.

Also note that $g_f$ then satisfies for every $\eta \in \WR(\X)$ and $s \in [0,b_{\eta}]$
$$ \left|\int f \, d\eta(s)-\int f d\eta(0)\right| \leq \int_0^s \left(\int g_f\,  d\eta(t)\right)\, dt.$$
\end{remark}
\begin{proof}
That $r_{F_f} \in \L^{1}_{\rm loc}(\X)$ is a necessary condition is self-evident considering Theorem \ref{eqlpnorm2}. 
If this is satisfied however, then the map $r_{F_f}$ is finite valued and 
we need to prove that the function $G_{r_{F_f}}$ satisfies (\ref{Fadd1}) and (\ref{Fadd2}).

To prove that it satsifies (\ref{Fadd1}) we simply note that $G_{r_{F_f}}(t\eta) = \|t\eta\|r_{F_f}(t\eta)=t\|\eta\|r_{F_f}(\eta)=tG_{r_{F_f}}(\eta)$, where we used the result $r_{F_f}(t\eta)=r_{F_f}(\eta)$ for $ 0<t\leq 1$ as we saw in the previous proof. For $t=0$ there is nothing to prove.

To prove (\ref{Fadd2}) we do it in two steps. Since for each $i$ we have, by Lemma \ref{monlem}, that $r_{F_f}(\eta_i) \leq r_{F_f}\left(\sum_{i=1}^{\infty} \eta_i\right)$ we get  
\begin{align*}
&\sum_{i=1}^{\infty}G_{r_{F_f}}(\eta_i) = \sum_{i=1}^{\infty} \|\eta_i\|r_{F_f}(\eta_i) \\
&\leq \sum_{i=1}^{\infty} \|\eta_i\| r_{F_f}\left(\sum_{i=1}^{\infty} \eta_i\right)=
\|\sum_{i=1}^{\infty} \eta_i\| r_{F_f}\left(\sum_{i=1}^{\infty}\eta_i\right) = G_{r_{F_f}}\left(\sum_{i=1}^{\infty} \eta_i\right).
\end{align*}
To prove the opposite inequality we appeal to Proposition \ref{split2}. Using the notation from that proposition we get for any $\vae >0$ and $s \leq \vae \wedge b_{\eta}$
\begin{align*}
&\left| \|\eta(s)\| F_f(\eta(s)) -\|\eta(0)\| F_f(\eta(0))\right| =  \left|\sum_{i=1}^{\infty} (\|\eta_i(s)\| F_f(\eta_i(s)) - \|\eta_i(0)\|F_f(\eta_i(0))\right| \\
&\leq \sum_{i=1}^{\infty} \|\eta_i(0)\| \cdot |F_f(\eta_i(s)) - F_f(\eta_i(0))| \leq \sum_{i=1}^{\infty} \|\eta_i(0)\| r^{\vae}_{F_f}(\eta_i(0)) 
\end{align*}
(Note that for some $\vae >0$ the value $r_{F_f}^{\vae}(\eta(0))$ is finite, since otherwise $r_{F_f} (\eta(0))$ would also be infinite, and hence have infinite $\L^{p}(\X)$-norm.)
Therefore we have for all $\vae >0$ small enough that 
$$\|\eta(0)\|r^{\vae}_{F_f}(\eta(0)) \leq \sum_{i=1}^{\infty} \|\eta_i(0)\|r_{F_f}^{\vae}(\eta_i(0)).$$If we simply let $\vae \rightarrow 0$ on both sides we see that this also holds for $\vae =0$, and this is exactly the statement 
$$G_{r_{F_f}}(\eta(0)) \leq \sum_{i=1}^{\infty}G_{r_{F_f}}(\eta_i(0)).$$
\end{proof}
\begin{proposition}\label{fundprop}
Suppose $f \in L^1_{\rm loc}(X)$ and $g \in L^1_{\rm loc}(X)$ is non-negative.
If $g$ satisfies 
\begin{equation}\label{fundest}
 \left|\int f \, d\eta(s)-\int f \,  d\eta(0)\right| \leq \int_0^s \left(\int g \, d\eta(t)\right) \, dt
 \end{equation}
for every $\eta \in \WR(\X)$ and $s \in [0,b_{\eta}]$, then $r_{F_f} \in \L^{1}_{\rm loc}(\X)$ and $g_f \leq g$ $\mu$-a.e. 
\end{proposition}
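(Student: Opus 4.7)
The plan is to convert the hypothesis, which is phrased in terms of the linear functional $\eta \mapsto \int f\,d\eta$, into an estimate directly on $F_f$ and then invoke the converse half of Theorem \ref{fundth2}. The key observation is that along any $\eta \in \WR(\X)$ the total mass $\|\eta(s)\|$ is constant by Theorem \ref{rectcont}(1); hence dividing the hypothesis through by the common value $\|\eta(0)\|=\|\eta(t)\|$ rewrites it as
\[
|F_f(\eta(s))-F_f(\eta(0))| \leq \int_0^s F_g(\eta(t))\,dt.
\]
Moreover, by Theorem \ref{contcurves} the function $F_g$ is continuous along curves, so in particular $\limsup_{s\to 0} F_g(\eta(s)) \leq F_g(\eta(0))$. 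The two hypotheses of the second part of Theorem \ref{fundth2} are therefore satisfied with the majorizing function taken to be $F_g$, and I conclude
\[
\breve{r}_{F_f}(\eta) \leq F_g(\eta) \quad \text{for every } \eta \in \X.
\]

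Since $r_{F_f} \leq \breve{r}_{F_f} \leq F_g$ and Theorem \ref{eqlpnorm2} gives $F_g \in \L^1_{\rm loc}(\X)$ (with norm equal to $\|g\|_{L^1(K)}$ on each compact $K$), the monotonicity of $\|\cdot\|_{\L^1(\K)}$ yields $r_{F_f} \in \L^1_{\rm loc}(\X)$. This is precisely the hypothesis of the theorem immediately preceding this proposition, so there exists a unique $g_f \in L^1_{\rm loc}(X)$ with $r_{F_f} = F_{g_f}$, and the pointwise inequality on $\X$ becomes $F_{g_f} \leq F_g$.

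It remains to upgrade the inequality $F_{g_f} \leq F_g$ on $\X$ to $g_f \leq g$ $\mu$-a.e. Multiplying by $\|\eta\|$, the inequality says $\int g_f\,d\eta \leq \int g\,d\eta$ for every $\eta \in \M$. For any Borel set $B$ contained in a compact set and satisfying $\mu(B)<\infty$ (automatic here, since compact sets have finite measure in our setting), testing against $\eta = \mu|_B$ gives $\int_B (g_f-g)\,d\mu \leq 0$. Since $X$ is locally compact and separable it is $\sigma$-compact; fixing a compact exhaustion $X=\bigcup_n K_n$ and applying this with $B=K_n \cap \{g_f \geq g + 1/k\}$ forces $\mu(B)=0$ for every $n,k$. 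Letting $n,k \to \infty$ shows $\{g_f > g\}$ is a $\mu$-null set, i.e.\ $g_f \leq g$ $\mu$-a.e.

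I do not expect any genuine obstacle: the hypothesis is exactly tailored to the converse direction of Theorem \ref{fundth2}, and the only small subtlety is the very first reduction, where one must notice that the mean-value normalization $1/\|\eta(s)\|$ can be pulled outside the integral because mass is conserved along rectifiable curves in $\WR(\X)$. Everything after that is bookkeeping: invoking the existence theorem for $g_f$, and passing from a pointwise comparison of $F_{g_f}$ and $F_g$ on $\X$ to an a.e.\ comparison of the densities via $\sigma$-compactness.
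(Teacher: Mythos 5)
Your proof is correct and takes the same approach as the paper, which gives only the one-line remark that the hypothesis implies $\breve{r}_{F_f}\le F_g$; you have simply filled in the details that the paper considered "more or less immediate" --- the mass conservation that allows dividing by $\|\eta(s)\|$, the continuity of $F_g$ along curves needed for the converse part of Theorem \ref{fundth2}, the passage to $g_f$ via the preceding existence theorem, and the standard testing against restricted measures $\mu|_B$ to deduce the a.e.\ pointwise inequality.
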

\begin{remark}
Hence $g_f$ is $\mu$-a.e. the smallest such function $g$ that satisfies the above estimate.
Also note that if (\ref{fundest}) holds for all $\eta \in \WR(\X)$, then it also holds for all $\eta \in \WR(\M)$ since the only curve in $\WR(\M)$ that does not belong to $\WR(\X)$ is identically zero for which the statement trivially is true.
\end{remark}
\begin{proof}
This is more or less immediate from the definitions, since this implies that $\breve{r}_{F_f} \leq F_g$.
\end{proof}
\begin{proposition}\label{lipgrad}
Suppose $f$ is Lipschitz continuous on $X$ with Lipschitz constant $C$, then $g_f \leq C$ $\mu$-a.e.
\end{proposition}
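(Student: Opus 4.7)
The plan is to invoke Proposition \ref{fundprop} with the constant function $g\equiv C$. Since $\|\eta(t)\|$ is constant along any $\eta \in \WR(\X)$ by Theorem \ref{rectcont}(1), the hypothesis of that proposition reduces to establishing
$$\left|\int f\,d\eta(s) - \int f\,d\eta(0)\right| \leq Cs\|\eta(0)\|$$
for every $\eta \in \WR(\X)$ and $s \in [0,b_\eta]$; the conclusion $g_f \leq C$ then follows immediately.

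The heart of the argument is a sharp transport estimate: for $\nu_1,\nu_2 \in \M$ with equal mass $\|\nu_1\|=\|\nu_2\|=m$ and $\textrm{diam}(\supp(\nu_1)\cup\supp(\nu_2))\leq \vae$, one has
$$\left|\int f\,d\nu_1 - \int f\,d\nu_2\right| \leq C\vae m.$$
I would prove this by introducing, when $m>0$, the product coupling $\pi := m^{-1}\nu_1\otimes\nu_2$ on $X\times X$; its marginals are $\nu_1$ and $\nu_2$, its total mass is $m$, and its support lies in $\supp(\nu_1)\times\supp(\nu_2)$, on which $d_X(x,y)\leq\vae$. Then
$$\int f\,d\nu_1 - \int f\,d\nu_2 = \int\bigl(f(x)-f(y)\bigr)\,d\pi(x,y),$$
and the Lipschitz bound $|f(x)-f(y)|\leq C\vae$ on $\supp(\pi)$ gives the claim. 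The sharpness is crucial here: comparing both integrands to a single value $f(x_0)$ on the combined support only yields $2C\vae m$ and would force the weaker conclusion $g_f \leq 2C$.

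To assemble the global estimate, fix $\eta \in \WR(\X)$ and $s \in [0,b_\eta]$, and let $K=\bigcup_{t\in[0,b_\eta]}\supp(\eta(t))$, which is compact by Theorem \ref{rectcont}(2) and Lemma \ref{K_t}; set $M:=\sup_K |f| < \infty$. Partition $[0,s]$ into $n$ equal pieces $s_k=ks/n$. Since $\eta$ is $1$-Lipschitz, $d_{\M}(\eta(s_k),\eta(s_{k-1})) \leq s/n$, so for any $\vae > s/n$ one may pick $(\nu_i^{(k)},\eta_i^{(k)})_i \in \Gamma_{\vae}(\eta(s_k),\eta(s_{k-1}))$. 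For each $i$ split $\nu_i^{(k)}$ and $\eta_i^{(k)}$ into a common-mass part (of mass $m_i^{(k)}=\min\{\|\nu_i^{(k)}\|,\|\eta_i^{(k)}\|\}$, controlled via the transport estimate by $C\vae m_i^{(k)}$) and an excess-mass part (bounded trivially by $M|\|\nu_i^{(k)}\|-\|\eta_i^{(k)}\||$). Summing over $i$, using $\sum_i m_i^{(k)}\leq\|\eta(0)\|$ and $\sum_i|\|\nu_i^{(k)}\|-\|\eta_i^{(k)}\||\leq h(\vae)$, letting $\vae\to s/n$, and summing the resulting triangle inequality over $k$:
$$\left|\int f\,d\eta(s)-\int f\,d\eta(0)\right| \leq Cs\|\eta(0)\| + nMh(s/n).$$
The main obstacle is that a single-step bound leaves an uncontrolled additive error of size $Mh(s)$ from the mass mismatch; only the refinement $n\to\infty$ absorbs it, since $nMh(s/n)=sM\cdot h(s/n)/(s/n) \to 0$ by the defining property of $h$. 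In the limit the required inequality emerges and Proposition \ref{fundprop} finishes the proof.
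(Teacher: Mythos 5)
Your proof is correct, and it takes a genuinely different route from the paper's. The paper decomposes the entire curve $\eta$ into countably many sub\emph{curves} $\nu_i(t) \in \WR(\M)$ (via Proposition~\ref{split2}), each chosen so that $\supp(\nu_i(0)) \cup \supp(\nu_i(s))$ has small diameter; since $\|\nu_i(t)\|$ is constant along these sub-curves, there is no mass mismatch to worry about, and the Lipschitz estimate is applied directly to each piece. You instead work directly from the $\Gamma_{\vae}$-decomposition in the definition of $d_{\M}$, stepping through a fine partition of $[0,s]$, splitting each $\Gamma_{\vae}$-pair into a matched-mass part and an excess part, and absorbing the accumulated mass error $nMh(s/n)$ using the defining property $h(\vae)/\vae \to 0$. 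Your route avoids Proposition~\ref{split2} entirely and so is more self-contained, at the cost of the refinement-and-limit argument; the paper's route is shorter once the curve decomposition machinery is available. Two details in your write-up deserve emphasis relative to the paper's terse proof: first, the product-coupling argument giving the sharp bound $C\vae m$ (rather than $2C\vae m$) is genuinely needed by both approaches and the paper leaves it implicit; second, the paper's decomposition must be anchored at an interior time (e.g.\ $s/2$) to obtain the claimed $(1+\vae)s$ diameter bound — anchoring at $t=0$ only gives $(2+\vae)s$ — which is another subtlety your route sidesteps since you control the two-sided diameter directly from $\Gamma_{\vae}$.
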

\begin{proof}
If $\eta \in \WR(\X)$, and we decompose $\eta(t)= \sum_{i=1}^{\infty} \nu_i(t)$ in such a way that the diameter of ${\rm supp}(\nu_i(s)) \cup {\rm supp}(\nu_i(0))$ is at most $(1+\vae)s$ say where $\vae >0$, then
\begin{align*}
& \left| \int f \, d\eta(s)- \int f \, d\eta(0)\right| =  \left|\sum_{i=1}^{\infty} \left( \int f \, d\nu_i(s)- \int f \, d\nu_i(0)\right)\right| \\
&\leq  \left| \sum_{i=1}^{\infty} C(1+\vae)s||\nu_i(0)||\right| =C\|\eta(0)\|(1+\vae)s.
\end{align*}
Since $\vae>0$ is arbitrary the statement follows from Proposition \ref{fundprop}.
\end{proof}
\begin{proposition}\label{ugfb1}
Suppose $f,h \in L^1_{\rm loc}(X)$ and $a \in \R$. Then the following holds
\begin{itemize}
\item[(1)] $g_{f+h} \leq g_f + g_h$,
\item[(2)] $g_{af} =|a|g_{f}$,
\end{itemize}
\end{proposition}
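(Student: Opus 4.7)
The plan is to deduce both statements from Theorem \ref{ugbasic} together with Proposition \ref{fundprop}, exploiting the linearity $F_{f+h} = F_f + F_h$ and $F_{af} = aF_f$ of the map $f \mapsto F_f$.

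For (2), the cleanest route is direct: by Theorem \ref{ugbasic}(1),
$$r_{F_{af}} = r_{aF_f} = |a| r_{F_f} = |a| F_{g_f} = F_{|a|g_f},$$
so $|a|g_f$ already represents $r_{F_{af}}$. By uniqueness of the representative (up to a.e. equality), this forces $g_{af} = |a|g_f$ $\mu$-a.e.

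For (1), I would first observe that by Theorem \ref{ugbasic}(2),
$$r_{F_{f+h}} = r_{F_f + F_h} \leq r_{F_f} + r_{F_h} = F_{g_f} + F_{g_h} = F_{g_f + g_h}.$$
Since $g_f + g_h \in L^1_{\rm loc}(X)$, the right-hand side lies in $\L^1_{\rm loc}(\X)$, so $r_{F_{f+h}} \in \L^1_{\rm loc}(\X)$ and $g_{f+h}$ is therefore well-defined by the theorem preceding Proposition \ref{fundprop}. To conclude $g_{f+h} \leq g_f + g_h$ a.e., I would appeal to Proposition \ref{fundprop} with the non-negative function $g_f + g_h$: for every $\eta \in \WR(\X)$ and $s \in [0,b_\eta]$,
\begin{align*}
\left|\int (f+h)\,d\eta(s) - \int (f+h)\,d\eta(0)\right| &\leq \left|\int f\,d\eta(s) - \int f\,d\eta(0)\right| \\
&\quad+ \left|\int h\,d\eta(s) - \int h\,d\eta(0)\right| \\
&\leq \int_0^s \int g_f\,d\eta(t)\,dt + \int_0^s \int g_h\,d\eta(t)\,dt \\
&= \int_0^s \int (g_f + g_h)\,d\eta(t)\,dt,
\end{align*}
where the second inequality is the integral estimate from the remark following the previous theorem, applied to each of $f$ and $h$ separately. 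Proposition \ref{fundprop} then yields $g_{f+h} \leq g_f + g_h$ $\mu$-a.e.

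There is no real obstacle here; the only thing worth double-checking is the well-definedness of $g_{f+h}$, which is handled by the bound $r_{F_{f+h}} \leq F_{g_f+g_h} \in \L^1_{\rm loc}(\X)$ before invoking Proposition \ref{fundprop}.
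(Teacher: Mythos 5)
Your proposal is correct and takes essentially the same route as the paper, which simply says the result ``follows from the definitions and Theorem \ref{ugbasic}, since $f\mapsto F_f$ is linear''; you have just filled in the details. One minor observation: for part (1) the detour through Proposition \ref{fundprop} is slightly roundabout, since once you have $r_{F_{f+h}} = F_{g_{f+h}} \le F_{g_f+g_h}$ pointwise on $\X$ you can read off $g_{f+h}\le g_f+g_h$ $\mu$-a.e.\ directly by testing against $\eta=\mu|_A$ for compact $A$ — but your argument is valid as written.
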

\begin{proof}
This follows more or less immediately from the definitions and Theorem \ref{ugbasic}, since the map $f \mapsto F_f$ is linear.
\end{proof}
\begin{lemma}\label{chainrule}
Suppose $k \in L^1_{\rm loc}(X)$ has an upper gradient $g_k \in L^1_{\rm loc}(X)$ and $f:\R \rightarrow \R$ has a bounded and Lipschitz continuous derivative. Then
$$g_{f \circ k}(x) \leq |f'(k(x))|g_k(x).$$ 
\end{lemma}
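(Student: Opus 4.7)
The plan is to verify the inequality via Proposition \ref{fundprop}: we check that $|f'(k)|g_k$, which lies in $L^1_{\rm loc}(X)$ (since $|f'|$ is bounded, say by $M$, and $g_k\in L^1_{\rm loc}(X)$), controls $f\circ k$ along every rectifiable curve in $\WR(\X)$. Because $u\mapsto \int |f'(k)|g_k\,d\eta(u)$ is continuous on $[0,b_\eta]$ by Theorem \ref{contcurves}, Lemma \ref{fundth} reduces the task to the one-sided derivative bound
$$\limsup_{s\to 0^+}\frac{1}{s}\left|\int (f\circ k)\,d\eta(s)-\int (f\circ k)\,d\eta(0)\right|\leq \int |f'(k)|g_k\,d\eta(0),$$
for each $\eta\in\WR(\X)$; by time translation it is enough to treat the initial time.

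The main estimate is proved by a \emph{level-set decomposition}. Fix $\delta>0$, set $E_n=k^{-1}([n\delta,(n+1)\delta))$, $\mu_n=\mu|_{E_n}$, and $\bar k_n=n\delta$. For each $s>0$, Corollary \ref{split3} applied with $\{\mu_n\}$ and time points $0,s$ produces $\eta(u)=\sum_j \nu_j^s(u)$ with $\nu_j^s\in\WR(\M)$, $\nu_j^s(0)\leq \mu_{n_0(j)}$ and $\nu_j^s(s)\leq \mu_{n_1(j)}$. Since $\|\nu_j^s(u)\|$ is constant (Theorem \ref{rectcont}) and $|f\circ k - f(\bar k_n)|\leq M\delta$ on $E_n$, summing over $j$ yields
$$\left|\int (f\circ k)\,d\eta(s)-\int (f\circ k)\,d\eta(0)\right|\leq \sum_j |f(\bar k_{n_1(j)})-f(\bar k_{n_0(j)})|\,\|\nu_j^s(0)\|+2M\delta\|\eta(0)\|.$$
Taylor expansion, using $\mathrm{Lip}(f')\leq L$, bounds each summand by $|f'(\bar k_{n_0(j)})|\,|\bar k_{n_1(j)}-\bar k_{n_0(j)}|+\tfrac{L}{2}(\bar k_{n_1(j)}-\bar k_{n_0(j)})^2$, while the upper-gradient defining inequality for $g_k$ applied to $\nu_j^s$ gives $|\bar k_{n_1(j)}-\bar k_{n_0(j)}|\,\|\nu_j^s(0)\|\leq \int_0^s\!\!\int g_k\,d\nu_j^s(u)\,du+2\delta\|\nu_j^s(0)\|$. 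Finally, on $\supp(\nu_j^s(0))\subset E_{n_0(j)}$ we have $||f'(\bar k_{n_0(j)})|-|f'(k)||\leq L\delta$, which lets us absorb the $j$-dependent constants $|f'(\bar k_{n_0(j)})|$ into the $X$-integrals and recast the leading sum as $\int_0^s\!\!\int |f'(k)|g_k\,d\eta(u)\,du$ up to errors vanishing with $\delta$.

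The main obstacle is that the index $n_0(j)$ only records the support of $\nu_j^s$ at time $u=0$, not at intermediate $u\in(0,s)$, so the pointwise bound $||f'(\bar k_{n_0(j)})|-|f'(k(x))||\leq L\delta$ holds a priori only at the initial time. This is circumvented by taking limits in the right order: first $s\to 0^+$, exploiting the continuity of $u\mapsto F_{|f'(k)|g_k}(\eta(u))$ and $u\mapsto F_{g_k}(\eta(u))$ from Theorem \ref{contcurves} to replace $\frac{1}{s}\int_0^s\cdots\,du$ by its $u=0$ value; and then $\delta\to 0^+$ to eliminate the $\delta$-dependent errors. The Taylor remainder $\tfrac{L}{2}\sum_j(\bar k_{n_1(j)}-\bar k_{n_0(j)})^2\|\nu_j^s(0)\|$ is disposed of via $\sum_j a_j^2 w_j \leq (\max_j a_j)\sum_j a_j w_j$, which is $o(s)$ for fixed $\delta$ because each $|\bar k_{n_1(j)}-\bar k_{n_0(j)}|$ tends to $0$ as $s\to 0^+$.
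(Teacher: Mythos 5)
The central idea (apply Proposition \ref{fundprop}, reduce to a one-sided derivative bound via Lemma \ref{fundth}, and decompose each curve through Corollary \ref{split3} along level sets of $k$) is the right one and is indeed what the paper does. However, the order of limits you propose does not work, and this is a genuine gap rather than a notational gloss.

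Your discretisation uses level sets $E_n=k^{-1}([n\delta,(n+1)\delta))$ of \emph{fixed} width $\delta$, independent of $s$. Replacing $f\circ k$ by $f(\bar k_n)$ on $E_n$, and $\int k\,d\nu_j^s(\cdot)$ by $\bar k_{n_\bullet(j)}\|\nu_j^s(\cdot)\|$, produces additive errors on the order of $M\delta\|\eta(0)\|$ (and $M\delta\|\nu_j^s(0)\|$ inside the upper-gradient bound), and these do \emph{not} carry a factor of $s$. After dividing by $s$ in the difference quotient, they become $O(\delta/s)$, which diverges as $s\to 0^+$ with $\delta$ fixed. So the proposed order of limits (``first $s\to 0^+$, then $\delta\to 0^+$'') gives $\limsup_{s\to 0^+}\le +\infty$ at the intermediate stage, which is vacuous. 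The paper avoids exactly this by taking the level-set width to be $s^2$, so that the discretisation errors are $O(s^2)$ and vanish after division by $s$. Two further steps you omit are then also needed: intersecting the level sets with a cover of $K$ by balls of radius $s$, and invoking Lusin's theorem plus a continuous approximant $\tilde g_k$ of $g_k$, so that the modulus-of-continuity $\psi$ on the Lusin set $K'$ controls the behaviour at intermediate times $u\in(0,s)$; your suggestion to handle that issue by the continuity of $u\mapsto F_{g_k}(\eta(u))$ does not close the argument because the $j$-dependent coefficients $|f'(\bar k_{n_0(j)})|$ prevent passing to $\eta(u)$ before they have been absorbed pointwise.

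A secondary error: the claim that ``each $|\bar k_{n_1(j)}-\bar k_{n_0(j)}|$ tends to $0$ as $s\to 0^+$'' is false for fixed $\delta$. That quantity is an integer multiple of $\delta$, hence either $0$ or $\ge \delta$; the indices $j$ themselves change with $s$, so there is no pointwise-in-$j$ limit to speak of, and $\max_j|\bar k_{n_1(j)}-\bar k_{n_0(j)}|$ need not shrink. Consequently the disposal of the Taylor remainder via $\sum_j a_j^2w_j\le(\max_j a_j)\sum_j a_j w_j$ does not yield an $o(s)$ bound with fixed $\delta$.
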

\begin{proof}
Assume that $C>0$ is such that $|f'(x)| \leq C$ and $|f'(x)-f'(y)| \leq C|x-y|$ for all $x,y \in \R$.
It is enough to prove that for any given $\eta \in \WR(\X)$ with $b_{\eta} >0$ we have
\begin{equation}\label{chainest}
\limsup_{s \rightarrow 0} \left| \frac{1}{s} \left( \int f(k(\cdot)) \, d\eta(s) - \int f(k(\cdot)) d \, \eta(0)\right)\right| \leq \int |f'(k(\cdot))|g_k \, d \eta(0).
\end{equation}
To do so let $K = \cup_{s \in [0,b_{\eta}]} \supp(\eta(s))$ which is a compact subset of $X$ and let $\vae >0$ be fixed. Also fix a continuous function $\tilde{g}_k$ such that 
$$\int_K |g_k - \tilde{g}_K| d \mu \leq \vae.$$
By absolute continuity we may furthermore choose $\delta>0$ such that for any $A \subset K$ with $\mu(A)<\delta$ we have $\int_A g_k d\mu \leq \vae/2$. It is then easy to see that with 
$$\delta' = \frac{\delta \vae }{2\int_K g_k d\mu}$$
we have
$$\int  g_k \phi d\mu \leq \vae$$
for any $\phi$ with compact support in $K$ and values in $[0,1]$ such that
$$\int \phi d\mu \leq \delta'.$$
I.e. we have that 
$$\int g_k d\eta \leq \vae$$
for any element $\eta \in \X$ with support in $K$ and $\|\eta\|\leq \delta'$.
By Lusin's theorem we may choose $K' \subset K$ compact such that $k|_{K'}$ is continuous and $\mu(K\setminus K') < \delta'/2$. Let $\psi$ be a common modulus of continuity for $\tilde{g}_k$ and $k|_{K'}$. For $s \in (0,b_{\eta}]$ fixed we may then cover $K$ by balls $B_1,B_2,\ldots,B_m$ of radius at most $s$. If we introduce the sets 
$$A_{i,j}=\{x \in X: (i-1)s^2 \leq k(x) <is^2\} \cap B_j \cap K' \textrm{ for } i \in \mathbb{Z}, j=1,2,\ldots,m,$$
$$ A_{i,m+1}=\{x \in X: (i-1)s^2 \leq k(x) <is^2\} \setminus K',$$
and the measures 
$$\mu_{i,j} = \mu|_{A_{i,j}}, \quad i \in  \mathbb{Z}, j \in \{1,2,\ldots,m+1\},$$
then clearly
$$\mu = \sum_{i,j} \mu_{i,j}.$$

If we apply Corollary \ref{split3} to $\eta$, $t_0=0$, $t_1=s$ and $\MM = \mathbb{Z} \times \{1,2,\ldots,m+1\}$ with $\mu_{i,j}$ as above we get that there are curves $\nu_j \in \WR(\X)$ such that
\begin{itemize}
\item for each $j \in \N$ there is $(i_0,j_0),(i_s,j_s) \in \MM$ such that $\nu_j(0) \leq \mu_{i_0,j_0}$ and $\nu_j(s) \leq \mu_{i_s,j_s}$,
\item $\eta(t) = \sum_{j=1}^{\infty} \nu_j(t)$ for every $t \in [0,b_{\eta}]$.
\end{itemize}

So on carriers for the measures $\nu_j(0)$ and $\nu_j(s)$ we have that the oscillation of $k$ is no more than $s^2$, and hence differs from its mean value with respect to these measures by at most $s^2$.
(Also note that we do not make any claims of this nature for the values between $0$ and $s$, but only for these end-points).

Let $I =\{j:\nu_j \ne 0\}$, $I_1=\{j \in I: j_0 =m+1\} \cup\{j \in I: j_s=m+1\}$ and $I_2 = I \setminus I_1$. Note that $\sum_{j \in I_1} ||\nu_j|| \leq 2\mu(K \setminus K') \leq \delta'$. Now we get for suitable $\tau_{j,s}(x)$ between $k(x)$ and $F_k(\nu_{j}(s))$ and $\theta_{j,s}$ between $F_k(\nu_{j}(s))$ and $F_k(\nu_{j}(0))$ 
\begin{align*}
& \frac{1}{s} \left|\int f(k(\cdot)) \, d\eta(s) - \int f(k(\cdot)) \, d\eta(0)\right| \\
&= \frac{1}{s} \left| \sum_{j \in I} \left(\int f(k(\cdot)) \, d\nu_{j}(s) - \int f(k(\cdot))\,  d\nu_{j}(0)\right)\right| \\
&= \frac{1}{s} \left| \sum_{j \in I} \left(\int\left( 
f(F_k(\nu_{j}(s))) + f'(\tau_{j,s}(\cdot))(k(\cdot)-F_k(\nu_{j}(s)))\right) \, d\nu_{j}(s)  \right.\right. \\
&- \left.\left.\int\left( f(F_k(\nu_{j}(0))) + f'(\tau_{j,0}(\cdot))(k(\cdot)-F_k(\nu_{j}(0)))\right) \, d\nu_{j}(0)\right)\right|  \\
&\leq \frac{1}{s}\left|\sum_{j \in I} f'(\theta_{j,s})(F_k(\nu_{j}(s))-F_k(\nu_{j}(0)))\|\nu_{j}(0)\|\right|  \\
&+ \sum_{j \in I}\frac{C}{s}\left|\int (k-F_k(\nu_{j}(s))) \, d\nu_{j}(s)\right| +  \sum_{j \in I}\frac{C}{s}\left|\int (k-F_k(\nu_{j}(0))) \, d\nu_{j}(0)\right|  \\
&\leq \frac{1}{s}\sum_{j \in I} |f'(\theta_{j,s})| \int_0^s \int g_k \, d\nu_{j}(t)dt + 2C\mu(K)s  \\
&\leq \frac{1}{s}\sum_{j \in I_1} |f'(\theta_{j,s})| \int_0^s \int g_k \, d\nu_{j}(t)dt + \frac{1}{s}\sum_{j \in I_2} |f'(\theta_{j,s})| \int_0^s \int g_k \, d\nu_{j}(t)dt + 2C\mu(K)s\\
& \leq \frac{1}{s} C \int_0^s (\int g_k \, d\left(\sum_{j \in I_1} \nu_{j}(t)\right))dt  + \frac{1}{s}\sum_{j \in I_2} |f'(\theta_{j,s})| \int_0^s \int \tilde{g}_k \, d\nu_{j}(t)dt +C(2\mu(K)s+\vae)\\
&\leq C\vae + \sum_{j \in I_2} |f'(\theta_{j,s})| \left| \frac{1}{s} \int_0^s \int \tilde{g}_k \, d\nu_j(t) dt - \int \tilde{g}_k \, d\nu_j(s)\right| + \sum_{j \in I_2} \int |f'(k(\cdot))| \tilde{g}_k \, d\nu_j(s) \\
&+ \sum_{j \in I_2} \int |f'(\theta_{j,s}) - f'(k(\cdot))|\tilde{g}_k \, d\nu_j(s) + C(2\mu(K)s+\vae)  \\
&\leq C\vae +2C\psi(4s)\mu(K)+ \int |f'(k(\cdot))| g_k d\eta(s) + C\vae  \\
&+ \sum_{j \in I_2} \int C|\theta_{j,s} - k(\cdot)|\tilde{g}_k \, d\nu_j(s)+C(2\mu(K)s+\vae)\\
&\leq \int |f'(k(\cdot))| g_k d\eta(s)  + 2C\psi(4s)\mu(K) + 4C \psi(4s) \int_K \tilde{g}_k\, d\mu +C(2\mu(K)s+3\vae).
\end{align*}
Above we used the fact that for $j \in I_2$ the diameter of $\cup_{0 \leq t \leq s}\supp(\eta_j(t))$ is at most $4s$, and hence $\tilde{g}_k$ differs from its mean value by at most $\psi(4s)$, and similarly we get
$$|\theta_{j,s}-k| \leq 4\psi(4s),$$
since both measures $\nu_j(0)$ and $\nu_j(s)$ are supported by $K'$.
If we let $s$ go to zero the last expression in the estimate above goes to 
$$\int |f'(k(\cdot))| g_k d\eta(0)  + 3C\vae.$$
 And since $\vae>0$ is arbitrary this implies that (\ref{chainest}) holds as was to be proved.
\end{proof}
\begin{theorem}\label{lattice1}
If $k \in L^1_{\rm loc}(X)$ has an upper gradient $g_k \in L^1_{\rm loc}(X)$ and $c \in \R,$ then  $$g_{k \wedge c} = g_k \chi_{\{k<c\}}$$
and
$$g_{k \vee c} = g_k \chi_{\{k >c\}}.$$
\end{theorem}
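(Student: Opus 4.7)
The plan is to establish, separately, the two upper bounds $g_{k\wedge c}\le g_k\chi_{\{k<c\}}$ and $g_{k\vee c}\le g_k\chi_{\{k>c\}}$ via smooth approximation and the chain rule (Lemma \ref{chainrule}), and then to upgrade these to equalities using the identity $(k\wedge c)+(k\vee c)=k+c$ combined with the sublinearity of $f\mapsto g_f$ from Proposition \ref{ugfb1}.

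For the upper bound on $g_{k\wedge c}$, I would fix a sequence of $C^2$ functions $\psi_n:\R\to\R$ with $\psi_n(x)=x$ for $x\le c-2/n$, $\psi_n$ constant on $[c-1/n,\infty)$, $0\le\psi_n'\le 1$, and $\psi_n''$ bounded (so $\psi_n'$ is bounded and Lipschitz as required by Lemma \ref{chainrule}). Then $\psi_n(x)\to x\wedge c$ and $\psi_n'(x)\to\chi_{(-\infty,c)}(x)$ pointwise on $\R$, with $|\psi_n(x)|\le|x|+|c|$. The chain rule gives $g_{\psi_n\circ k}\le |\psi_n'(k)|\,g_k$ a.e., so for every $\eta\in\WR(\X)$ and $s\in[0,b_\eta]$,
\[
\left|\int \psi_n\circ k\, d\eta(s)-\int \psi_n\circ k\, d\eta(0)\right|\le\int_0^s\int |\psi_n'(k)|\,g_k\, d\eta(t)\, dt.
\]
Since $K=\bigcup_{t\in[0,b_\eta]}\supp(\eta(t))$ is compact by Theorem \ref{rectcont} and $k,g_k\in L^1(K)$, dominated convergence (with dominators $|k|+|c|$ on the left and $g_k\chi_K$ on the right, the outer $t$-integral being uniformly bounded by $\int_K g_k\, d\mu$) lets me pass to the limit $n\to\infty$ to obtain
\[
\left|\int (k\wedge c)\,d\eta(s)-\int (k\wedge c)\,d\eta(0)\right|\le\int_0^s\int\chi_{\{k<c\}}g_k\,d\eta(t)\,dt.
\]
Proposition \ref{fundprop} then yields $r_{F_{k\wedge c}}\in\L^1_{\rm loc}(\X)$ and $g_{k\wedge c}\le\chi_{\{k<c\}}g_k$ a.e. A symmetric construction (with $\tilde\psi_n$ flat to the left and linear to the right of $c$) gives $g_{k\vee c}\le\chi_{\{k>c\}}g_k$ a.e.

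To promote these to equalities I would apply sublinearity. Since constants have zero upper gradient by Proposition \ref{lipgrad}, Proposition \ref{ugfb1} yields $g_k=g_{(k\wedge c)+(k\vee c)-c}\le g_{k\wedge c}+g_{k\vee c}$, and combining with the bounds just proved,
\[
g_k\le g_{k\wedge c}+g_{k\vee c}\le\chi_{\{k<c\}}g_k+\chi_{\{k>c\}}g_k=\chi_{\{k\ne c\}}g_k\le g_k.
\]
Equality throughout forces $g_k=0$ a.e.\ on $\{k=c\}$ and $g_{k\wedge c}+g_{k\vee c}=g_k$ a.e.\ on $\{k\ne c\}$. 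On $\{k<c\}$ the second summand vanishes (as $g_{k\vee c}\le\chi_{\{k>c\}}g_k=0$ there), giving $g_{k\wedge c}=g_k$; on $\{k\ge c\}$ the upper bound $g_{k\wedge c}\le\chi_{\{k<c\}}g_k=0$ and the a.e.\ vanishing of $g_k$ on $\{k=c\}$ jointly give $g_{k\wedge c}=0$. Hence $g_{k\wedge c}=g_k\chi_{\{k<c\}}$ a.e., and the identity for $g_{k\vee c}$ follows analogously.

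The main obstacle will be the passage to the limit inside the double integral on the right-hand side of the chain-rule inequality: one must justify dominated convergence jointly in $(t,x)$, which relies crucially on the compactness of $\bigcup_t\supp(\eta(t))$ from Theorem \ref{rectcont} to obtain a uniform $L^1$-dominator for $\int g_k\,d\eta(t)$. A smaller but non-trivial point is arranging $\psi_n$ so that $\psi_n'$ is \emph{Lipschitz} (not merely continuous), which Lemma \ref{chainrule} explicitly demands; the $C^2$ construction above handles this cleanly.
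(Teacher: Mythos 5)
Your proposal is correct and follows essentially the same strategy as the paper's own proof: first establish the upper bounds $g_{k\wedge c}\le g_k\chi_{\{k<c\}}$ and $g_{k\vee c}\le g_k\chi_{\{k>c\}}$ by approximating the truncation with smooth functions whose derivatives satisfy the hypotheses of Lemma~\ref{chainrule}, then upgrade to equality using the subadditivity $g_k=g_{(k\wedge c)+(k\vee c)-c}\le g_{k\wedge c}+g_{k\vee c}$ together with $g_c=0$. The paper uses the concrete approximants $f_{\vae}(x)=\sqrt{x^2+\vae^2}-\vae$ (for $x>0$, else $0$) after reducing to $c=0$ and $k^+$, whereas you construct a general $C^2$ family $\psi_n$ and work with arbitrary $c$ directly, but the content is the same; your more explicit case analysis at the end (noting $g_k=0$ a.e.\ on $\{k=c\}$) is simply an unpacking of the paper's compressed chain $g_{k\vee c}\ge g_k\chi_{\{k\ge c\}}\ge g_k\chi_{\{k>c\}}$.
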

\begin{proof}
To prove the inequalities
\begin{equation}\label{ineqlat}
g_{k \wedge c} \leq g_k \chi_{\{k<c\}} \textrm{ and }g_{k \vee c} \leq g_k \chi_{\{k >c\}}.
\end{equation}
it is easy to see that we may without loss assume $c=0$ and that we look at the case $k^+ = k \vee 0$.
For each $\vae >0$ we may introduce the functions 
$$f_{\vae}(x) = \left\{ \begin{array}{ll} \sqrt{x^2+\vae^2} - \vae & x>0\\ 0 & x \leq 0.\end{array}\right.$$
Applying the previous lemma we get 
$$\left | \int f_{\vae}(k(\cdot)) \, d\eta(s) - \int f_{\vae}(k(\cdot)) \, d\eta(0)\right| \leq \int_0^s \int f'_{\vae}(k(\cdot))g_k \, d\eta(t) \, dt.$$
Since the left hand side converges to 
$$\left | \int k^+ \, d\eta(s) - \int k^+ \, d\eta(0)\right|,$$
and the right hand side to 
$$ \int_0^s \int\chi_{\{k>0\}}g_k \, d\eta(t) \, dt$$
we get the desired estimate.

However we also have
$$g_k = g_{k\wedge c + k \vee c -c} \leq g_{k \wedge c} + g_{k \vee c} + g_c,$$
but $g_c=0$  trivially, and therefore we get from the inequalities (\ref{ineqlat})
$$g_k \leq g_{k} \chi_{\{k<c\}} + g_{k \vee c},$$
or which amounts to the same thing
$$g_{k \vee c} \geq g_k\chi_{\{k \geq c\}} \geq  g_k\chi_{\{k > c\}}.$$
Similarly we get
$$g_{k \wedge c} \geq g_k \chi_{\{k<c\}}.$$

\end{proof}

\begin{proposition} \label{ugfb2}
Suppose $f,h \in L^1_{\rm loc}(X)$ are such that $g_f,g_h \in L^1_{\rm loc}(X)$ then
$$g_{fh} \leq |f|g_h + |h|g_f.$$ 
\end{proposition}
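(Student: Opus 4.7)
The plan is to verify the hypothesis of Proposition \ref{fundprop} with $f$ replaced by $fh$ and candidate upper gradient $g := |f|g_h + |h|g_f$. Both sides of the inequality
$$\left|\int fh\, d\eta(s) - \int fh\, d\eta(0)\right| \leq \int_0^s \int (|f|g_h + |h|g_f)\, d\eta(t)\, dt$$
are continuous in $s$ (Theorem \ref{contcurves} for the left side, absolute continuity for the right), so by Lemma \ref{fundth} it suffices to prove the infinitesimal bound
$$\limsup_{s \to 0^+}\frac{1}{s}\left|\int fh\, d\eta(s) - \int fh\, d\eta(0)\right| \leq \int (|f|g_h + |h|g_f)\, d\eta(0)$$
for every $\eta \in \WR(\X)$, the statement at other base points following by time-translation. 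Truncating by $f_n = (f \wedge n) \vee (-n)$, $h_n = (h \wedge n) \vee (-n)$, applying Theorem \ref{lattice1} so that $g_{f_n} \leq g_f$, $g_{h_n} \leq g_h$, and passing to the limit in the resulting integral inequality by monotone/dominated convergence, one may further assume that $f$ and $h$ are bounded.

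The argument then mirrors the proof of Lemma \ref{chainrule}. Fix $\eta \in \WR(\X)$ with $b_\eta > 0$, let $K = \bigcup_{t \in [0, b_\eta]} \supp(\eta(t))$ (compact by Theorem \ref{rectcont}), and fix $\vae > 0$. By Lusin's theorem and absolute continuity, choose a compact $K' \subset K$ on which $f, h, g_f, g_h$ are simultaneously continuous with common modulus $\psi$, continuous approximants $\tilde{g}_f, \tilde{g}_h$ on $K$ with $\|g_f - \tilde{g}_f\|_{L^1(K)} + \|g_h - \tilde{g}_h\|_{L^1(K)} \leq \vae$, and ensure that integrals of $|f|g_h + |h|g_f + |fh|$ over sufficiently small subsets of $K$ are bounded by $\vae$. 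For each small $s > 0$, choose $\delta(s) > 0$ with $\psi(\delta(s)) \leq \vae\sqrt{s}$, partition $K'$ into finitely many Borel sets of diameter at most $\delta(s)$, and apply Corollary \ref{split3} at $t_0 = 0, t_1 = s$ (using the partition together with the outside piece $K \setminus K'$) to obtain curves $\nu_j \in \WR(\X)$ with $\eta(t) = \sum_j \nu_j(t)$ and each $\nu_j(0), \nu_j(s)$ dominated by $\mu$ on a partition piece. Let $I_1$ index those $j$ meeting $K \setminus K'$, so $\sum_{j \in I_1}\|\nu_j\| \leq 2\mu(K \setminus K') \leq \vae$, and $I_2$ the rest.

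The central algebraic identity
$$\int fh\, d\nu_j(t) = F_f(\nu_j(t))F_h(\nu_j(t))\|\nu_j\| + R_j(t),\quad R_j(t) := \int(f - F_f(\nu_j(t)))(h - F_h(\nu_j(t)))\, d\nu_j(t),$$
yields
\begin{align*}
\int fh\, d\nu_j(s) - \int fh\, d\nu_j(0) &= F_f(\nu_j(s))\bigl[F_h(\nu_j(s)) - F_h(\nu_j(0))\bigr]\|\nu_j\| \\
&\quad + F_h(\nu_j(0))\bigl[F_f(\nu_j(s)) - F_f(\nu_j(0))\bigr]\|\nu_j\| + R_j(s) - R_j(0).
\end{align*}
For $j \in I_2$, the oscillations of $f$ and $h$ on $\supp(\nu_j(0)), \supp(\nu_j(s)) \subset K'$ are both at most $\psi(\delta(s)) \leq \vae\sqrt{s}$, giving $|R_j(0)|, |R_j(s)| \leq \vae^2 s\|\nu_j\|$, hence $s^{-1}\sum_{I_2}(|R_j(s)| + |R_j(0)|) \leq 2\vae^2\mu(K)$. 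The $I_1$ contributions are $O(\vae)$ by boundedness of $f, h$ and smallness of $\sum_{I_1}\|\nu_j\|$. The two main pieces are bounded, via $r_{F_f} = F_{g_f}$, $r_{F_h} = F_{g_h}$ and Theorem \ref{fundth2} applied to $\nu_j$, by $|F_f(\nu_j(s))|\int_0^s \int g_h\, d\nu_j(t)\, dt$ and $|F_h(\nu_j(0))|\int_0^s \int g_f\, d\nu_j(t)\, dt$.

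Finally, divide by $s$ and take $\limsup_{s \to 0^+}$. Replacing $g_h$ by $\tilde{g}_h$ introduces an error of at most $\|f\|_\infty \vae$ (summing uses $\sum_j \nu_j(t) \leq \mu|_K$), and likewise for $g_f$. For $j \in I_2$ and any reference point $x_j \in \supp(\nu_j(s)) \cap K'$ one has $|F_f(\nu_j(s))| \leq |f(x_j)| + \vae\sqrt{s}$, and by uniform continuity of $\tilde{g}_h$ on $K$ together with Theorem \ref{rectcont},
$$\frac{1}{s}\int_0^s \int \tilde{g}_h\, d\nu_j(t)\, dt = \tilde{g}_h(x_j)\|\nu_j\| + o_{s \to 0}(\|\nu_j\|).$$
Summing and using Theorem \ref{contcurves}, the first main piece is at most $\int |f|\tilde{g}_h\, d\eta(s) + O(\vae) \to \int |f|\tilde{g}_h\, d\eta(0) + O(\vae) \leq \int |f|g_h\, d\eta(0) + O(\vae)$; the second is handled analogously. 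The $\limsup$ is thus bounded by $\int(|f|g_h + |h|g_f)\, d\eta(0) + O(\vae)$, and letting $\vae \to 0$ finishes the proof. The principal obstacle is the quadratic cross-correlation error $R_j$: it is forced to be $o(s)\|\nu_j\|$ through the $s$-dependent choice $\delta(s) = \psi^{-1}(\vae\sqrt{s})$, so that the product of two small oscillations $\psi(\delta(s))^2 \leq \vae^2 s$ plays the role that the Lipschitz remainder of the first-order Taylor expansion plays in Lemma \ref{chainrule}.
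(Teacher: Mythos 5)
There is a genuine gap in the handling of the residual terms $R_j$ for $j \in I_1$, i.e.\ for those pieces meeting $K\setminus K'$. Your second-order residual
$$R_j(t) = \int\bigl(f - F_f(\nu_j(t))\bigr)\bigl(h - F_h(\nu_j(t))\bigr)\,d\nu_j(t)$$
is a nice variant of the paper's first-order one: for $j\in I_2$ the product of two oscillations of order $\psi(\delta(s))\leq \vae\sqrt{s}$ gives $|R_j(t)|\leq \vae^2 s\|\nu_j\|$, and the sum divided by $s$ is $O(\vae^2)$. But for $j\in I_1$ the only available estimate is the crude $|R_j(t)|\leq 4C^2\|\nu_j\|$ with $C=\|f\|_{L^\infty}\vee\|h\|_{L^\infty}$, so
$$\frac{1}{s}\sum_{j\in I_1}\bigl(|R_j(s)|+|R_j(0)|\bigr) \leq \frac{16C^2\mu(K\setminus K')}{s},$$
which blows up as $s\to 0^+$ for any fixed Lusin set $K'$. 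The sentence ``The $I_1$ contributions are $O(\vae)$ by boundedness of $f,h$ and smallness of $\sum_{I_1}\|\nu_j\|$'' is correct for the two main pieces, since there Theorem \ref{fundth2} supplies an explicit factor of $s$ through the time integral $\int_0^s$ and the absolute-continuity setup then kills the contribution, but it is not correct for the residuals, which carry no $s$ factor when $j\in I_1$. The auxiliary hypothesis that $\int(|f|g_h+|h|g_f+|fh|)\,d\eta'\leq\vae$ for small $\eta'$ does not rescue this either, because those bounds are $O(\vae)$, not $O(s\vae)$.

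The underlying issue is that Lusin's theorem is the wrong tool for $f$ and $h$: it leaves a bad set on which oscillation is uncontrolled. The paper avoids this entirely by partitioning $K$ according to the \emph{level sets} of $f$ and $h$ with mesh $s^2$ (intersected with balls of radius $\leq s^2$), so that the oscillation of $f$ and $h$ on every partition piece, with no exceptional set, is $\leq s^2$. The residuals --- the paper's first-order $\int(f-F_f(\nu_j(s)))h\,d\nu_j(s)$ or your second-order $R_j$ --- are then uniformly $\leq 2Cs^2\|\nu_j\|$, resp.\ $\leq s^4\|\nu_j\|$, and vanish after dividing by $s$. Only $g_f,g_h$ need continuous approximants, and a plain $L^1(K)$ approximation suffices since you only use $\int_K|g_f-\tilde g_f|\,d\mu\leq\vae$; no Lusin set for $g_f, g_h$ is needed either, and the reference-point evaluation $f(x_j)$ can be replaced by the paper's direct estimate $\sum_j\int\tilde g_h|f|\,d\nu_j(s) = \int\tilde g_h|f|\,d\eta(s)$. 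If you replace the Lusin decomposition of $f,h$ by the level-set partition of mesh $s^2$, your argument (including the $R_j$ identity, which is a legitimate alternative to the paper's decomposition) goes through; as written it does not.
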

\begin{proof}
We first of all reduce the problem to the case when $f$ and $h$ are bounded. To do so suppose the statement is true for bounded functions. Then we have (using Theorem \ref{lattice1}) for $c_1 \leq c_2 \in \R$
\begin{align*}
&\left| \int ((f \vee c_1) \wedge c_2)((h \vee c_1) \wedge c_2) \, d\eta(s) -  \int ((f \vee c_1) \wedge c_2)((h \vee c_1) \wedge c_2) \, d\eta(0) \right| \\
&\leq \int_0^s\left( \int (|(f \vee c_1)\wedge c_2|g_{(h\vee c_1)\wedge c_2)} + |(h\vee c_1) \wedge c_2|g_{(f\vee c_1)\wedge c_2}) \, d\eta(t)\right) \, dt\\
&\leq \int_0^s \left( \int (|f|g_h + |h|g_f) \, d\eta(t)\right)\, dt.
\end{align*}
Since we may then take the limit as first $c_2$ goes to infinity and then $c_1$ goes to minus infinity and use monotone convergence we hence get the statement we need for general $f,h$. So from now on we assume that there is a constant $C$ such that $|f|\leq C, |h|\leq C$ everywhere.

Let $\eta \in \WR(\X)$ with $b_{\eta}>0$ and assume that all the supports of the measures $\eta(s)$ are contained in the compact set $K$. Cover $K$ by finitely many balls $B_1,B_2,\ldots,B_m$ with radii at most $s^2$. Now we partition $X$ (up to a set of measure zero) as follows.
Let
$$\MM=\{i,j,r : i,j \in \mathbb{Z} \textrm{ and } r \in \{1,2,\ldots,m\}\}.$$
For $s$ fixed and $i,j,r \in \MM$ we let
$$A_{i,j,r} = \left\{x \in X : (i-1)s^2 < f(x) \leq is^2,\, (j-1)s^2 < h(x) \leq js^2\right\} \cap B_r$$
and define
$$\mu_{i,j,r}=\mu|_{A_{i,j,r}}.$$
Let $s \in (0,b_{\eta}]$. If we apply Corollary \ref{split3} to this decomposition of $\mu$ we see that there are curves $\nu_k \in \WR(\X)$ such that $\eta = \sum_{k=1}^{\infty} \nu_k$ and for each $k$ there are $i_0,j_0,r_0,i_s,j_s,r_s$ such that $\nu_k(0) \leq \mu_{i_0,j_0,r_0}$ and $\nu_k(s) \leq \mu_{i_s,j_s,r_s}$. So on carriers for the measures $\nu_i(0)$ and $\nu_i(s)$ respectively the oscillation of both $f$ and $h$ are at most $s^2$. Furthermore the supports of all of the measures $\nu_i(0)$ and $\nu_i(s)$  has diameter at most $2s^2$ . Let $I=\{i:\nu_i \ne 0\}$. Now we get (using that $||\nu_i||$ and $||\eta||$ are constant)
\begin{align*}
&  \left| \int fh \, d\eta(s) - \int fh \, d\eta(0)\right| = \left| \sum_{i \in I} \left(\int fh \, d\nu_i(s) - \int fh \, d\nu_i(0)\right)\right|  \\
 &\leq  \left| \sum_{i \in I} 
 \int \left(f - \frac{1}{||\nu_i(s)||}\int f \, d\nu_i(s)\right)h d\nu_i(s)\right| \\
 &+ \left| \sum_{i \in I} 
  \frac{1}{||\nu_i(0)||}\left(\int f \, d \nu_i(s) - \int f \, d\nu_i(0)\right) \int h \, d\nu_i(0)\right|\\
  &+ \left| \sum_{i \in I} 
  \frac{1}{||\nu_i(0)||}\left(\int h \, d \nu_i(s) - \int h \, d\nu_i(0)\right) \int f \, d\nu_i(s)\right| \\
  &+ \left| \sum_{i \in I} 
 \int \left(h - \frac{1}{||\nu_i(0)||}\int h \, d\nu_i(0)\right)f \, d\nu_i(0)\right| \\
 &\leq    \sum_{i \in I} 
 \int s^2 |h| \, d\nu_i(s) +   \sum_{i \in I} 
  \int_0^s r_{F_f}(\nu_i(t)) \, dt \int |h| \, d\nu_i(0)\\
  &+   \sum_{i \in I} 
  \int_0^s r_{F_h}(\nu_i(t)) \, dt \int |f| \, d\nu_i(s) +   \sum_{i \in I} 
 \int s^2 |f| \, d\nu_i(0) \\
 &=     
 \int s^2 |h| \, d\eta(s) +  \sum_{i \in I} 
  \int_0^s \left(\frac{1}{||\nu_i(0)||} \int g_f \, d \nu_i(t) \right) \, dt \int |h| \, d\nu_i(0)\\
  &+  \sum_{i \in I} 
  \int_0^s \left(\frac{1}{||\nu_i(0)||} \int g_h \, d \nu_i(t) \, dt\right) \int |f| \, d\nu_i(s) +  
 \int s^2 |f| \, d\eta(0).
\end{align*}
Let $\vae>0$ and choose non-negative continuous functions $\tilde{g}_f$ and $\tilde{g}_h$  with common modulus of continuity $\psi$ on $K$ such that
$$\int_K |g_f - \tilde{g}_f| \, d\mu  \leq \vae,\quad \int_K |g_h - \tilde{g}_h| \, d\mu  \leq \vae.$$
Then we get
\begin{align*}
&  \limsup_{s \rightarrow 0^+} \frac{1}{s}\left| \int fh \, d\eta(s) - \int fh \, d\eta(0)\right|  \\
&\leq \limsup_{s \rightarrow 0^+} \frac{1}{s}\left(   
 \int s^2 |h| \, d\eta(s) +   \sum_{i \in I} s
  \int_0^s \left(\frac{1}{||\nu_i(0)||} \int g_f \, d \nu_i(t) \right) \, dt \int |h| \, d\nu_i(0)\right.\\
  &+\left.  \sum_{i \in I} 
  \int_0^s \left(\frac{1}{||\nu_i(0)||} \int g_h \, d \nu_i(t) \right) \, dt \int |f| \, d\nu_i(s) +  
 \int s^2 |f| \, d\eta(0)\right)\\
 &\leq \limsup_{s \rightarrow 0^+} \frac{1}{s} \sum_{i \in I} 
  \int_0^s \left( \int |g_f-\tilde{g}_f| d \nu_i(t) \right) \, dt \frac{1}{||\nu_i(0)||}\int |h| \, d\nu_i(0) \\
  &+ \limsup_{s \rightarrow 0^+} \frac{1}{s} \sum_{i \in I}\int_0^s \left(\frac{1}{||\nu_i(0)||}\int \tilde{g}_f \, d\nu_i(t)\right) \, dt \int |h| \, d\nu_i(0)+\\
   & \limsup_{s \rightarrow 0^+} \frac{1}{s} \sum_{i \in I} 
  \int_0^s \left( \int |g_h-\tilde{g}_h| \,  d \nu_i(t) \right)\, dt \frac{1}{||\nu_i(0)||}\int |f| \, d\nu_i(s) \\
  &+ \limsup_{s \rightarrow 0^+} \frac{1}{s} \sum_{i \in I}\int_0^s \left(\frac{1}{||\nu_i(0)||}\int \tilde{g}_h \, d\nu_i(t)\right) \, dt \int |f| \, d\nu_i(s).
 \end{align*}
From our assumptions we get that
 $$\limsup_{s \rightarrow 0^+} \frac{1}{s} \sum_{i \in I} 
  \int_0^s \left( \int |g_f-\tilde{g}_f| \, d \nu_i(t) \right) \, dt \frac{1}{||\nu_i(0)||}\int |h| \, d\nu_i(0) \leq C\vae$$
and
$$\limsup_{s \rightarrow 0^+} \frac{1}{s} \sum_{i \in I} 
  \int_0^s \left( \int |g_h-\tilde{g}_h| \, d \nu_i(t) \right) \, dt \frac{1}{||\nu_i(0)||}\int |f| \, d\nu_i(s) \leq C\vae.$$
Also, since $\cup_{t\in [0,s]}\supp(\nu_i(t))$ has diameter at most $2(s+s^2)$, we get
\begin{align*}
&\frac{1}{s} \sum_{i \in I}\int_0^s \left(\frac{1}{||\nu_i(0)||}\int \tilde{g}_h \, d\nu_i(t)\right) \, dt \int |f| \, d\nu_i(s)\\
&\leq \sum_{i \in I } \int \left| \frac{1}{\|\nu_i(0)\|}\int_0^s \tilde{g}_h d\nu_i(t) - \tilde{g}_h \right| |f|d\nu_i(s) + \sum_{i \in I} \int \tilde{g}_h|f|d\nu_i(s) \\
&\leq C\|\eta(s)\|\psi(2(s+s^2)) +C\vae + \int g_h |f|\, d\eta(s).
\end{align*}
Hence 
$$\limsup_{s \rightarrow 0}  \sum_{i \in I}\int_0^s \left(\frac{1}{||\nu_i(0)||}\int \tilde{g}_h \, d\nu_i(t)\right) \, dt \int |f| \, d\nu_i(s) \leq \int g_h |f| \,d\eta(0) + C\vae.$$
Similarly we get
$$\limsup_{s \rightarrow 0}  \sum_{i \in I}\int_0^s \left(\frac{1}{||\nu_i(0)||}\int \tilde{g}_f \, d\nu_i(t)\right) \, dt \int |h| \, d\nu_i(s) \leq \int g_f |h| \,d\eta(0) + C\vae.$$
Summing up we get
$$\limsup_{s \rightarrow 0^+} \frac{1}{s}\left| \int fh \, d\eta(s) - \int fh \, d\eta(0)\right| \leq 
\int (g_h |f|+g_f |h|) \,d\eta(0) +4C\vae.$$
And since $\vae>0$ is arbitrary we get the statement.
\end{proof}
\section{The space $\S^{1,p}(\X)$}\label{SOBOLEV}
We now define
$$\|F\|_{\S^{1,p}(\X)} : = \left(\|F\|_{\L^p(\X)}^p + \|r_F\|_{\L^p(\X)}^{p}\right)^{1/p},$$
$$\S^{1,p}(\X) = \{ F \in \L^p(\X) : \|F\|_{\S^{1,p}(\X)} < \infty\}.$$
It is easily verified that $\|\cdot\|_{\S^{1,p}(\X)}$ is a norm on $\S^{1,p}(\X)$.
\begin{remark}
In case $F \in \S^{1,p}(\X)$, then by assumption $r_F \in \L^p(\X)$, and hence $\breve{r}_F \in \L^p(\X)$ with the same norm. In particular $\breve{r}_F(\eta)$ is  finite for every $\eta \in \M$. Moreover if $\eta \in \WR(\X)$ then, since $\|\eta(s)\|$ is constant, $\sup_{s \in [0,b_{\eta}]} \breve{r}_F(\eta(s)) \leq \|r_F\|_{\L^p(\X)}/\|\eta(0)\|=k < \infty$, so $|F(\eta(s)) - F(\eta(t))| \leq k|s-t|$ for all $s,t \in [0,b_{\eta}]$, and hence $F(\eta(s))$ is Lipschitz continuous in $s$.
\end{remark}
\begin{theorem} \label{banach1}
$\S^{1,p}(\X)$ is a Banach space. Furthermore, if $F_n \rightarrow F$ in $\S^{1,p}(\X)$, then $r_{F_n} \rightarrow r_F$ in $\L^p(\X)$.
\end{theorem}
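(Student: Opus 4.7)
The plan is to piggyback on the already-proved completeness of $\L^p(\X)$. Given a Cauchy sequence $(F_n)$ in $\S^{1,p}(\X)$, both $(F_n)$ and $(r_{F_n})$ are Cauchy in $\L^p(\X)$, so by completeness $(F_n)$ has an $\L^p(\X)$-limit $F$. The pointwise estimate $|F_n(\eta)-F(\eta)|^p \|\eta\| \leq \|F_n-F\|_{\L^p(\X)}^p$ upgrades this to pointwise convergence $F_n(\eta)\to F(\eta)$ on all of $\X$.

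To continue I would pass to a subsequence (reindexed as $F_n$) with $\|F_{n+1}-F_n\|_{\S^{1,p}(\X)} < 2^{-n}$, so in particular $\|r_{F_{n+1}-F_n}\|_{\L^p(\X)} < 2^{-n}$. Telescoping together with the pointwise convergence gives $F - F_N = \sum_{k=N}^\infty (F_{k+1}-F_k)$ on $\X$. Applying the upper semicontinuous form of Theorem \ref{countadd} to this decomposition yields $\breve{r}_{F-F_N} \leq \breve{g}_N$ pointwise, where $g_N = \sum_{k\geq N}\breve{r}_{F_{k+1}-F_k}$. Combining subadditivity, monotonicity, and the insensitivity of $\|\cdot\|_{\L^p(\X)}$ to the upper semicontinuous regularization of a non-negative function produces
\[
\|r_{F-F_N}\|_{\L^p(\X)} = \|\breve{r}_{F-F_N}\|_{\L^p(\X)} \leq \|g_N\|_{\L^p(\X)} \leq \sum_{k=N}^{\infty} \|r_{F_{k+1}-F_k}\|_{\L^p(\X)} \leq 2^{1-N}.
\]

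Now Theorem \ref{ugbasic}(2) applied to $F = F_N + (F-F_N)$ and to $F_N = F + (F_N-F)$, combined with $r_{-H}=r_H$ from part (1), gives the pointwise inequalities $r_F \leq r_{F_N}+r_{F-F_N}$ and $r_{F_N}\leq r_F+r_{F-F_N}$, hence $|r_F - r_{F_N}|\leq r_{F-F_N}$. The first of these, taken in $\L^p(\X)$-norm, yields $\|r_F\|_{\L^p(\X)} \leq \|r_{F_N}\|_{\L^p(\X)} + 2^{1-N} < \infty$, placing $F$ in $\S^{1,p}(\X)$. Since both $\|F-F_N\|_{\L^p(\X)}$ and $\|r_{F-F_N}\|_{\L^p(\X)}$ tend to zero, the subsequence $F_N$ converges to $F$ in $\S^{1,p}(\X)$; by the standard fact that a Cauchy sequence with a convergent subsequence is convergent, the full sequence $F_n$ converges to $F$ in $\S^{1,p}(\X)$. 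This establishes completeness, and in particular $\|r_{F-F_n}\|_{\L^p(\X)}\to 0$ along the full sequence; combined with the pointwise bound $|r_F - r_{F_n}|\leq r_{F-F_n}$ and monotonicity of $\|\cdot\|_{\L^p(\X)}$ this yields $\|r_F-r_{F_n}\|_{\L^p(\X)}\to 0$, which is the second claim.

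The main delicate decision will be the choice of which form of Theorem \ref{countadd} to invoke: the $r^{\vae}$ version would force one to justify an exchange of the limit $\vae\downarrow 0$ with an infinite sum, which is not quite automatic, whereas the upper semicontinuous form combined with $\|\breve{H}\|_{\L^p(\X)}=\|H\|_{\L^p(\X)}$ for non-negative $H$ sidesteps this and reduces the rest of the argument to a routine telescoping completion in a Banach space.
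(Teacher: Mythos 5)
Your proposal is correct and follows essentially the same route as the paper: pass to a rapidly converging subsequence, identify the $\L^p(\X)$-limit $F$, telescope $F-F_N$, apply the upper-semicontinuous form of Theorem~\ref{countadd} together with $\|\breve{H}\|_{\L^p(\X)}=\|H\|_{\L^p(\X)}$ for non-negative $H$ to control $\|r_{F-F_N}\|_{\L^p(\X)}$, and use $|r_F-r_{F_N}|\le r_{F-F_N}$ from Theorem~\ref{ugbasic} to conclude. You are somewhat more explicit than the paper about upgrading the subsequence convergence to the full Cauchy sequence and about extracting the second claim $r_{F_n}\to r_F$, but these are just the standard details the paper leaves implicit.
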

\begin{proof}
Suppose $F_j \in \S^{1,p}(\X)$ is a Cauchy sequence. By passing to a subsequence we may assume that
$\|F_{j+1}-F_j\|_{\L^p (\X)} + \|r_{(F_{j+1}-F_j)}\|_{\L^p(\X)} \leq 2^{-j}.$ By definition
$$ \|F_j - F_i\|_{\L^p(\X)} \leq \|F_j-F_i\|_{\S^{1,p}(\X)}$$
and, since $|r_{F_j} - r_{F_i}| \leq r_{(F_j-F_i)}$ according to Theorem \ref{ugbasic},
$$ \|r_{F_j} - r_{F_i}\|_{L^p(\X)} \leq \|\breve{r}_{(F_j - F_i)}\|_{\L^p(\X)} \leq \|F_j-F_i\|_{\S^{1,p}(\X)}.$$
In particular $F_j$ is a Cauchy sequence in $\L^p(\X)$, and hence converges in this space to some $F \in \L^p(\X)$. 

For any $k \in \N$ we have 
$$F-F_k = \sum_{j=k}^{\infty} (F_{j+1}-F_j),$$
so according to Theorem \ref{countadd} we know that
$$r_{(F-F_k)} \leq \breve{g} \textrm{ where } g=\sum_{j=k}^{\infty} \breve{r}_{(F_{j+1}-F_j)}.$$
But then
$$\|r_{F} - r_{F_k}\|_{\L^p(\X)} \leq\|r_{(F-F_k)}\|_{\L^p(\X)} \leq \|\breve{g}\|_{\L^p(\X)} = \|g\|_{\L^p(\X)} \leq \sum_{j=k}^{\infty} \|\breve{r}_{(F_{j+1}-F_j)}\|_{\L^p(\X)} \leq 2^{1-k}.$$
If we add all this together we see that indeed $F_k$ converges to $F$ and $r_{F_k}$ converges to $r_F$ in $S^{1,p}(\X)$ as $k \rightarrow \infty$.
\end{proof}
\begin{theorem}
If $F,G \in \S^{1,p}(\X)$ then $F \vee G,F \wedge G \in \S^{1,p}(\X)$.
\end{theorem}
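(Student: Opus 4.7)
The plan is to check the two components of the $\S^{1,p}(\X)$-norm separately, with Lemma~\ref{lattice} handling the gradient part essentially for free.

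First I would verify that $F \vee G$ and $F \wedge G$ belong to $\L^p(\X)$. This is a pointwise estimate: $|F \vee G| \leq |F| \vee |G| \leq |F| + |G|$ and likewise for $F \wedge G$. Combining this with the monotonicity of $\|\cdot\|_{\L^p(\X)}$, the triangle inequality, and the fact that $\||F|\|_{\L^p(\X)} = \|F\|_{\L^p(\X)}$, I obtain
$$\|F \vee G\|_{\L^p(\X)}, \, \|F \wedge G\|_{\L^p(\X)} \leq \|F\|_{\L^p(\X)} + \|G\|_{\L^p(\X)} < \infty.$$

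For the gradient part, I would use Lemma~\ref{lattice}, which gives $\breve{r}_{F \vee G} \leq \breve{r}_F + \breve{r}_G$ and $\breve{r}_{F \wedge G} \leq \breve{r}_F + \breve{r}_G$. Since $r_H$ is a non-negative function with $r_H \leq \breve{r}_H$ (the constant curve witnesses this), and since for non-negative functions the earlier theorem gives $\|r_H\|_{\L^p(\X)} = \|\breve{r}_H\|_{\L^p(\X)}$, I can chain
$$\|r_{F \vee G}\|_{\L^p(\X)} \leq \|\breve{r}_{F \vee G}\|_{\L^p(\X)} \leq \|\breve{r}_F + \breve{r}_G\|_{\L^p(\X)} \leq \|\breve{r}_F\|_{\L^p(\X)} + \|\breve{r}_G\|_{\L^p(\X)} = \|r_F\|_{\L^p(\X)} + \|r_G\|_{\L^p(\X)},$$
which is finite by assumption, and exactly the same estimate for $F \wedge G$.

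Combining both estimates, $\|F \vee G\|_{\S^{1,p}(\X)}$ and $\|F \wedge G\|_{\S^{1,p}(\X)}$ are both finite, so $F \vee G, F \wedge G \in \S^{1,p}(\X)$. There is really no hard step here; the only point that needs a little care is the interplay between $r_H$ and $\breve{r}_H$ in norm, which is precisely the content of the earlier theorem ensuring that the $\L^p(\X)$-norm is preserved under upper semicontinuous regularization of non-negative functions.
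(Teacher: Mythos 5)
Your proof is correct and follows essentially the same route as the paper's: bound $|F \vee G|$ and $|F \wedge G|$ by $|F| + |G|$ for the $\L^p$ part, then invoke Lemma~\ref{lattice} for the gradient part. You spell out more explicitly the norm-preservation under usc regularization, which the paper leaves implicit, but the argument is the same.
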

\begin{proof}
Since $|F \vee G| \leq |F| \vee |G| \leq |F| + |G|$ and $|F \wedge G| \leq |F| \wedge |G| \leq |F|+|G|$ we see that $F \vee G,F\wedge G \in \L^p(\X)$. By Lemma \ref{lattice} we get that $\breve{r}_{F \vee G}, \breve{r}_{F \wedge G} \in \L^p(\X)$, and hence the theorem follows.
\end{proof}
\section{The spaces $S^{1,p}(\X)$ and $S^{1,p}(X)$}\label{Sobolev2}
We let
$$S^{1,p}(\X) = \{ F \in L^p(\X) : \|F\|_{\S^{1,p}(\X)} < \infty\},$$
and
$$S^{1,p}(X) =\{f \in L^p(X): F_f \in S^{1,p}(\X)\}.$$
In the latter case we as usual identify elements which are equal a.e., and we also define the norm on this space 
$$\|f\|_{S^{1,p}(X)} = \|F_f\|_{\S^{1,p}(\X)}.$$
\begin{remark}
There is an obvious $1-1$ correspondence, according to our previous results, between these two spaces. $F$ belongs to $S^{1,p}(\X)$ if and only if there is $f \in S^{1,p}(X)$ such that $F=F_f$, and then
$$\|f\|_{S^{1,p}(X)}=\|F_f\|_{\S^{1,p}(\X}) = \left( \|f\|_{L^p(X)}^p + \|g_f\|_{L^p(X)}^p\right)^{1/p}  .$$
\end{remark}
\begin{theorem} \label{banach2}
$S^{1,p}(\X)$ and $S^{1,p}(X)$ are Banach spaces. Furthermore if $f_n \rightarrow f$ in $S^{1,p}(X)$ then $g_{f_n} \rightarrow g_f$ in $L^p(X)$.
\end{theorem}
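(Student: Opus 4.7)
The plan is to reduce this to Theorem \ref{banach1} (completeness of $\S^{1,p}(\X)$) together with the identification results already established between functions on $X$ and functions on $\X$. The two key facts to invoke are that $L^p(\X)$ is a closed subspace of $\L^p(\X)$, and that $f \mapsto F_f$ is a bijective isometry between $S^{1,p}(X)$ and $S^{1,p}(\X)$.

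First I would show that $S^{1,p}(\X)$ is a Banach space. By definition
$$S^{1,p}(\X) = \S^{1,p}(\X) \cap L^p(\X),$$
where intersection is taken inside $\L^p(\X)$. Let $F_n$ be a Cauchy sequence in $S^{1,p}(\X)$. Since the inclusion $S^{1,p}(\X) \hookrightarrow \S^{1,p}(\X)$ is an isometry for the $\|\cdot\|_{\S^{1,p}(\X)}$-norm, and $\S^{1,p}(\X)$ is complete by Theorem \ref{banach1}, there is $F \in \S^{1,p}(\X)$ with $F_n \to F$ in $\S^{1,p}(\X)$. In particular $F_n \to F$ in $\L^p(\X)$. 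But each $F_n$ lies in $L^p(\X)$, which was noted to be closed in $\L^p(\X)$ (right after Theorem \ref{eqlpnorm}), so $F \in L^p(\X)$ and hence $F \in S^{1,p}(\X)$.

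Next I would transfer this to $S^{1,p}(X)$. By Lemma \ref{FFf} the map $f \mapsto F_f$ is a bijection from $L^1_{\rm loc}(X)$ onto the set of $F$ satisfying (\ref{Fadd1}) and (\ref{Fadd2}), and by definition of $\|f\|_{S^{1,p}(X)}$ it is an isometric bijection from $S^{1,p}(X)$ to $S^{1,p}(\X)$. Completeness of the former then follows immediately from completeness of the latter.

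Finally, for the convergence of gradients, suppose $f_n \to f$ in $S^{1,p}(X)$. By the isometry above this means $F_{f_n} \to F_f$ in $\S^{1,p}(\X)$, so by the second conclusion of Theorem \ref{banach1} we get $r_{F_{f_n}} \to r_{F_f}$ in $\L^p(\X)$. Since each $f_n$ (and $f$) lies in $S^{1,p}(X)$, the functions $r_{F_{f_n}}$ and $r_{F_f}$ are of the form $F_{g_{f_n}}$ and $F_{g_f}$ respectively. Using linearity of $f \mapsto F_f$ and Theorem \ref{eqlpnorm} we conclude
$$\|g_{f_n}-g_f\|_{L^p(X)} = \|F_{g_{f_n}-g_f}\|_{\L^p(\X)} = \|F_{g_{f_n}} - F_{g_f}\|_{\L^p(\X)} \longrightarrow 0.$$
I do not anticipate a serious obstacle here; the proof is essentially an assembly of earlier results, and the only point that needs a little care is checking that $L^p(\X)$ is indeed $\L^p(\X)$-closed (so that the limit of the Cauchy sequence inherits the additivity properties (\ref{Fadd1}), (\ref{Fadd2})), but this was already recorded in the paragraph following Theorem \ref{eqlpnorm}.
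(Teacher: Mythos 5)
Your proof is correct and follows essentially the same approach as the paper: reduce to completeness of $\S^{1,p}(\X)$ via Theorem \ref{banach1}, and use that $L^p(\X)$ is closed in $\L^p(\X)$ to conclude the limit stays in $S^{1,p}(\X)$; the gradient convergence then transfers through the isometry $g \mapsto F_g$. The paper's own proof is terser and leaves the gradient part implicit, but the reasoning is the same.
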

\begin{proof}
It is enough, according to Theorem \ref{banach1}, to note that in case $F_j \in L^p(\X) \cap \S^{1,p}(\X)$ and $F_j \rightarrow F$ in $\S^{1,p}(\X)$ then it is clear that also $F \in L^p(\X)$ and hence the theorem follows.
\end{proof}

\begin{theorem}\label{latticeS}
Suppose $f \in S^{1,p}(X)$, then $f^+,f^-$ and $|f|$ also belongs to $S^{1,p}(X)$.
\end{theorem}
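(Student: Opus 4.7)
The plan is to reduce this to the truncation identity in Theorem \ref{lattice1} together with the linearity/subadditivity of upper gradients in Proposition \ref{ugfb1}. Since $|f^+|, |f^-|, ||f|| \leq |f|$, the three functions immediately lie in $L^p(X) \subset L^1_{\rm loc}(X)$, so the only task is to produce upper gradients in $L^p(X)$ for each of them.

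First I will handle $f^+ = f \vee 0$. By assumption $f$ has the upper gradient $g_f \in L^p(X) \subset L^1_{\rm loc}(X)$, so Theorem \ref{lattice1} applies with $k=f$ and $c=0$ to give
\[
g_{f^+} = g_f \chi_{\{f > 0\}},
\]
which is dominated pointwise by $g_f$ and hence belongs to $L^p(X)$. This shows $f^+ \in S^{1,p}(X)$.

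Next I will handle $f^- = -(f \wedge 0)$. Another application of Theorem \ref{lattice1} (this time for $k \wedge c$) yields $g_{f \wedge 0} = g_f \chi_{\{f < 0\}}$. Combining with Proposition \ref{ugfb1}(2) applied to $a=-1$, I get
\[
g_{f^-} = g_{-(f \wedge 0)} = g_{f \wedge 0} = g_f \chi_{\{f < 0\}},
\]
again in $L^p(X)$, so $f^- \in S^{1,p}(X)$.

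Finally, writing $|f| = f^+ + f^-$ and invoking Proposition \ref{ugfb1}(1) gives
\[
g_{|f|} \leq g_{f^+} + g_{f^-} = g_f\bigl(\chi_{\{f > 0\}} + \chi_{\{f < 0\}}\bigr) \leq g_f,
\]
so $g_{|f|} \in L^p(X)$ and hence $|f| \in S^{1,p}(X)$. There is no real obstacle here; all the work has been done in Theorem \ref{lattice1} (the nontrivial chain-rule/truncation identity) and in the basic algebraic inequalities of Proposition \ref{ugfb1}, and the argument is a brief assembly of those facts.
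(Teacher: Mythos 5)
Your proof is correct and follows exactly the route the paper intends: the paper's own proof is the one-liner ``This is an immediate consequence of Theorem \ref{lattice1}.'' You have simply spelled out the three applications of the truncation identity (plus the homogeneity and subadditivity of $g$) that make the implication immediate.
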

\begin{proof}
This is an immediate consequence of Theorem \ref{lattice1}.
\end{proof}
\section{$S^{1,p}(X)$ for open subsets $X$ of $\R^n$} \label{RN}
Let $X \subset \R^n$ be open, $d_X$ denote the Euclidean metric and let $\mu$ be Lebesgue measure on $X$. 

If $f : X \rightarrow \R$ then we denote the distributional gradient of $f$ by $\nabla f$. In case both $f$ and $|\nabla f|$ belongs to $L^p(X)$ then we say that $f$ belongs to the classical Sobolev space $H^{1,p}(X)$.
We give $H^{1,p}(X)$ the norm 
$$\|f\|_{H^{1,p}(X)} = (\|f\|_{L^p(X)}^p + \| |\nabla f|\|_{L^p(X)}^p)^{1/p}.$$
Our aim is to show that in this context the spaces $S^{1,p}(X)$ and $H^{1,p}(X)$ coincide.
\begin{lemma}\label{gnabla}
If $f \in C^2(X)$ then $g_f=|\nabla f|$.
\end{lemma}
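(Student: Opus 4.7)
I would prove the identity as two inequalities, $g_f \leq |\nabla f|$ and $g_f \geq |\nabla f|$, both valid $\mu$-a.e.\ (and hence pointwise off a null set, since $\nabla f$ is continuous).

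\textbf{The upper bound $g_f \leq |\nabla f|$.} By Proposition \ref{fundprop} it suffices to verify that $g = |\nabla f|$ is admissible, i.e.
\[
\left|\int f\,d\eta(s) - \int f\,d\eta(0)\right| \leq \int_0^s \left(\int |\nabla f|\,d\eta(t)\right) dt \quad \text{for all } \eta \in \WR(\X),\; s \in [0,b_\eta].
\]
The map $t \mapsto \int |\nabla f|\,d\eta(t)$ is continuous by Theorem \ref{contcurves}, so Lemma \ref{fundth} reduces the task to bounding the upper Dini derivative of $t \mapsto \int f\,d\eta(t)$ at every $t$ by $\int |\nabla f|\,d\eta(t)$. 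Fix $t=0$, let $K = \bigcup_{t}\supp(\eta(t))$ (compact by Theorem \ref{rectcont}), pick $\vae > 0$ and, for small $s > 0$, cover $K$ by finitely many disjoint measurable pieces of diameter at most $\vae s$. Applying Corollary \ref{split3} with $t_0=0,\, t_1=s$ yields curves $\nu_j \in \WR(\X)$ such that $\eta(t) = \sum_j \nu_j(t)$ and, for each $j$, $\supp(\nu_j(0)) \cup \supp(\nu_j(s))$ lies in some ball $B(x_j, (1+2\vae)s)$ (using Theorem \ref{rectcont}(2) together with the diameter bound on $\supp(\nu_j(0))$). Taylor expanding $f$ at $x_j$, using mass conservation $\|\nu_j(s)\| = \|\nu_j(0)\|$, the constant term $f(x_j)$ cancels, and one obtains
\[
\left|\int f\,d\nu_j(s) - \int f\,d\nu_j(0)\right| \leq |\nabla f(x_j)|\,(1+2\vae)s\,\|\nu_j(0)\| + C\,s^2 \|\nu_j(0)\|,
\]
where $C$ bounds $|f''|$ on a neighborhood of $K$ and the first-moment bound $|\int (x-x_j)\,d(\nu_j(s)-\nu_j(0))| \leq (1+2\vae)s\,\|\nu_j(0)\|$ was used. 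Summing over $j$, invoking uniform continuity of $\nabla f$ near $K$ to replace $|\nabla f(x_j)|\|\nu_j(0)\|$ by $\int |\nabla f|\,d\nu_j(0) + o(1)\|\nu_j(0)\|$, then dividing by $s$, letting $s \to 0^+$ and finally $\vae \to 0$, gives $\limsup_{s\to 0^+} s^{-1}\big|\int f\,d\eta(s) - \int f\,d\eta(0)\big| \leq \int |\nabla f|\,d\eta(0)$, as required.

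\textbf{The lower bound $g_f \geq |\nabla f|$.} Here I exploit the translation family from the example at the end of Section \ref{METRIC}. Fix a unit vector $\bar{e} \in \R^n$, a point $x_0 \in X$, and $r, T > 0$ small enough that $\overline{B(x_0, r+T)} \subset X$. By Theorem \ref{Hmap2} applied to $H_t(x) = x - t\bar{e}$, the assignment $\eta(t) := \chi_{B(x_0,r)}(\cdot + t\bar{e})\,\mu$ defines an element of $\WR(\X)$ with constant mass. A change of variables gives $\int f\,d\eta(t) = \int_{B(x_0,r)} f(y - t\bar{e})\,dy$, so $\frac{d}{dt}\big|_{t=0}\int f\,d\eta(t) = -\int_{B(x_0,r)} \bar{e}\cdot\nabla f\,d\mu$. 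By the definition of $r_{F_f}$ and the identity $r_{F_f} = F_{g_f}$,
\[
\frac{1}{\mu(B(x_0,r))}\int_{B(x_0,r)} g_f\,d\mu \;\geq\; \frac{1}{\mu(B(x_0,r))}\left|\int_{B(x_0,r)} \bar{e}\cdot\nabla f\,d\mu\right|.
\]
Letting $r \to 0^+$, Lebesgue differentiation applied to $g_f \in L^1_{\rm loc}(X)$ and continuity of $\nabla f$ give $g_f(x_0) \geq |\bar{e}\cdot\nabla f(x_0)|$ for $\mu$-a.e.\ $x_0$. Choosing a countable dense set of unit vectors $\{\bar{e}_k\}$ and taking the supremum yields $g_f \geq |\nabla f|$ $\mu$-a.e.

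\textbf{Main obstacle.} The delicate step is the upper bound: one must arrange the decomposition so that the first moment $\int(x-x_j)\,d(\nu_j(s)-\nu_j(0))$ actually captures the $O(s)$ cancellation (rather than the crude $2(1+2\vae)s$ one would get without using mass conservation along the curves $\nu_j$) and so that the Taylor remainder is genuinely $o(s)$ after summation. The mass conservation $\|\nu_j(s)\| = \|\nu_j(0)\|$ on rectifiable curves in $\WR(\X)$ (Theorem \ref{rectcont}) is precisely what eliminates the constant term in the Taylor expansion; without it, the argument would only recover the global Lipschitz bound of Proposition \ref{lipgrad} and not the pointwise bound by $|\nabla f|$.
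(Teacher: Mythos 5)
Your argument is correct and proceeds along essentially the same lines as the paper: two inequalities proven via (i) a Taylor expansion over a fine decomposition of the curve into small pieces, using the split lemmas together with mass conservation from Theorem~\ref{rectcont}, and (ii) translation curves built from Theorem~\ref{Hmap2}. The minor differences are these. For $g_f\leq|\nabla f|$, the paper partitions $\eta(0)$ into pieces of diameter at most $s^2$ and applies Proposition~\ref{split2} (constraining only the measures at $t=0$, with Theorem~\ref{rectcont}(2) then automatically bounding the supports at time $s$, exactly as you also observe), whereas you partition into pieces of diameter $\vae s$ via Corollary~\ref{split3} and take a final limit $\vae\to 0$; the effect is the same, and your additional constraint at $t_1=s$ is in fact redundant. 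For $|\nabla f|\leq g_f$, the paper exploits that $f(\cdot+s\overline{e})-f(\cdot)$ has constant sign on a small ball around a point where $\partial_{\overline{e}}f\neq 0$, allowing it to pull the absolute value outside the integral before taking the limit and thereby obtain $\int_{B(x,r)}|\partial_{\overline{e}}f|\,d\mu\leq\int_{B(x,r)}g_f\,d\mu$ for all small $r$, rather than your weaker $\left|\int_{B(x,r)}\partial_{\overline{e}}f\,d\mu\right|\leq\int_{B(x,r)}g_f\,d\mu$; your version still suffices once you invoke Lebesgue differentiation on $g_f$ and the continuity of $\nabla f$, so both routes are valid. A small advantage of your ordering (upper bound first) is that Proposition~\ref{fundprop} then immediately gives $r_{F_f}\in\L^1_{\rm loc}(\X)$, so the identity $r_{F_f}=F_{g_f}$ is already justified before you use it in the lower-bound step.
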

\begin{proof}
We start with the inequality $|\nabla f| \leq g_f$. To prove this it is necessary and sufficient to prove that for any unit vector $\overline{e}$ we have that the directional derivative $\partial_{\overline{e}}f$ satisfies $|\partial_{\overline{e}}f| \leq g_f$ pointwise a.e.

Now suppose $B(x,r) \subset \subset X$. Then we get
$$ \int_{B(x,r)} |\partial_{\overline{e}}f(y)| \, d\mu(y) = \lim_{s \rightarrow 0^+}   \int_{B(x,r)} \left| \frac{f(y+s\overline{e}) - f(y)}{s}\right| \, d \mu(y).$$

In case $\partial_{\overline{e}}f(x) =0$ then there is nothing to prove. On the other hand, in case it is non-zero we may choose $\vae,\delta >0$ so small that $f(\cdot+s\overline{e})-f(\cdot)$ does not change sign in $B(x,\delta)$ for $s \in [0,\vae]$ say. Then we get for any $ r \leq \delta$
\begin{align*}
& \lim_{s \rightarrow 0^+}  \int_{B(x,r)} \left| \frac{f(y+s\overline{e}) - f(y)}{s}\right| \, d \mu(y) \\
&= \lim_{s \rightarrow 0^+} \left| \int_{B(x,r)}  \frac{f(y+s\overline{e}) - f(y)}{s} \, d \mu(y)\right| \\
&= \lim_{s \rightarrow 0^+} \left|\frac{1}{s} \left(\int_{B(x+s\overline{e},r)} f \, d\mu - \int_{B(x,r)} f \, d\mu\right) \right| \leq  \int_{B(x,r)} g_f \, d\mu.
\end{align*}
Hence for every $x \in X$ there is $\delta >0$ such that for every $r \leq \delta$
$$ \int_{B(x,r)} |\partial_{\overline{e}}f(x)| \, d\mu(x) \leq \int_{B(x,r)} g_f \, d\mu,$$
so the stated inequality holds.

To prove the opposite inequality suppose $\eta \in \WR(\X)$. Let $s>0$. We may cover $X$ by a countable disjoint family of Borel sets $A_j$, $j=1,2,\ldots$ such that each has diameter at most $s^2$ and such that the oscillation of $f$ over such a set is at most $s^2$ (similarly to the construction made in the proof of Proposition \ref{ugfb2}). If we define $\eta_j(0)= \eta(0)|_{A_j}$ we may according to Proposition \ref{split2} get a decomposition of $\eta(t)=\sum_{j=1}^{\infty} \eta_j(t)$ valid for $t \in [0,b_{\eta}]$, where each $\eta_j \in \WR(\M)$. Now we get if we fix $x_j \in A_j$ (using that $|x-x_j|\leq s^2$ if $x \in {\rm supp}(\eta_j(0))$, and $|x-x_j| \leq s+s^2$ if $x \in {\rm supp}(\eta_j(s))$)
\begin{align*}
& \left| \int f \, d\eta(s)-\int f \, d\eta(0) \right| =  \left|\sum_{j=1}^{\infty} \left( \int f \, d\eta_j(s)-\int f \, d\eta_j(0) \right)\right|  \\
&\leq \left|\sum_{j=1}^{\infty}  \int (f(\cdot)-f(x_j)) \, d\eta_j(s)\right| + \left|\sum_{j=1}^{\infty}  \int (f(\cdot)-f(x_j)) \, d\eta_j(0)\right| \\
&\leq  \left|\sum_{j=1}^{\infty}  \int \nabla f \cdot (x_j-\cdot) \, d\eta_j(s)\right| + o(s) + s^2  \\
&\leq \left(\sum_{j=1}^{\infty}  \int |\nabla f| \, d\eta_j(s)\right)s + o(s) + s^2 =   \left(\int |\nabla f|  \, d\eta(s)\right)s + o(s)+ s^2.
\end{align*}
Since this last expression is not dependent on the particular decomposition and  
$$\lim_{s \rightarrow 0} \frac{1}{s}\left(\left(\int |\nabla f|  \, d\eta(s)\right)s + o(s)+ s^2\right) =  \int |\nabla f| \, d \eta(0)$$
we see that we also have $g_f \leq |\nabla f|$ according to Proposition \ref{fundprop}.
\end{proof}

\begin{theorem}
Assume that $1 \leq p<\infty$. Then $S^{1,p}(X)=H^{1,p}(X)$ and the norms are the same. Furthermore $g_f =|\nabla f|$ for every $f \in H^{1,p}(X).$
\end{theorem}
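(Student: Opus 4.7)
The plan is to prove $S^{1,p}(X)=H^{1,p}(X)$ with equal norms and $g_f=|\nabla f|$ by two inclusions. The inclusion $H^{1,p}(X)\subseteq S^{1,p}(X)$ comes from extending Lemma \ref{gnabla} to all of $H^{1,p}(X)$ by smooth approximation, and $S^{1,p}(X)\subseteq H^{1,p}(X)$ comes from testing the defining upper gradient inequality for $g_f$ against the translation curves $\eta(t)=\phi(\cdot+t\overline{e})\mu$ from the Example at the end of Section \ref{METRIC}.

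For the first inclusion, given $f\in H^{1,p}(X)$, I would invoke the Meyers--Serrin theorem to choose $f_n\in C^{\infty}(X)\cap H^{1,p}(X)$ with $f_n\to f$ in $H^{1,p}(X)$. Lemma \ref{gnabla} gives $g_{f_n}=|\nabla f_n|$, hence $\|f_n\|_{S^{1,p}(X)}=\|f_n\|_{H^{1,p}(X)}$, and by subadditivity of $g$ (Proposition \ref{ugfb1}) one has $g_{f_n-f_m}\leq g_{f_n}+g_{f_m}=|\nabla f_n|+|\nabla f_m|$, so $\{f_n\}$ is Cauchy in $S^{1,p}(X)$. By Theorem \ref{banach2}, $f_n$ converges in $S^{1,p}(X)$ to an element whose $L^p$-representative is $f$, and $g_{f_n}\to g_f$ in $L^p(X)$; since $|\nabla f_n|\to|\nabla f|$ in $L^p(X)$ as well, we conclude $g_f=|\nabla f|$ a.e.\ and $\|f\|_{S^{1,p}(X)}=\|f\|_{H^{1,p}(X)}$.

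For the reverse inclusion, let $f\in S^{1,p}(X)$, fix a unit vector $\overline{e}\in\R^n$, and take $\phi\in C_c^{\infty}(X)$ with $0\leq\phi\leq 1$ and $\bigcup_{t\in[0,s_0]}(\supp\phi - t\overline{e})$ compactly contained in $X$ for some $s_0>0$. The Example at the end of Section \ref{METRIC} gives $\eta(t):=\phi(\cdot+t\overline{e})\mu\in\WR(\X)$ on $[0,s_0]$, and the upper gradient inequality for $g_f$ applied to $\eta$ reads
\[
\left|\int f(x)\phi(x+s\overline{e})\,dx-\int f(x)\phi(x)\,dx\right|\leq\int_0^s\!\int g_f(x)\phi(x+t\overline{e})\,dx\,dt
\]
for $s\in(0,s_0]$. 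Dividing by $s$ and letting $s\to 0^+$ (uniform convergence of the difference quotient of $\phi$ on the left, continuity of translation in $L^1$ on the right) produces
\[
\left|\int(\partial_{\overline{e}}f)\,\phi\,dx\right|\leq\int g_f\,\phi\,dx
\]
understood distributionally. Scaling removes the constraint $\phi\leq 1$, and the resulting inequality for all non-negative $\phi\in C_c^{\infty}(X)$ means $\partial_{\overline{e}}f$ is a signed Radon measure with total variation bounded by $g_f\mu$, hence represented by an $L^p$ function with $|\partial_{\overline{e}}f|\leq g_f$ a.e. Applying this to the $n$ coordinate directions gives $f\in H^{1,p}(X)$; combined with the first inclusion this forces $g_f=|\nabla f|$ a.e.

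The main obstacle is the last step — extracting a pointwise a.e.\ bound on the distributional derivative from a functional inequality that a priori holds only for test functions $\phi\in[0,1]$ with translates staying inside $X$. One must check this class is rich enough that the inequality extends to all of $C_c^{\infty}(X)$ (by scaling down and a cut-off), and then invoke the standard identification of an order-zero distribution dominated by a positive Radon measure with a Radon measure of no larger total variation. The remaining ingredients are direct applications of Lemma \ref{gnabla}, Theorem \ref{banach2}, Proposition \ref{ugfb1}, and the Example from Section \ref{METRIC}.
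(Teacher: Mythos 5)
Your overall two-step strategy mirrors the paper's proof exactly: smooth approximation via Meyers--Serrin together with Lemma~\ref{gnabla} and Theorem~\ref{banach2} for $H^{1,p}(X)\subseteq S^{1,p}(X)$, and the translation curves $\eta(t)=\phi(\cdot+t\overline{e})\mu$ together with the interpretation of $\partial_{\overline{e}}f$ as an order-zero distribution for the reverse inclusion. The minor cosmetic difference (your use of $C_c^\infty$ test functions versus the paper's Lipschitz compactly supported $\phi$) does not change the substance.

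There is, however, a genuine slip in your Cauchy argument for the first inclusion. You write that subadditivity gives $g_{f_n-f_m}\leq g_{f_n}+g_{f_m}=|\nabla f_n|+|\nabla f_m|$ and ``so $\{f_n\}$ is Cauchy in $S^{1,p}(X)$'', but the bound $|\nabla f_n|+|\nabla f_m|$ does not tend to zero, so this chain proves nothing. The correct (and simpler) observation is that $f_n-f_m\in C^\infty(X)$, so Lemma~\ref{gnabla} applies directly to it and yields $g_{f_n-f_m}=|\nabla(f_n-f_m)|=|\nabla f_n-\nabla f_m|$, whence $\|f_n-f_m\|_{S^{1,p}(X)}=\|f_n-f_m\|_{H^{1,p}(X)}\to 0$. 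This is precisely what the paper does. With that one-line repair your argument is complete and identical in approach to the paper's.
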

\begin{proof}
Assume that $f \in H^{1,p}(X)$, and choose a sequence $f_n \in C^{\infty}(X)$ such that $f_n$ converges to $f$ in $H^{1,p}(X)$. Then from Lemma \ref{gnabla} we get
$$\|f_n-f_m\|_{S^{1,p}(X)}=\|f_n-f_m\|_{H^{1,p}(X)}.$$
Hence we see that $f_n$ is a Cauchy sequence in $S^{1,p}(X)$, and since it converges to $f$ in $L^p(X)$ it follows that $f \in S^{1,p}(X)$.  

Furthermore we get for any $\eta \in \WR(\M)$ and $n \in \N$
$$\left| \int f_n  \, d\eta(s) - \int f_n \, d \eta(0)\right| \leq \int_0^s \int |\nabla f_n|\, d\eta(t) \, dt.$$
If we let $n \rightarrow \infty$ we see that
$$\left| \int f \, d\eta(s) - \int f \, d \eta(0)\right| \leq \int_0^s \int |\nabla f| \, d\eta(t) \, dt.$$
Hence $g_f \leq |\nabla f|$.

Conversely, suppose $f \in S^{1,p}(X)$, $\phi$ is Lipschitz continuous with compact support in $X$ and $0 \leq \phi \leq 1$. Then for a unit vector $\overline{e}$ we know that $\eta(t)=\phi(\cdot+t\overline{e}) \mu$  belongs to $\WR(\M)$. Hence
\begin{align*}
&\left|\int f \partial_{\overline{e}} \phi \, d\mu \right| = \lim_{s \rightarrow 0} \left| \int f \frac{\phi(\cdot+s\overline{e})-\phi}{s} \, d\mu \right|\\
&= \lim_{s \rightarrow 0} \left| \frac{1}{s}\left( \int f \, d\eta(s) - \int f \, d\eta(0)\right) \right| \leq \int g_f \, d\eta(0) &\\& =\int g_f \phi \, d\mu.
\end{align*}
However, if $\phi=c_1 \phi_1+c_2\phi_2$ where $\phi_i$ are Lipschitz continuous with compact support and $0 \leq \phi_i \leq 1$ and $c_1,c_2 \in \R$, then
\begin{align*}
&\left|\int f \partial_{\overline{e}} \phi \, d\mu \right|  \\
&\leq|c_1|\left|\int f \partial_{\overline{e}} \phi_1 \, d\mu \right| + 
|c_2|\left|\int f \partial_{\overline{e}} \phi_2 \, d\mu \right| \\
&\leq|c_1|\int g_f \phi_1 \, d\mu + |c_2|\int g_f \phi_2 \, d\mu.
\end{align*}
In particular this implies that we have, for all $\phi$ which are Lipschitz continuous with compact support in $X$,
$$\left|\int f \partial_{\overline{e}} \phi \, d\mu \right| \leq \int g_f |\phi| \, d\mu.$$
Therefore we see that the distribution $\partial_{\overline{e}} f$ has order zero, it is absolutely continuous with respect to $\mu$, and it has a Radon-Nikodym derivative whose absolute value is dominated by $g_f$. Hence $\partial_{\overline{e}} f$ has a representative which belongs to $L^p(X)$ for each $\overline{e}$, and also this representative satisfies 
$|\partial_{\overline{e}} f| \leq g_f$ $\mu$-a.e. Hence $|\nabla f| \leq g_f$ and the proof is done.
\end{proof}
\section{Some final remarks}
In this section we wish to make some remarks concerning certain choices and open questions related to this article.\smallskip\newline
{\bf Choice of space $\X$ and metric $d_{\M}$:}\smallskip\newline
It is not self-evident that the choice of metric and space $\X$ are optimal for this type of construction. For instance one could have considered instead of the bound $d\nu/d\mu \leq 1$ perhaps that we should only have a bounded Radon-Nikodym derivative with respect to $\mu$. We wanted also to have a theory where the upper gradients did not depend on the integrability exponent $p$ (indeed the definition of $r_F$ makes no assumption about integrability). Otherwise one could perhaps consider spaces which depends on $p$, and perhaps also relax the condition to have compact support of the mesures (e.g. simply assuming that the Radon-Nikodym derivative lies in $L^q(X)$ where $q$ is the dual exponent). So this is one possible area that could be worth investigating.\smallskip\newline
{\bf Relation to the Wasserstein metric:}\smallskip\newline 
It would in many respects be natural to look at  $$\widehat{\X} =\{\eta/\|\eta\|: \eta \in \X\}$$
rather than $\X$ itself, in particular considering the formula for $F_f$. Then $\widehat{\X}$ is a space of probability measures, and one could introduce a metric on this set. Let us for an element $\eta \in \X$ define $\widehat{\eta}= \frac{1}{\|\eta\|}\eta \in \widehat{\X}$.
We recall that the Wasserstein $1$-metric $W_1$ can be defined as follows:
$$W_1(\widehat{\eta},\widehat{\nu})= \sup \{\int f \, d(\widehat{\eta} - \widehat{\nu}) : {\rm Lip}(f) \leq 1\}.$$
This is in some sense the classical mass transport metric, and
our metric will be a type of mass transport metric on $\X$, but with the slight difference that in general it allows for not just relocation of mass but also controlled change in total mass. 

It is  clear from Proposition \ref{lipgrad} that along a curve $\eta \in \RR(\X)$ we have
$d_{\M}(\eta(s),\eta(t)) = |s-t| \geq W_1(\widehat{\eta}(s),\widehat{\eta}(t)).$

A couple of properties of $d_{\M}$ that we use extensively is first that in case $\eta$ and $\nu$ are close in the metric $d_{\M}$ then so are their total mass (in $\widehat{\X}$ of-course all measures are probability measures, but we need control of the size compared to $\mu$ for our construction).
The point is that if $\|\eta\|=\|\nu\|$ then
$$\frac{1}{\|\eta\|} \int f \, d\eta - \frac{1}{\|\nu\|} \int f \, d\nu = \frac{1}{\|\eta\|} \left(\int f \, d\eta - \int f \, d\nu\right).$$
It may be worthwhile to note that if we for instance work with $X=\R$ and $\mu$ Lebesgue measure, and we were to use a metric such that $\eta(t) = \mu|_{[0,1+t]}$ belonged to $\WR(\X)$, then for any continuous function $f$ we would have
$$\lim_{s \rightarrow 0^+} \left|\frac{1}{s}\left(\int f \, d\eta(s) - \int f \, d\eta(0) \right)\right| = f(1).$$
This sort of phenomenon is obviously something we need to avoid, and hence some kind of control of the total mass of $\eta$ compared to $\mu$ seems necessary.
 
Furthermore the decomposition results such as that in Proposition \ref{split2} was also crucial to us.
Let us here compare the situation with $W_1$ by studying some curves on the real line. So let $X=\R$ with Lebesgue measure $\mu$. If we start by studying $\eta(t)=\widehat{\eta}(t) =\mu|_{[t,1+t]}$, then it is easy to see that indeed 
$$d_{M}(\eta(t),\eta(0)) =W_1(\widehat{\eta} (t),\widehat{\eta} (0))=t.$$
If we instead look at $\nu(t)=\widehat{\nu}(t)= \mu|_{[0,1/2]}+\mu_{|1/2+t,1+t]}$, then 
$$d_{\M}(\eta(t),\eta(0))=t,$$
but
$$W_1(\widehat{\eta}(s),\widehat{\eta}(0))= t/2.$$
It certainly would be very interesting to investigate if it is possible to develop this theory in some similar way on $\widehat{\X}$ instead (apart form the obvious way by identifying $\eta$ with $\widehat{\eta}$ and lifting all the structure to $\widehat{\X}$), and see which metrics one could use.
In particular considering that the Wasserstein metric comes up (but for very different reasons) in certain recent developments in connection with analysis in metric measure spaces, such as for instance in  \cite{AGS}.   
Possibly there is a simpler description of the metric $d_{\M}$ (or some similar metric for which the above type of construction also work), which could have been easier to handle than the hands-on definition that was used in this article.\smallskip\newline
{\bf Future developments:}\smallskip\newline
The first challenge that needs to be investigated for these spaces seems to be in which situations there are plenty of curves in $\X$ so that a reasonable theory can be expected. From the results in this paper it is more or less clear that we do have plenty of curves in the following situations:
\begin{itemize}
\item weighted $\R^n$ for weight functions which locally are bounded from below by some positive constant,
\item weighted $\R^n$ for continuous weight functions,
\item Riemannian manifolds.
\end{itemize} 
However even for more difficult weight functions on $\R^n$ it is not self-evident how many curves exists.

Another obvious challenge is to see how these spaces are related to other types of Sobolev spaces such as the Newtonian ones in other settings than merely $\R^n$.

If the spaces seems to be of sufficient interest it is then also possible to look at finer properties of functions in them, Poincar\'e{}e inequalities and to develop potential theory in this setting.   

Finally it would be interesting to develop the theory in a point-free way, axiomatising the set $\X$ in a suitable manner.

\end{document}